\tikzset{
  LabelStyle/.style = { rectangle, rounded corners, draw,
                        minimum width = 1em, fill = yellow!50,
                        text = red, font = \bfseries },
  VertexStyle/.append style = { inner sep=5pt,
                                font = \Large\bfseries},
  EdgeStyle/.append style = { bend left=10} }
\tikzset{join/.code=\tikzset{after node path={%
\ifx\tikzchainprevious\pgfutil@empty\else(\tikzchainprevious)%
edge[every join]#1(\tikzchaincurrent)\fi}}}
\tikzset{>=stealth',every on chain/.append style={join},
         every join/.style={->}}
\tikzstyle{labeled}=[execute at begin node=$\scriptstyle,
\newtheorem{theorem}{Theorem}[section]
\newtheorem{lemma}[theorem]{Lemma}
\newtheorem{problem}[theorem]{Problem}
\newtheorem{proposition}[theorem]{Proposition}
\newtheorem{corollary}[theorem]{Corollary}
\newtheorem{definition}{Definition}
\newtheorem{remark}{Remark}
\def\Z{{\mathbb Z}}
\def\C{{\mathbb C}}
\def\k{{\Bbb k}}
\def\A{\widehat{A}}
\def\la{\lambda}
\def\cal{\mathcal}
\def\Hom{\operatorname{Hom}}
\def\la{\langle}
\def\ra{\rangle}
\def\b{\beta}
\def\P{\mathfrak{P}}
\def\B{\mathcal{B}}
\def\s{\sigma}
\def\G{\Gamma}
\def\D{\mathcal{D}}
\def\Be{\mathcal Be}
\def\Sig{\Sigma}
\title{On the 2-linearity of the free group}
\author{Anthony M. Licata}
\address{Mathematical Sciences Institute; Australian National University; Canberra, Australia}
\email{amlicata@gmail.com}
\begin{document}
\maketitle
\section{Introduction}
Let $\G$ be a group, and $\D$ a category.  A weak action of $\G$ on $\D$ 
is a map
$$
	\psi: \G \longrightarrow \mbox{Aut}(\D), \ \ g \mapsto \psi_g,
$$
from $\G$ to the auto-equivalences of $\D$, such that
for all $h,g\in \G$,
$
	\psi_g\psi_h \cong \psi_{gh}.
$
In a genuine action of $\G$, one also specifies isomorphisms $\alpha_{g,h}: \psi_g\psi_h \rightarrow \psi_{gh}$ and requires them to satisfy an associativity constraint.  Though the entirety of this paper is concerned only with a single weak action, the one we consider can be made genuine, and it is worth noting from the outset the importance of genuine actions.  If $\k$ is a field and the category $\D$ is enriched in finite dimensional $\k$-vector spaces, we say that $\psi$ is a {\it 2-representation} of $\G$. 

Let $[\mbox{Aut}(\D)]$ denote the group of isomorphism classes of auto-equivalences of $\D$.
Given a weak action of $\G$ on $\D$, the induced map
$
	[\psi]: \G\longrightarrow [\mbox{Aut}(\D)]
$
is a group homomorphism.  It often happens in examples that the category $\D$ is also triangulated, with $\G$ acting by triangulated auto-equivalences.  We can then restrict even more and ask that the triangulated category $\D$ be finitely-generated, which means that there are finitely many objects of $\D$ with the property that the smallest full triangulated subcategory containing all of them is $\D$ itself.   We then arrive at the following definition.
\begin{definition}
A group $\G$ is {\bf 2-linear} if $\G$ admits a 2-representation $\psi$ on a finitely-generated $\k$-linear triangulated category such that the associated group homomorphism $[\psi]$ is injective.  
\end{definition} 
This notion of a 2-linear group is analogous to the notion of a linear group, which is one that embeds into the group of invertible endomorphisms of a finite-dimensional $\k$-vector space.  In the last twenty years a number of explicit examples of 2-representations of groups on triangulated categories have appeared in representation theory, symplectic and algebraic geometry, and mathematical physics.  Often in these examples, however, it is the triangulated category that is of interest, with the group and its 2-representation appearing mainly as organizing objects.  Moreover, despite the existence of interesting examples, the class of 2-linear groups remains somewhat understudied as a chapter of combinatorial and geometric group theory.  For example, almost nothing is known at present about the group theoretic consequences of 2-linearity, and it is not at all clear how different the classes of 2-linear and linear groups are from one another.  Nevertheless, it appears that the class of 2-linear groups is very rich.  In particular, the following groups are either known or conjectured to be 2-linear.
\begin{itemize}

\item {\it Coxeter groups and their braid groups.}  Coxeter groups themselves are 2-linear
\footnote{The 2-linearity of a Coxeter group $W$ of simply-laced type follows, for example, from work of Frenkel-Khovanov-Schiffmann \cite{FKS}.  The underlying triangulated category in the FKS 2-representation is a category of matrix factorizations over a non-commutative zigzag algebra.  The Grothendieck group of this 2-representation is the reflection representation of $W$, which is known to be faithful.  One can give a similar construction for other Coxeter groups using matrix factorizations of Soergel bimodules.}
while their groups are known to be 2-linear in some important special cases \cite{BravThomas,IKU,Jensen,KhovanovSeidel,Licata,RZ}. In \cite{Rouquier}, Rouquier conjectures that the 2-representation of an arbitrary Artin-Tits braid group on the homotopy category of Soergel bimodules is faithful.  

\item {\it Mapping class groups of surfaces.}  The theory of bordered Floer-homology of Lipshitz-Oszvath-Thurston provides faithful 2-representations of mapping class groups of surfaces with boundary \cite{LOT}.  As far as we are aware, it is an open problem to show that the mapping class groups of closed surfaces are 2-linear.  The question of group-theoretic consequences of 2-linearity was first posed by Lipshitz-Oszvath-Thurston.  

\item {\it Fundamental groups of complexified hyperplane arrangements.}  Many of the organizing functors (twisting, shuffling, wall-crossing, cross-walling,...) of geometric representation theory and symplectic-algebraic geometry produce 2-representations of these fundamental groups.  Of particular interest here are the 2-representations of fundamental groups on the derived category of category $\mathcal{O}$ for a symplectic resolution \cite{BezLos,BLPW,BPW,Losev}.  It is an open question to determine when these 2-representations are faithful.
\end{itemize}

The $n$-strand Artin braid group $B_n$ is perhaps the best studied example from the point of view of 2-linearity, and $B_n$ appears as an entry in each of the three classes described above.  The existing literature on 2-representations of $B_n$ also motivates the interest in studying explicit faithful 2-representations in detail.  For the first non-trivial example of a representation of $B_n$ on a vector space -- the Burau representation -- is not faithful except for small $n$.  Moreover, although the group $B_n$ is known to be linear by work of Bigelow and Krammer \cite{Bigelow, Krammer}, studying $B_n$ via the Bigelow-Krammer representation is difficult.  On the other hand, the first non-trivial example of a 2-representation of $B_n$ -- the categorification of the Burau representation -- is already faithful, as was shown first by Khovanov-Seidel \cite{KhovanovSeidel} and, for $n=3$, Rouquier-Zimmermann \cite{RZ}.  Moreover, much of the rich mathematics related to the braid group is readily visible from the lens of this first faithful 2-representation.  (See, for example, \cite{KhovanovSeidel,Thomas,RZ,BravThomas,BaldwinGrigsby}, each of which contains a explanation of how some structure of interest in the study of the braid group appears in the categorified Burau representation.)

The goal of the current paper is to define and study what we feel is a basic and prototypical example of a faithful 2-representation of a group.  We construct a faithful 2-representation of the free group $F_n$ and explore how some of the structure of $F_n$ appears from the homological algebra of this 2-representation.  Our underlying category $\D$ is the homotopy category of projective modules over a finite dimensional $\k$-algebra $\A$, where $\k$ is a field of characteristic 0.  This algebra, which is an example of a {\it zigzag algebra}, is the Koszul dual of the preprojective algebra of the doubled complete graph (see Section \ref{sec:algebras}.)
Our main result, which establishes the 2-linearity of $F_n$, is the following.

\begin{theorem}\label{thm:main}
The action of $F_n$ on $\D$ is faithful.
\end{theorem}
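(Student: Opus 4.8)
The plan is to prove faithfulness by analysing the minimal complexes produced by the action and packaging the analysis as a table-tennis argument. The first step is to make the action of $F_n$ on $\D$ completely explicit. The triangulated category $\D$ is generated by the indecomposable projective $\A$-modules $P_i$, one for each vertex of the doubled complete graph, and the generators $x_1,\dots,x_n$ of $F_n$ act by the invertible functors $\Phi_1,\dots,\Phi_n$, where $\Phi_i=(-)\otimes_{\A}R_i$ with $R_i=[\,\A e_i\otimes_{\k}e_i\A\to\A\,]$ the perfect-complex incarnation of the spherical twist at the vertex $i$. Evaluating on the generators one finds $\Phi_i(P_i)\simeq P_i[1]$ and, for $j\ne i$, that $\Phi_i(P_j)$ is the minimal two-term complex $[\,P_i^{\oplus 2}\to P_j\,]$, the multiplicity $2$ coming from the doubled edge between $i$ and $j$; the inverses $\Phi_i^{-1}$ are described symmetrically. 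From these formulas one reads off the rule by which $\Phi_i^{\pm1}$ transforms the minimal complex of an arbitrary object of $\D$. I would stress here that on the Grothendieck group $[\Phi_i]$ is an involution (it sends $[P_i]\mapsto-[P_i]$ and $[P_j]\mapsto[P_j]-2[P_i]$), so the action on $K_0(\D)$ factors through the quotient of $F_n$ by the normal closure of $x_1^2,\dots,x_n^2$, and hence is far from faithful; the faithfulness of Theorem \ref{thm:main} has to be extracted from the triangulated structure, and in particular from the fact that each $\Phi_i$ has infinite order as a functor, which is immediate since $\Phi_i^{k}(P_i)\simeq P_i[k]$.

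The second step is to choose the combinatorial invariant of an object used to play ping-pong. Writing $M\in\D$ as its minimal complex of projectives (unique up to isomorphism), one records for each vertex $l$ the set of homological degrees at which $P_l$ occurs, together with the width $\wid(M)$ of the complex; from this profile one singles out, for each $i$, the class $\mathcal X_i$ of objects whose minimal complex ``is visibly an output of $\Phi_i$'' --- informally, $P_i$ occurs in a long pattern of consecutive degrees of the type manufactured by iterating $\Phi_i^{\pm1}$, and this pattern carries the width. One then verifies that $\mathcal X_1,\dots,\mathcal X_n$ are pairwise disjoint and that $P_1$ (say) lies in none of them, and proves the table-tennis estimate: for all $i$, all $j\ne i$, and all nonzero $m\in\Z$ one has $\Phi_i^{m}(\mathcal X_j)\subseteq\mathcal X_i$ and $\Phi_i^{m}(P_1)\in\mathcal X_i$. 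Granting this, the ping-pong (table-tennis) lemma for free products shows that the subgroup of $[\aut(\D)]$ generated by $[\Phi_1],\dots,[\Phi_n]$ is the free product $\langle[\Phi_1]\rangle*\cdots*\langle[\Phi_n]\rangle$; each factor is infinite cyclic, so this group is free of rank $n$, and being at the same time the image of $[\psi]\colon F_n\to[\aut(\D)]$ it follows that $[\psi]$ is an isomorphism onto it. Essentially the same bookkeeping yields the sharper statement that the reduced word representing $w\in F_n$ can be recovered from the minimal complex of $\Phi_w(P_1)$ --- in particular $\wid(\Phi_w(P_1))$ grows linearly in the length of $w$ --- which both reproves faithfulness and makes precise how the structure of $F_n$ is reflected in the homological algebra of $\D$.

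The crux of the argument, and the step I expect to be the main obstacle, is the inductive estimate common to both packagings: controlling the minimal complex of $\Phi_i^{\pm1}(M)$ when $M$ already lies in some $\mathcal X_j$ with $j\ne i$ (equivalently, when $M=\Phi_{w'}(P_j)$ for a shorter reduced word). Because $\Phi_i$ is tensoring with a complex of bimodules, applying it to a minimal complex of projectives produces a complex that is in general not minimal, and one must prove that the only Gaussian eliminations available are the ``expected'' ones coming from the identity summand of $e_i\A e_i$, and that after performing them the newly created $P_i$-pattern survives intact, so that the width genuinely increases rather than collapsing. This is done by induction on word length, the base case being the explicit first-step computation; its heart is a vanishing statement for the relevant composites of zigzag paths across the doubled complete graph --- the analogue in this setting of the ``no unexpected cancellation'' lemmas in Khovanov--Seidel's study of the categorified Burau representation \cite{KhovanovSeidel} --- so that the specific relations of the zigzag algebra $\A$, rather than any formal property of spherical twists, are what make the theorem true.
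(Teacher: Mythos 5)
There is a genuine gap. Your proposal correctly identifies the overall shape of the argument (ping pong played on minimal complexes of projectives, with the generators acting by the twists $\Sig_i^{\pm1}$), but the two steps that carry the actual mathematical content are left unexecuted. First, the ping pong sets $\mathcal X_i$ are never defined: ``objects whose minimal complex is visibly an output of $\Phi_i$'' and ``$P_i$ occurs in a long pattern of consecutive degrees of the type manufactured by iterating $\Phi_i^{\pm1}$'' are not membership criteria one can test, so the disjointness of the $\mathcal X_i$ and the inclusions $\Phi_i^{m}(\mathcal X_j)\subseteq\mathcal X_i$ cannot even be stated precisely, let alone verified. Second, you explicitly defer the inductive control of the minimal complex of $\Sig_i^{\pm1}(M)$ --- the claim that the only Gaussian eliminations are the ``expected'' ones and that the newly created $P_i$-pattern survives with increased width --- and that deferred step \emph{is} the theorem: a faithfulness proof organized around homological widths lives or dies on exactly this cancellation control, and nothing in your write-up supplies it or even states the vanishing lemma it would rest on.

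The paper's proof sidesteps this difficulty by not using homological degrees or widths at all. It equips $\A$ with the internal $\widetilde{o}$-grading (the orientation grading for the symmetric orientation of the doubled complete graph), which induces a baric structure on $\D$; the invariants of an object $Y$ that drive the argument are its top and bottom baric slices $Y(\phi_+)$, $Y(\phi_-)$, i.e.\ the internal-degree extremes of its minimal complex, not its homological support. The sets $X_i^{\pm}$ are defined by two checkable conditions: the top (resp.\ bottom) slice is a direct sum of shifts of $P_i$, and $\Hom(Y,\P_j\la\phi_-(Y)\ra)=0$ if and only if $j=i$ (resp.\ the dual condition). The non-negativity of the $\widetilde{o}$-grading forces $\Sig_i$ to strictly increase $\phi_+$ while fixing $\phi_-$ on these sets, and the Hom-vanishing conditions are verified by exhibiting explicit degree-zero chain maps out of the bottom slice and ruling out homotopies using the defining relations of $\A$; no global control of the minimal complex of $\Sig_w(P_j)$ for long words $w$ is ever required. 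If you wish to salvage your homological-width version you must prove the cancellation lemma you allude to from scratch; otherwise I would replace your invariant by the $\widetilde{o}$-baric one, where the needed monotonicity is built in.
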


The zigzag algebra $\A$ has $n$ indecomposable projective modules $P_1,\dots,P_n$, and the Seidel-Thomas spherical twists in these objects define the action of the standard generators of $\s_1,\dots,\s_n$ of $F_n$.  If spherical objects $E_1,E_2$ have no morphisms between them, then the associated spherical twists 
$\Phi_{E_1}$ and $\Phi_{E_2}$ will commute; when there is a one-dimensional space of morphisms between them, on the other hand, the spherical twists will braid.  Thus, if the spherical twists in objects $P_1,\dots,P_n$ are to generate a free action of $F_n$, the dimensions of morphism spaces between each pair of the generating spherical objects must be at least 2.  Our algebra $\A$ is constructed so that $\mbox{dim} \Hom(P_i,P_j)=2$ for all $i, j=1,\dots,n$.  From this point of view, our main result Theorem \ref{thm:main} below is somewhat analogous to a theorem of Keating \cite{Keating}, who proves that if $L$ and $L'$ are two non-quasi-isomorphic Lagrangian spheres in a symplectic manifold $M$ with $\mbox{dim} \Hom(L,L') = 2$ in the Fukaya category of $M$, then the Dehn twists in $L$ and $L'$ generate a free froup of rank $2$ inside the symplectomorphism group of $M$.

Our proof of Theorem \ref{thm:main} is by ping pong.  After establishing Theorem \ref{thm:main}, we devote the remaining sections to describing how other structure of interest in the mathematics of the free group appears in the representation theory of the zigzag algebra $\A$. For instance, we describe is how two different monoids in $F_n$ arise as stabilizers of 
``non-negative" subcategories of the category $\D$.  These non-negative subcategories, in turn, arise from non-negative gradings on the zigzag algebra.  For example, one grading on $\A$ -- the $\widetilde{o}$-grading -- gives rise to a subcategory $\D_{\geq 0}^{\widetilde{o}} \subset \D$ with the property that
$g\in F_n$ preserves $\D_{\geq 0}^{\widetilde{o}}$ if and only if $g$ is in the standard positive monoid generated by $\s_1,\dots,\s_n$.  This particular grading is used crucially in our ping pong proof of Theorem \ref{thm:main}.  Another grading on $\A$
-- the $\vec{o}$-grading -- gives rise to a different subcategory $\D_{\geq 0}^{\vec{o}}$ with the property that
$g(\D_{\geq 0}^{\vec{o}})\subset \D_{\geq 0}^{\vec{o}}$ if and only if $g$ is in the dual positive monoid $F_n^+$ of $F_n$ defined by Bessis in \cite{Bessis}. 
In fact, the ping pong construction used in the proof Theorem \ref{thm:main}, which uses the baric structure determined by the $\widetilde{o}$-grading, has a dual analog which uses the $\vec{o}$-grading and the Bessis monoid.  The appearance of the Bessis monoid in relation to the $\vec{o}$-grading hints at a close relationship between the $\vec{o}$-graded representation theory of $\A$ and the topology of the punctured disk (whose fundamental group is the free group).  This relationship is partially explored in Section \ref{sec:dualpingpong} (see, for example, Proposition \ref{prop:equiv}).

The categories $\D_{\geq 0}(\widetilde{o})$ and $\D_{\geq 0}(\vec{o})$ are both the non-negative parts of {\it baric structures} in the sense of Achar-Treumann \cite{AcharTreumann}.  At the end of the paper we use the compatability of monoids and baric structures to give homological interpretations of the standard and Bessis dual word-length metrics on $F_n$ in Theorems \ref{thm:metric1} and \ref{thm:metric2}.  These theorems equate the relevant word length of $g\in F_n$ with the associated ``baric spread" of $g$ in its action on $\D$.

A third grading on $\A$, the Koszul path-length grading, gives rise to a $t$-structure $(\D^{\geq 0},\D^{\leq 0})$.  
Just as for the baric subcategory $\D_{\geq 0}(\widetilde{o})$, a group element $g\in F_n$ preserves the subcategory $\D^{\geq 0}$ if and only if $g$ is in the standard positive monoid.  However, the metric on the free group determined by this $t$ structure is different than the metrics defined by baric structures, and the little bit we know about it is mentioned at the end of the paper.  It would be interesting to give a combinatorial description, without direct reference to the category $\D$, of this exotic metric.  

The construction of the zigzag algebra $\A$ and the action of $F_n$ on the homotopy category of projective $\A$ modules is very similar to a number of  earlier constructions of braid and Artin-Tits group actions on representation categories of quivers \cite{HuerfanoKhovanov,KhovanovSeidel,Rouquier,RZ}.  Analogs of some aspects of this paper should exist for arbitrary Artin-Tits braid groups, and we plan to write about special cases of spherical and right-angled Artin-Tits groups in \cite{LicataQueffelec} and \cite{Licata}, respectively.  In the remainder of the introduction we give a more detailed explanation of the contents of the paper.

\subsection{Contents}

In Section \ref{sec:free} we collect some information about the free group to be used later, including the definition of reflections in $F_n$, the Bessis monoid $F_n^+$, and the Hurwitz action of $B_n$ on reduced expressions for the Coxeter element $\gamma = \s_1\dots \s_n$.  

In Section \ref{sec:ping}, we recall Klein's ping pong lemma and state a dual ping pong lemma using the Bessis monoid $F_n^+$.  The point of both of these lemmas is to formulate conditions which guarantee that an action of $F_n$ on a set is free.  A notable point about the dual ping pong lemma is that it only requires studying the positive Bessis monoid $F_n^+$ and not the entire free group.  Our formulation of dual ping pong using $F_n^+$ is motivated by a similar formulation for the positive monoid of the braid group $B_n$ used by Krammer in \cite{Krammer}.

In Section \ref{sec:2rep} we begin by defining the zigzag algebra $\A$, introduce several gradings on it, and discuss some basic properties of the homotopy category of (graded) projective $\A$ modules.  We then define the relevant baric and $t$-structures on this triangulated category.  Finally, the 2-representation $\Psi:F_n\rightarrow \mbox{Aut}(\D)$ is defined in Section \ref{sec:functors}.

The proof of Theorem \ref{thm:main} is given in Section \ref{sec:pingpong}, and it is by ping pong.  Crucial in the construction is the non-negative 
$\widetilde{o}$-grading of the zigzag algebra arising from a specific orientation $\tilde{o}$ of the doubled complete graph; this grading induces a baric structure on $\D$, and we use the slices of this baric structure to construct the non-empty disjoint sets required for the application of the ping pong lemma.  As a consequence, we obtain a homological realization of the standard word-length metric on $F_n$, which we formulate at the end of the paper in Theorem \ref{thm:metric1}.

In Section \ref{sec:dualpingpong}, we revisit the relationship between the free group and the category $\D$ from the point of view of the $\vec{o}$-grading of the zigzag algebra and the associated baric structure on $\D$.  Here the appearance of the free group as the fundamental group of the punctured disk becomes important.  The main result here is Proposition \ref{prop:dual}, which plays dual ping pong using complexes of projective modules.  (Since they are not needed for the proof of Theorem \ref{thm:main}, a few of the proofs in Section \ref{sec:dualpingpong} are sketched rather than written out completely.) A corollary of Proposition \ref{prop:dual} is that we obtain a homological realization of the word-length metric in the simple Bessis generators of $F_n$; the precise statement is written in Theorem \ref{thm:metric2}. An important part of the setup for dual pingpong is the Hurwitz action of the braid group $B_n$ on reduced reflection expressions for $\gamma= \s_1\dots \s_n$ and a parallel action of $B_n$ on spherical collections in $\D$ (see Section \ref{sec:Hurwitzspherical}).  The action of $B_n$ on spherical collections in $\D$ is similar to (and motivated by) constructions of Bridgeland in \cite{BridgelandCY}.  The final subsection of the paper constructs the exotic metric on $F_n$, explains the little bit we know about it, and poses the question of describing it combinatorially.  

\subsection{Acknowledgements}
We would like to thank Hoel Queffelec for a number of helpful conversations about this paper and our related ongoing collaboration, and the referee for a number of helpful comments.  This paper was written for the proceedings of the conference on Categorification in Algebra, Geometry and Physics, held in Corsica in May, 2015 in honor of the 60th birthday of Christian Blanchet.  We thank the organizers of that conference for the opportunity to speak about this work there, and the editors of this conference proceedings for their patience.

%%%%%%%%%%%%%%%%%%%%%%%%%%%%%%%%%%%%%%%%%%%%%%%%%%%%%%%%%%%%%%%%%%%%%
\section{The Free group}\label{sec:free}
In this section we collect some information about the free group.  We recall the definitions of reflections in $F_n$, the positive monoid $F_n^+\subset F_n$ defined by Bessis \cite{Bessis}, and the Hurwitz action of the $n$-strand Artin braid group $B_n$ on reduced reflection expressions for the Coxeter element 
$\gamma = \s_1\dots \s_n.$  

\subsection{The positive monoid of Bessis}

Our presentation in this section closely follows that of \cite{Bessis}, and we refer the reader there for further information regarding the positive monoid and quasi-Garside structure of $F_n$.   We fix $n+1$ distinct points $x_0,x_1,\dots,x_n\in \C$, and set
$$
	F_n := \pi_1(\C-\{x_1,\dots,x_n\},x_0).
$$
It is convenient to choose $x_0 = -1$, and $x_i$ for $i\neq 0$ purely imaginary, with
$$
	-1 < im(x_1) < \dots < im(x_n) < 1,
$$
so that the points $\{x_1,\dots,x_n\}$ lie on a vertical line segment inside the unit disc.  
The unit circle itself, traversed counterclockwise starting at $x_0=-1$, defines an element $\gamma\in F_n$.
We refer to $\gamma$ as a {\it Coxeter element} of $F_n$, since it's image in the Coxeter group $W_n = F_n/\la \s_i^2 \ra$ is a Coxeter element of $W_n$.
In terms of the standard generators $\s_1,\dots,\s_n$, we have $$\gamma = \s_1\s_2\dots \s_n.$$

A {\it connecting path} is a continuous map $f:[0,1] \rightarrow \C$ such that $f(0) = x_0$, $f(1)=x_i$ for some $i=1,\dots,n$, and $t\neq 1\implies f(t)\notin \{x_1,\dots,x_n\}$.  Given a connecting path $f$, we may assign an element $r_f\in F_n$ as follows: begin at $f(0)=x_0$, travel close to $f(1)$ via the connecting path, make a counterclockwise turn around a small circle centred at $f(1)$, and then return to $x_0$ by traversing $f$ backwards.
A {\it $\gamma$-reflection} of $F_n$ is an element $r\in F_n$ such that there is a connecting path $f$ such that $r=r_f$.  For example, the standard generators $\s_1,\dots,\s_n$ of $F_n$ are $\gamma$-reflections, and the corresponding connecting paths are drawn in Figure 1.  (Note that the inverses $\s_1^{-1},\dots,\s_n^{-1}$ are {\it not} $\gamma$-reflections.)  Another connecting path $f$ given in Figure 2;  it corresponds to the $\gamma$-reflection $g^{-1} \s_2 g$, with $g = \s_3^{-1}\s_4\s_3\s_2\s_1$.

\begin{figure}\label{fig:coordinates}
\begin{center}
\scalebox{.75}{\includegraphics{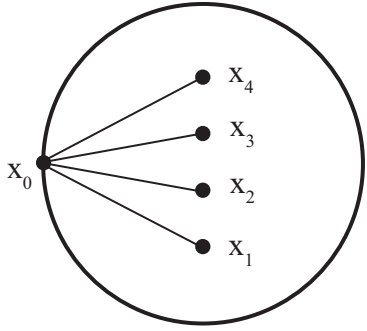}}
\caption{ The standard coordinate system for $F_n$.  }\label{fig:disc1}
\end{center}
\end{figure}

\begin{figure}\label{fig:reflection}
\begin{center}
\scalebox{.75}{\includegraphics{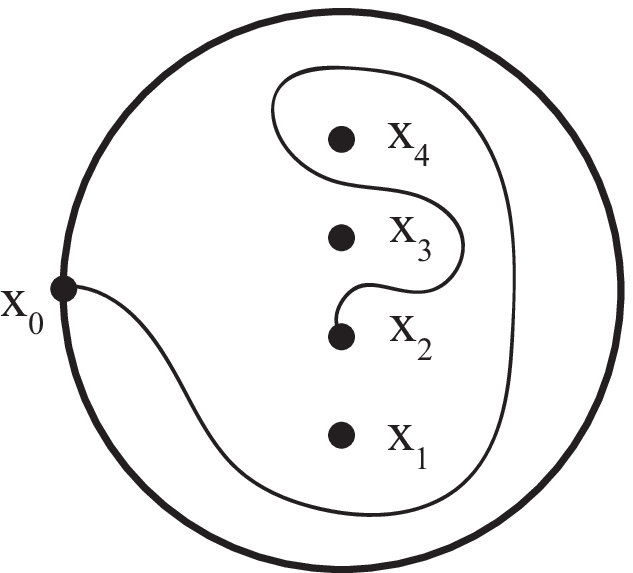}}
\caption{ The $\gamma$-reflection $(\s_1^{-1}\s_2^{-1}\s_3^{-1}\s_4^{-1}\s_3)\s_2(\s_3^{-1}\s_4\s_3\s_2\s_1) $ }\label{fig:disc1}
\end{center}
\end{figure}

We denote by $F_n^+$ the {\it Bessis monoid}, which is the submonoid of $F_n$ generated by the $\gamma$-reflections.  The Bessis monoid $F_n^+$ has a divisibility partial ordering: for $f,g\in F_n^+$,
$$
	f\leq g \iff \exists g\in F_n^+, fh = g.
$$
In fact, there exists $h\in F_n^+$ with $fh=g$ if and only if there exists $h'\in F_n^+$ with $h'f = g$, so the partial order defined above does not depend on the choice of left or right-divisibility.  If $f\leq g$, we say that $f$ {\it divides} $g$.

A slightly larger generating set for $F_n^+$, which includes the $\gamma$-reflections and which plays a prominent role later, is the interval in the Bessis monoid between $1$ and the Coxeter element.  We denote this set by $\Be^+$:
$$
	\Be^+ := \{g \in F_n^+: 1\leq g \leq \gamma\}.
$$
We refer to elements of $ \Be^+$ as {\it simple Bessis elements} of $F_n^+$.

A crucial property of the Bessis monoid $F_n^+$ is that it is a lattice, that is, has greatest common divisors and least common multiples.  In particular, for any element $g\in F_n^+$, there exists a unique left factor $lf(g)\in \Be^+$ such that 
\begin{itemize}
\item the left factor $lf(g)$ divides $g$, and
\item  for all simple Bessis elements $x\in\Be^+$, if $x$ divides $g$, then $x$ divides $lf(g).$
\end{itemize}
The corresponding {\it left-greedy normal form} of $g\in F_n^+$ is defined to be the unique expression for $g$ as a product
$$
	g = y_1y_2\dots y_k
$$
with $y_i\in \Be^+$ and $y_i = lf(y_iy_{i+1}\dots y_k)$ for all $i=1,\dots,k$.

\subsection{The Hurwitz action of $B_n$ on reduced reflection expressions of $\gamma = \s_1\dots \s_{n}$}
For any $(g_1,\dots,g_n)\in F_n^{\times n}$, set
$$
	\tau_i (g_1,\dots,g_n) = (g_1,\dots, g_{i-1}, g_i g_{i+1}g_i^{-1}, g_i,g_{i+2},\dots,g_n).
$$
This defines {\it the Hurwitz action} of the $n$-strand Artin braid group $B_n$ on $F_n^{\times n}$. We are interested in the $B_n$ action on $n$-tuples of $\gamma$-reflections whose product is the Coxeter element $\gamma$.

\begin{definition}
A length $k$ {\it reflection expression} for $\gamma$ is a $k$-tuple $(r_1,\dots,r_k)$ of $\gamma$-reflections whose product
$r_1\dots r_k = \gamma$.  Since $k=n$ is the minimal possible length of a reflection expression for $\gamma$, we refer to a length $n$ reflection expression for $\gamma$ as a {\it minimal length reflection expression.}  We denote by $Red(\gamma)$ the set of minimal length reflection expressions for $\gamma$.
\end{definition}

The following theorem is then due to Artin \cite{Artin}, and we refer to \cite{Bessis} for a proof.
\begin{theorem}\label{thm:Artin}
The Hurwitz action of the Artin braid group $B_n$ on $Red(\gamma)$ is simple and transitive.
\end{theorem}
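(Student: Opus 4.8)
\emph{Transitivity.} The plan is to prove transitivity and simplicity (freeness) separately, using throughout the topological picture $F_n=\pi_1(\C-\{x_1,\dots,x_n\},x_0)=\pi_1(D-\{x_1,\dots,x_n\},x_0)$ with $D$ the closed unit disc and $x_0\in\partial D$. For transitivity I would first give $Red(\gamma)$ a geometric model. Call an \emph{arc system} an $n$-tuple $(\alpha_1,\dots,\alpha_n)$ of embedded arcs in $D$, with $\alpha_i$ running from $x_0$ to $x_i$, pairwise disjoint apart from their common endpoint $x_0$, leaving $x_0$ in a prescribed order (normalized so that the standard connecting paths recover $(\s_1,\dots,\s_n)$). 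Cutting $D$ along such a system produces a disc whose boundary circle reads off the relation $r_{\alpha_1}\cdots r_{\alpha_n}=\gamma$ by van Kampen, where $r_{\alpha_i}$ is the $\gamma$-reflection of the connecting path $\alpha_i$; so $(\alpha_i)\mapsto(r_{\alpha_i})$ defines a map $\mathcal{A}\to Red(\gamma)$, with $\mathcal{A}$ the set of isotopy classes of arc systems. The mapping class group of the $n$-punctured disc fixing $\partial D$ is $B_n$, and the change-of-coordinates principle shows that $B_n$ acts transitively and freely on $\mathcal{A}$ (transitively because any two arc systems differ by a boundary-fixing homeomorphism of $(D,\{x_i\})$; freely because a homeomorphism fixing an arc system up to isotopy is supported on the complementary disc, hence isotopic to the identity). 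A direct inspection of the effect of an elementary half-twist on arcs shows that, under $\mathcal{A}\to Red(\gamma)$, this $B_n$-action is intertwined with the Hurwitz action. Consequently, \emph{provided $\mathcal{A}\to Red(\gamma)$ is surjective}, it is a $B_n$-equivariant bijection and the Hurwitz action on $Red(\gamma)$ is transitive.

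\emph{The realization lemma --- the main obstacle.} The crux is thus surjectivity of $\mathcal{A}\to Red(\gamma)$: every minimal-length reflection expression is carried by a disjoint arc system. It is here that the minimality $k=n$ is essential, since it forbids ``wasted winding''. I would argue by induction on $n$. A $\gamma$-reflection is exactly a conjugate of one of the $\s_i$ (a small counterclockwise loop about $x_i$ pushed to $x_0$), so abelianizing the relation $r_1\cdots r_n=\gamma$ in $\Z^n$ shows that exactly one $r_m$ is conjugate to $\s_n$. Realize $r_m$ by an arc to $x_n$, move it to a standard position by a homeomorphism, delete the puncture $x_n$, and apply the inductive hypothesis to the resulting length-$(n-1)$ reflection expression for $\s_1\cdots\s_{n-1}$ in the $(n-1)$-punctured disc. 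The delicate step --- and the principal difficulty of the whole theorem --- is that, when the $(n-1)$ arcs produced by induction are reinstated in $D$, they may wind around $x_n$; ruling this out by bigon / innermost-disc surgeries, controlled by the fact that the factorization has minimal length $k=n$, is the real work. (This is essentially the classical content attributed to Artin, and in a final write-up one might simply cite \cite{Artin,Bessis} at this point.)

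\emph{Simplicity.} Freeness of the Hurwitz action follows from faithfulness of the classical Artin representation $B_n\hookrightarrow\aut(F_n)$. The $i$-th elementary half-twist $\beta_i$ acts on $F_n$ by $\s_i\mapsto\s_i\s_{i+1}\s_i^{-1}$, $\s_{i+1}\mapsto\s_i$, and fixes the remaining generators --- which is precisely the result of applying the Hurwitz move $\tau_i$ to $(\s_1,\dots,\s_n)$. Since $\tau_i$ and post-composition with a group homomorphism are given by the same universal word in the entries of a tuple, they commute; hence an immediate induction gives $(\tau_{i_k}\cdots\tau_{i_1})(\s_1,\dots,\s_n)=\bigl((\beta_{i_1}\cdots\beta_{i_k})(\s_1),\dots,(\beta_{i_1}\cdots\beta_{i_k})(\s_n)\bigr)$. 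Therefore a braid $\beta\in B_n$ fixing $(\s_1,\dots,\s_n)$ under the Hurwitz action yields an automorphism of $F_n$ (attached to the word-reversed braid) fixing every generator, hence the identity automorphism, hence $\beta=1$ by faithfulness of the Artin representation. Together with transitivity this forces every point stabilizer to be trivial, so the Hurwitz action on $Red(\gamma)$ is simple and transitive.
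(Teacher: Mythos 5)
The paper does not prove this theorem: it attributes the result to Artin and refers the reader to Bessis for a proof, so there is no argument of the paper's own against which to measure your proposal. That said, your sketch is a correct reduction of the theorem to its classical kernel, and it goes somewhat further than the paper does. The freeness half is complete as you present it: the Hurwitz move $\tau_i$ on the base tuple $(\sigma_1,\dots,\sigma_n)$ is precisely the Artin action of the generating half-twist on the free generators; the Hurwitz moves commute with entrywise postcomposition by any endomorphism of $F_n$ (since they are built from multiplication, conjugation, and transposition), so your intertwining formula with the reversed braid word follows by an easy induction; and faithfulness of the Artin representation $B_n\hookrightarrow\mathrm{Aut}(F_n)$ then forces trivial stabilizers. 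For transitivity, your arc-system model and the reduction to the ``realization lemma'' --- that every minimal-length $\gamma$-reflection factorization of $\gamma$ is carried by a disjoint arc system --- correctly isolate the substantive content of Artin's theorem, and your inductive scheme (abelianize to locate the unique factor conjugate to $\sigma_n$, straighten its arc by a homeomorphism, delete $x_n$, apply induction, then rule out winding of the reinstated arcs around $x_n$) has the right shape; but the innermost-disc/bigon surgery you gesture at, controlled by minimality of the length, is exactly where the work is, and you explicitly defer it to the same references the paper cites. In sum: you prove freeness outright, and you reduce transitivity to a cited lemma, whereas the paper cites both halves; your proposal is sound as an outline but, as you yourself acknowledge, is not a self-contained proof of transitivity.
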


\section{Ping pong and dual ping pong}\label{sec:ping}
In this section we recall the version of the ping pong lemma which we will use to prove the faithfulness Theorem \ref{thm:main} in Section \ref{sec:pingpong}.  We also make use of the Bessis monoid $F_n^+$ to formulate a dual ping pong lemma, which we will use later in Section \ref{sec:dualpingpong}.

\subsection{Ping pong}
The following lemma, versions of which are ubiquitous in combinatorial and geometric group theory, is usually attributed to Klein.
\begin{lemma}\label{lem:pingpong}  Fix $n\geq 2$.  Let $Y$ be a set, and let 
$$g_1,\dots,g_n: Y \rightarrow Y$$ 
be bijections.  Let $X_1^+,\dots,X_n^+,X_1^-,\dots X_n^-$ be subsets of $Y$, and denote by $X\subset Y$ their union.   
Suppose that
\begin{itemize}
\item the subsets $X_1^+,\dots,X_n^+,X_1^-,\dots X_n^-$ are pairwise disjoint;
\item there exist points $y_1,\dots,y_n \in Y \setminus X$ such that for $j\neq i$, $g_i^{\pm 1}(y_j) \in X_i^{\pm}$, and
\item $x\in X \setminus X_i^{\mp}- \implies g_i(x) \in X_i^{\pm}$.
\end{itemize}
Then the group homomorphism
$$ 
	F_n\rightarrow \{\text{Bijections } g:Y \rightarrow Y\}, \ \ \s_i \mapsto g_i
$$
is injective.
\end{lemma}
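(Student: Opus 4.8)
The statement is the classical table-tennis lemma, adapted so that $F_n$ is free on $\s_1,\dots,\s_n$ rather than $\mathbb{Z}/2$-generated; the two-sided families $X_i^+,X_i^-$ play the role that is standardly played by the two ``halves'' of the orbit of each generator. The plan is to show that a nonempty reduced word $w = \s_{i_1}^{\epsilon_1}\cdots \s_{i_k}^{\epsilon_k}$ in $F_n$ cannot act as the identity on $Y$, by exhibiting a point whose image under $w$ lies in $X$ and is therefore not that point (once we check $w$'s image lands in $X_{i_1}^{\epsilon_1}$, we can pick a witness outside $X$). Here ``reduced'' means no subword $\s_i\s_i^{-1}$ or $\s_i^{-1}\s_i$, but since $F_n$ is free there is no other cancellation to worry about — this is the feature that makes the free-group ping pong cleaner than the Coxeter case, where one must track $\s_i^2 = 1$.

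First I would set up the induction. Choose an index $j \neq i_k$ (possible since $n \geq 2$) and let $y = y_j \in Y \setminus X$. I claim that for every suffix $w_{\ell} = \s_{i_\ell}^{\epsilon_\ell}\cdots \s_{i_k}^{\epsilon_k}$ of $w$, one has $w_\ell(y) \in X_{i_\ell}^{\epsilon_\ell}$. The base case $\ell = k$ is exactly the second bullet of the hypothesis: $g_{i_k}^{\epsilon_k}(y_j) \in X_{i_k}^{\epsilon_k}$ because $j \neq i_k$. For the inductive step, assume $w_{\ell+1}(y) \in X_{i_{\ell+1}}^{\epsilon_{\ell+1}} \subseteq X$. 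To apply the third bullet to $x := w_{\ell+1}(y)$ and the generator $\s_{i_\ell}^{\epsilon_\ell}$, I must verify $x \notin X_{i_\ell}^{-\epsilon_\ell}$. This is where reducedness enters: if $i_\ell \neq i_{\ell+1}$ then $x \in X_{i_{\ell+1}}^{\epsilon_{\ell+1}}$ is disjoint from $X_{i_\ell}^{-\epsilon_\ell}$ by the first bullet (pairwise disjointness of all $2n$ sets); if $i_\ell = i_{\ell+1}$ then, since the word is reduced, $\epsilon_\ell = \epsilon_{\ell+1}$, so $-\epsilon_\ell = -\epsilon_{\ell+1} \neq \epsilon_{\ell+1}$ and again $x \in X_{i_{\ell+1}}^{\epsilon_{\ell+1}} = X_{i_\ell}^{\epsilon_{\ell+1}}$ is disjoint from $X_{i_\ell}^{-\epsilon_\ell} = X_{i_\ell}^{-\epsilon_{\ell+1}}$. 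Hence the third bullet gives $g_{i_\ell}^{\epsilon_\ell}(x) \in X_{i_\ell}^{\epsilon_\ell}$, i.e. $w_\ell(y) \in X_{i_\ell}^{\epsilon_\ell}$, completing the induction.

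Taking $\ell = 1$ yields $w(y) = w(y_j) \in X_{i_1}^{\epsilon_1} \subseteq X$, whereas $y_j \in Y \setminus X$; therefore $w(y_j) \neq y_j$ and $w$ does not act as the identity bijection of $Y$. Since this holds for every nonempty reduced word $w$, the homomorphism $F_n \to \mathrm{Bijections}(Y)$ sending $\s_i \mapsto g_i$ has trivial kernel, hence is injective. I expect the only genuinely delicate point to be the bookkeeping in the inductive step — specifically, the case analysis $i_\ell = i_{\ell+1}$ versus $i_\ell \neq i_{\ell+1}$ and the observation that reducedness forces $\epsilon_\ell = \epsilon_{\ell+1}$ in the former case, so that the exponent $-\epsilon_\ell$ we need to avoid is never the exponent $\epsilon_{\ell+1}$ that $x$ actually carries. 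Everything else is a direct unwinding of the three hypotheses.
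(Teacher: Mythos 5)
Your proof is correct and follows exactly the same route as the paper's: start the chain at a basepoint $y_j\in Y\setminus X$ with $j$ different from the index of the first-acting generator, push it through the reduced word by induction using disjointness plus reducedness to verify the hypothesis of the third bullet at each step, and conclude that the image lies in $X$ while $y_j$ does not. The only difference is that you write out the inductive step (including the $i_\ell=i_{\ell+1}$ versus $i_\ell\neq i_{\ell+1}$ case split) that the paper compresses into ``it follows from the assumptions of the lemma,'' which is a reasonable expansion rather than a new idea.
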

\begin{proof}

Let $\s =  \s_{i_k}^{\epsilon_k}\dots \s_{i_1}^{\epsilon_1}$ be a reduced word in $F_n$, so that no $\s_i$ is adjacent to a $\s_i^{-1}$, and let $g = g_{i_k}^{\epsilon_k}\dots g_{i_1}^{\epsilon_1}$ be the image of $\s$ under the homomorphism $\s_i\mapsto g_i$.  We must show that $g$ is not the identity map $1_Y$.

If we can find a point $y\in Y$ such that $g_{i_1}^{\epsilon_1}(y) \in X_{i_1}^{\epsilon_1}$, then it follows 
from the assumptions of the lemma that $g(y) \in X_{i_k}^{\epsilon_k}$.  So if we can find a $y$ such that $g_{i_1}^{\epsilon_1}(y) \in X_{i_1}^{\epsilon_1}$ but $y \notin X_{i_k}^{\epsilon_k}$, we will have $g(y) \neq y$.  Now the assumption on the points $y_j\in Y\setminus X$ guarantees that for $j\neq i_1$,
$$
	g(y_j) \neq y_j.
$$ 
Thus that $g\neq 1$.
\end{proof}

\subsection{Dual Ping pong}

The Bessis monoid $F_n^+$ has the pleasant feature that any element $\b\in F_n$ can be written
$$
	\beta = \alpha_1^{-1} \alpha_2, 
$$
where $\alpha_1,\alpha_2 \in F_n^+$ are in the positive Bessis monoid.  This implies the following.

\begin{lemma}\label{lem:dual}
Suppose that $\psi: F_n \rightarrow  \{\text{Bijections } g:Y \rightarrow Y\}$ is an action of $F_n$ on a set $Y$ which distinguishes elements of the Bessis monoid $F_n^+$.  Then $\psi$ is injective.
\end{lemma}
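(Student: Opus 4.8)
The plan is to derive the lemma directly from the fraction property of the Bessis monoid recorded immediately before its statement. Since $\psi$ is a group homomorphism, it is injective if and only if its kernel is trivial, so I would suppose that $\psi(\beta) = 1_Y$ for some $\beta\in F_n$ and deduce $\beta = 1$. By the fraction property — every $\beta\in F_n$ can be written $\beta = \alpha_1^{-1}\alpha_2$ with $\alpha_1,\alpha_2\in F_n^+$, a consequence of the quasi-Garside structure on $F_n^+$ discussed in Section~\ref{sec:free} — fix such a decomposition. Applying $\psi$ gives $\psi(\alpha_1)^{-1}\psi(\alpha_2) = 1_Y$, i.e.\ $\psi(\alpha_1) = \psi(\alpha_2)$ as bijections of $Y$. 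Now invoke the hypothesis that $\psi$ distinguishes elements of the Bessis monoid, which is precisely the statement that the restriction of $\psi$ to $F_n^+$ is injective: since $\alpha_1$ and $\alpha_2$ both lie in $F_n^+$, we get $\alpha_1 = \alpha_2$, hence $\beta = \alpha_1^{-1}\alpha_2 = 1$. This shows $\ker\psi$ is trivial, so $\psi$ is injective.

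I do not expect any genuine obstacle: the entire content of the lemma is packaged into the decomposition $\beta = \alpha_1^{-1}\alpha_2$, which may simply be quoted from Bessis's work on the quasi-Garside structure of $F_n$. Two minor points deserve a word. First, this decomposition is not unique, but uniqueness plays no role — a single such expression for the given $\beta$ is all that is used, since we only need to conclude $\alpha_1=\alpha_2$ in that expression. Second, some care is needed in phrasing the reduction in terms of $\ker\psi$, so that the hypothesis ``$\psi$ distinguishes the elements of $F_n^+$'' — a priori a statement about pairs of positive elements — is converted into ``$\psi$ has trivial kernel''; this rests only on the routine fact that a homomorphism of groups is injective iff its kernel is trivial. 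The value of the lemma, exploited later in Section~\ref{sec:dualpingpong}, is that it reduces checking faithfulness of an $F_n$-action to the purely combinatorial task of separating elements of the positive monoid $F_n^+$, which is what the dual ping pong argument accomplishes.
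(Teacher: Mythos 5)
Your proof is correct and is essentially identical to the paper's: both take $\beta\in\ker\psi$, write $\beta=\alpha_1^{-1}\alpha_2$ with $\alpha_1,\alpha_2\in F_n^+$ via the fraction property of the Bessis monoid, deduce $\psi(\alpha_1)=\psi(\alpha_2)$, and conclude $\alpha_1=\alpha_2$ hence $\beta=1$. The remarks on non-uniqueness of the decomposition and on the kernel reformulation are accurate but add nothing beyond the paper's argument.
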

\begin{proof}
Suppose that $\psi(\b) = 1_Y$ for some $\b\in F_n$.  Writing $\beta = \alpha_1^{-1} \alpha_2$
with $\alpha_1,\alpha_2 \in F_n^+$, we see that $\psi(\alpha_2)=\psi(\alpha_1)$.  But by assumption, $\psi$ distinguishes elements of $F_n^+$, so that $\alpha_1 = \alpha_2$.  Thus $\b = 1$.
\end{proof}

Lemma \ref{lem:dual} shows that one can reduce the question of whether or not an action of the free group is faithful to a question about whether or not the action distinguishes elements of the Bessis monoid.  As a result, a ``dual" analogue of the ping pong Lemma \ref{lem:pingpong} can be formulated using only the simple Bessis elements $\Be^+ = \{w: 1\leq w\leq \gamma\}$ .  More precisely, we have the following, which we refer to as a dual ping pong lemma for the free group.

\begin{lemma}\label{lem:dualpingpong}
Let $Y$ be a set, and let $\psi: F_n \longrightarrow \{\text{Bijections }g: Y \rightarrow Y \}$ be an action.  Denote by $\psi_\b$ the image of an element $\b\in F_n$ under this homomorphism.  Suppose that there exist non-empty disjoint subsets 
$\{X_w\}_{w\in \cal Be^+}$ of $Y$, such that for all simple Bessis elements $u,w\in \Be^+$,
$$
	\psi_u ( X_w) \subset X_{lf(uw)}.
$$
Then the group homomorphism $\psi$ is injective.
\end{lemma}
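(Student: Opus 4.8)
The plan is to deduce the statement from Lemma~\ref{lem:dual}: it suffices to show that $\psi$ distinguishes elements of the Bessis monoid $F_n^+$, i.e.\ that $\psi_\alpha = \psi_\beta$ with $\alpha,\beta\in F_n^+$ forces $\alpha=\beta$. The engine for this is a ``reading-off'' principle: for $\alpha\in F_n^+$ with left-greedy normal form $\alpha = y_1y_2\cdots y_k$ (all $y_i\in\Be^+\setminus\{1\}$), one has
$$
	\psi_\alpha(X_1) \subseteq X_{y_1}.
$$
Thus the bijection $\psi_\alpha$ already determines the first factor of the normal form of $\alpha$, because $X_1$ is non-empty and the family $\{X_w\}$ is pairwise disjoint. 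The ``hard part'' here is modest: everything is a formal consequence of the hypotheses together with the basic combinatorics of the quasi-Garside monoid $F_n^+$, and the only place I would need an actual (short) argument is the \emph{local} characterization of the greedy normal form, discussed below.

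To prove the reading-off principle I would establish the more flexible claim $\psi_{y_jy_{j+1}\cdots y_k}(X_1)\subseteq X_{y_j}$ for $j=k,k-1,\dots,1$ by downward induction on $j$. For $j=k$: since $1\in\Be^+$ and $lf(y_k)=y_k$ (as $y_k\in\Be^+$), the lemma's hypothesis with $u=y_k$, $w=1$ gives $\psi_{y_k}(X_1)\subseteq X_{lf(y_k\cdot 1)}=X_{y_k}$. For the inductive step, assuming $\psi_{y_{j+1}\cdots y_k}(X_1)\subseteq X_{y_{j+1}}$,
$$
	\psi_{y_jy_{j+1}\cdots y_k}(X_1)=\psi_{y_j}\bigl(\psi_{y_{j+1}\cdots y_k}(X_1)\bigr)\subseteq \psi_{y_j}(X_{y_{j+1}})\subseteq X_{lf(y_jy_{j+1})},
$$
using the lemma's hypothesis with $u=y_j$, $w=y_{j+1}$. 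So the claim comes down to the identity $lf(y_jy_{j+1})=y_j$. This is exactly the local-versus-global equivalence for left-greedy normal forms; the text's definition is phrased globally ($y_i=lf(y_i\cdots y_k)$ for all $i$), so I would record the short deduction: $y_j$ is a simple Bessis element dividing $y_jy_{j+1}$, hence $y_j$ divides $lf(y_jy_{j+1})$; conversely $lf(y_jy_{j+1})\in\Be^+$ divides $y_jy_{j+1}$ and therefore $y_jy_{j+1}\cdots y_k$, so by the maximality defining $lf$ it divides $lf(y_j\cdots y_k)=y_j$; antisymmetry of divisibility in the lattice $F_n^+$ then gives equality. (Equivalently, one may invoke the standard equivalence of the two descriptions of greedy normal forms in a quasi-Garside monoid.) This closes the downward induction.

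With the reading-off principle in hand, suppose $\psi_\alpha=\psi_\beta$ with $\alpha,\beta\in F_n^+$; I argue by induction on the sum of the numbers of factors in the left-greedy normal forms of $\alpha$ and $\beta$. If $\alpha=1$, then $\psi_\beta=1_Y$, so $X_1=\psi_\beta(X_1)\subseteq X_{z_1}$ where $z_1$ is the first normal-form factor of $\beta$; since $z_1\neq 1$ this contradicts disjointness (both sets are non-empty) unless $\beta=1$, so $\alpha=\beta$; the case $\beta=1$ is symmetric. Otherwise the normal forms $\alpha=y_1\cdots y_k$ and $\beta=z_1\cdots z_l$ have $k,l\geq 1$, and since $X_1$ is non-empty the set $\psi_\alpha(X_1)=\psi_\beta(X_1)$ is a non-empty subset of $X_{y_1}\cap X_{z_1}$, forcing $y_1=z_1$ by disjointness. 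Composing $\psi_\alpha=\psi_\beta$ with $\psi_{y_1}^{-1}=\psi_{y_1^{-1}}$ gives $\psi_{y_1^{-1}\alpha}=\psi_{y_1^{-1}\beta}$, and $y_1^{-1}\alpha=y_2\cdots y_k$ and $y_1^{-1}\beta=z_2\cdots z_l$ lie in $F_n^+$ with strictly fewer total normal-form factors (and these expressions are again in left-greedy normal form, since that condition is imposed factor by factor). By induction $y_1^{-1}\alpha=y_1^{-1}\beta$, hence $\alpha=\beta$. Therefore $\psi$ distinguishes elements of $F_n^+$, and Lemma~\ref{lem:dual} shows that $\psi$ is injective, as claimed. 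The only step I expect to require care in the full write-up is the local normal-form identity $lf(y_jy_{j+1})=y_j$ and, relatedly, the bookkeeping ensuring that deleting the first factor of a normal form keeps us inside $F_n^+$ with a genuinely shorter normal form; everything else is formal.
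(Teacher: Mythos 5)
Your proof is correct and follows essentially the same route as the paper: read off the first left-greedy factor from the bijection via the sets $X_w$, induct, and conclude with Lemma~\ref{lem:dual}. The only cosmetic difference is that you start the game from $X_1$ (using $1\in\Be^+$) where the paper starts from $X_{u_k}$, and you fill in the local identity $lf(y_jy_{j+1})=y_j$ that the paper leaves implicit.
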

\begin{proof}
Suppose that $\b\in F_n^+$.  Let $\b = u_1\dots u_k$ be the left greedy normal form of $\b$.  We will show that the left factor $u_1$ of $\b$ is determined by the bijection $\psi_\b$.  Once we show this, it follows by induction that the entire left greedy normal form of $\b$ can be read off from the bijection $\psi_\b$; in particular, $\psi$ will distinguish elements of the Bessis monoid $F_n^+$, and the statement then follows from Lemma \ref{lem:dual}.

To see that $u_1$ is determined by the bijection $\psi_\b$, choose $x\in X_{u_k}$, where $u_k$ is the rightmost term in the left greedy normal form of $\b$.  By the assumptions of the lemma, we have $\psi_\b(x)\in X_{u_1}$, so that $\psi_\b$ determines $u_1$.
\end{proof}
\begin{remark}\label{rem:simplification}
One can also show that in checking the condition 
$$\psi_u ( X_w) \subset X_{lf(uw)}$$
for all simple Bessis elements $u,w\in \Be^+$, it suffices to check that $\psi_t(X_w) \subset X_{lf(tw)}$ for all $w\in \Be^+$ and $t$ a $\gamma$-reflection.
\end{remark}

%%%%%%%%%%%%%%%%%%%%%%%%%%%%%%%%%%%%%%%%%%%%%%%%%%%%%%%%%%%%%%%%%%%%%
\section{A 2-representation the free group}\label{sec:2rep}
The goal of this section is to define a 2-representation of $F_n$.  The underlying category of the 2-representation will be the homotopy category of projective modules over a finite dimensional algebra $\A$.  The algebra $\A$ is itself a quotient of the the path algebra of a quiver, so we begin by introducing the required quiver notation.

\subsection{Quivers associated to the doubled complete graph}\label{sec:algebras}
Let $Q$ denote the doubled complete graph on $n$ vertices.  We number the vertices of $Q$ by the integers $1,2,\dots,n$.  In the doubled complete graph, for any pair of distinct vertices $i,j$ there are two (unoriented) edges between $i$ and $j$.  To distinguish these two edges from each other, we will name one of them the $x$-edge from $i$ to $j$ and the other the $y$ edge from $i$ to $j$.  The graph $Q$ for $n=4$ is depicted below.

%\begin{tikzpicture}
  %\SetGraphUnit{5}
  %\Vertex{2}
 % \NOWE(2){1}
  %\NOEA(2){3}
%	\NOEA(1){4}
%  \Edge[label = x](1)(2)
%  \Edge[label = y](2)(1)
 % \Edge[label = x](3)(2)
 % \Edge[label = y](2)(3)
 % \Edge[label = x](1)(3)
 % \Edge[label = y](3)(1)
% \Edge[label = x](1)(4)
%  \Edge[label = y](4)(1)
% \Edge[label = x](4)(3)
 % \Edge[label = y](3)(4)
% \Edge[label = x](2)(4)
 % \Edge[label = y](4)(2)
  %\tikzset{EdgeStyle/.append style = {bend left = 20}}
%\end{tikzpicture}

$$
\begin{tikzpicture}
  \SetGraphUnit{5}
  \Vertex{2}
  \NOWE(2){1}
  \NOEA(2){3}
	\NOEA(1){4}
  \Edge(1)(2)
  \Edge(2)(1)
  \Edge(3)(2)
  \Edge(2)(3)
  \Edge(1)(3)
  \Edge(3)(1)
 \Edge(1)(4)
  \Edge(4)(1)
 \Edge(4)(3)
  \Edge(3)(4)
 \Edge(2)(4)
  \Edge(4)(2)
  \tikzset{EdgeStyle/.append style = {bend left = 20}}
\end{tikzpicture}
$$

\subsection{The zigzag algebra $\widehat{A}$}
We now fix a field $\k$ of characteristic 0.  For a quiver $X$, we denote by $\mbox{Path}(X)$ the path algebra over $\k$.

Let $\widehat{Q}$ be the Ginzburg double of $Q$: in the quiver $\widehat{Q}$, each unoriented edge of $Q$ has been replaced by two oriented edges, one in each direction; thus, for each $i\neq j$ there are now four oriented edges between $i$ and $j$, two of them pointing $i \to j$ and two pointing $j\to i$.  In addition, we put a loop, labeled $z$, at each vertex.  For each pair of vertices $i,j$ with $i<j$, we label the edges from $i\to j$ by $x$ and $y$, and the edges pointed $j \to i$ by $x^*$ and $y^*$.  The quiver $\widehat{Q}$ for $n=3$, with its edge labels, is drawn below.

$$
\begin{tikzpicture}
  \tikzset{EdgeStyle/.append style = {->,bend left = 8}}
  \SetGraphUnit{5}
  \Vertex{2}
  \NOWE(2){1}
  \NOEA(2){3}
  \Edge[label = x](1)(2)
  \Edge[label = x*](2)(1)
  \Edge[label = x*](3)(2)
  \Edge[label = x](2)(3)
  \Edge[label = x](1)(3)
  \Edge[label = x*](3)(1)
    \tikzset{EdgeStyle/.append style = {->,bend left = 25}}
      \Edge[label = y](1)(2)
  \Edge[label = y*](2)(1)
  \Edge[label = y*](3)(2)
  \Edge[label = y](2)(3)
  \Edge[label = y](1)(3)
  \Edge[label = y*](3)(1)
  
 \Loop[dist = 4cm, dir = NO, label = z](1.west)
  \Loop[dist = 4cm, dir = WE, label = z](2.south)
   \Loop[dist = 4cm, dir = SO, label = z](3.east)

\end{tikzpicture}
$$

The main algebra of interest here is a finite-dimensional quotient of the path algebra  $\mbox{Path}_\k(\widehat{Q})$ of the Ginzburg quiver $\widehat{Q}$.  
The length 0 path at vertex $i$ in $\mbox{Path}_\k(\widehat{Q})$ is denoted by $e_i$, and this path is an idempotent in $\mbox{Path}_\k(\widehat{Q})$.  When regarded as elements of the path algebra, we somewhat abusively refer to edges of $\widehat{Q}$ by their labels.  So, for example, the four edges connecting vertices 1 and 2 will be referred to as $x,x^*,y$ and $y^*$, as will the four edges connecting vertices 2 and 3, while each of the loops will be denoted by $z$.  As a result, in order for a string of letters in the alphabet $\{x,y,x^*,y^*,z\}$ to give a well-defined path, we should specify the starting and ending vertices of the path.

We define
$$
	\widehat{A} = \mbox{Path}_\k(\widehat{Q}) / I,
$$
where $I$ is the two-sided ideal in $\mbox{Path}_\k(\widehat{Q})$ generated by
\begin{itemize}
\item all length two paths which start and end at distinct vertices;
\item the following linear combinations of length two paths which start and end at the same vertex:
$$
	e_i a^*b e_i - \delta_{a,b}e_i  ze_i , \ \  a,b\in \{x,y,x^*,y^*\}.
$$
\end{itemize}
In the last relation it is understood that ${x^*}^* = x$ and ${y^*}^* = y$. 

A few words about these relations:
first, note that as a consequence of the second relation we could have presented the algebra $\A$ as a quotient of the path algebra of the quiver without using the loops $z$ at all: for example, the loop $e_1ze_1$ at vertex 1 is equal to the length 2 path $e_1xx^*e_1$ starting and ending at vertex 1 and passing through vertex 3; second, since all of the length two relations involving a path $i \to j \to k$ do not depend on the middle vertex $j$, we have also omitted notation for this middle vertex in the relations above.  Finally, a special case of the last relation says that a length two path of the form $e_ix^*ye_i$ starting and ending at vertex $i$ -- regardless of which other vertex it travels through -- is zero.  (Similarly, the length two paths $e_ixy^*e_i,\ e_iyx^*e_i$ and $e_iy^*xe_i$ are zero.)  The loop $e_iz e_i$ at vertex $i$ is non-zero, but it follows from relations that any path through $i$ which involves both $z$ and another edge in the quiver must be equal to zero in $\A$.
 
The case $n=1$ is slightly degenerate, since when $n=1$ there are no edges in $Q$; our convention when $n=1$ is to set $\widehat{A} \cong \k[z]/z^2$.  By analogy with the case of simply-laced quivers considered in \cite{HuerfanoKhovanov}, we call the algebra $\widehat{A}$ the {\it zigzag algebra} of $Q$.

The following lemma is immediate from the definition of the algebra $\widehat{A}$.  (Essentially, the relations in $\A$ were chosen so that the lemma below would hold.)
\begin{lemma}\label{lem:pathspaces}
For $i<j$:
$$
	e_i\widehat{A} e_j = \mbox{span}_{\k}\{x,y\} \cong \k^2,
$$
$$
	e_j\widehat{A} e_i = \mbox{span}_{\k}\{x^*,y^*\} \cong \k^2,
$$
and
$$
	e_i\widehat{A} e_i = \mbox{span}_\k\{e_i,z\} \cong \k^2.
$$
In particular, $\widehat{A}$ is a finite-dimensional $\k$-algebra of dimension $2n^2$.
\end{lemma}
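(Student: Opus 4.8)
The plan is to compute each block of the Peirce-style decomposition $\A=\bigoplus_{i,j}e_i\A e_j$ and then add up dimensions. The first observation is that the ideal $I$ is block-homogeneous: every generator of $I$ lies in a single $e_i\,\mathrm{Path}_\k(\widehat Q)\,e_j$, so $I=\bigoplus_{i,j}\bigl(I\cap e_i\,\mathrm{Path}_\k(\widehat Q)\,e_j\bigr)$ and hence $e_i\A e_j = e_i\,\mathrm{Path}_\k(\widehat Q)\,e_j\,\big/\,\bigl(I\cap e_i\,\mathrm{Path}_\k(\widehat Q)\,e_j\bigr)$. Since $e_i\,\mathrm{Path}_\k(\widehat Q)\,e_j$ has the usual $\k$-basis of directed paths, it is enough to understand which path classes survive in $\A$, block by block.

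For the upper bound I would show that in $\A$ every path of length $\ge 3$ vanishes and every path of length $2$ is either $0$ or one of the loops $z_i$. Indeed: a length-two path between distinct vertices is a generator of $I$, so it is $0$; a length-two path closed at $i$ with the same middle vertex $i$ is $z_i^2$, which vanishes because $z_i=x^*x$ and $z_i=y^*y$ (equivalently the "crossed" relation $xy^*=0$) give $z_i^2=x^*(xy^*)y=0$, the case $n=1$ with $\A\cong\k[z]/z^2$ being immediate; and a length-two path closed at $i$ through a distinct middle vertex is literally one of the relation terms $e_i a^*b\,e_i$, hence equals $\delta_{a,b}z_i$. A length-three path $\gamma_3\gamma_2\gamma_1$ then reduces: $\gamma_2\gamma_1$ is $0$ (and we are done) or a loop $z$, and then $\gamma_3 z$ is again a length-two path either between distinct vertices or of the form $z^2=0$; either way the whole path is $0$. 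Since $I$ is two-sided, any longer path is a multiple of such a vanishing length-three path, hence $0$. So $\A$ is spanned by the $e_i$, the loops $z_i$, and the $4\binom n2$ arrows $x,y,x^*,y^*$ between pairs, giving $\dim_\k e_i\A e_j\le 2$ in each of the three forms stated, and $\dim_\k\A\le 2n+4\binom n2=2n^2$.

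For the reverse inequality I would verify that these $2n^2$ elements are in fact linearly independent, i.e.\ that the relations force no further collapse (so $z_i\neq 0$, and $x,y$ remain independent, etc.). The cleanest route is Bergman's diamond lemma applied to the rewriting system read off from the relations — reduce every length-two path between distinct vertices to $0$, every $e_i a^*b\,e_i$ to $\delta_{a,b}z_i$, and (a consequence, added to close the system) every $z_i^2$ to $0$. Since every length-two word is then a left-hand side, the irreducible words are exactly the $e_i$ and the length-one arrows, which is the claimed basis provided all ambiguities resolve. The only overlap ambiguities are words $\gamma_3\gamma_2\gamma_1$ in which $\gamma_2\gamma_1$ and $\gamma_3\gamma_2$ are both reducible, and checking confluence there reuses exactly the facts established in the upper bound (that $z_i^2=0$ and that $z_i\gamma=0=\gamma z_i$ for an arrow $\gamma$ meeting $i$ at a second vertex). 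Alternatively one can avoid the diamond lemma by exhibiting a concrete $2n^2$-dimensional associative algebra with $\k$-basis indexed by these elements together with the evident surjection $\mathrm{Path}_\k(\widehat Q)\twoheadrightarrow\A$; combined with the upper bound this forces equality. Granting linear independence, the final claim is the sum $\dim_\k\A=\sum_{i,j}\dim_\k e_i\A e_j=n\cdot 2+\binom n2\cdot 4=2n^2$.

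I expect the main obstacle to be precisely this last step — confirming there are no hidden relations (the confluence check / construction of the explicit basis) — since the spanning statement really is immediate from the shape of the relations, exactly as the paper indicates.
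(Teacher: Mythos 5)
Your proposal is correct, and it is considerably more careful than the paper, which offers no argument at all: the lemma is declared ``immediate from the definition,'' with only informal remarks (that crossed products such as $e_ix^*ye_i$ vanish and that any path mixing $z$ with another arrow is zero) standing in for the spanning half of the claim. Your upper-bound computation is exactly that intended content, organized cleanly: the block-homogeneity of $I$, the observation that every length-two path is either a generator of $I$ or reduces to $\delta_{a,b}z$, the consequence $z^2=(xx^*)(yy^*)=x(x^*y)y^*=0$, and the even more immediate fact that $z\cdot c$ and $c\cdot z$ for an arrow $c$ between distinct vertices are themselves generators of $I$. Where you genuinely add value is in flagging that the lower bound -- no hidden collapse, so that $\dim_\k e_i\A e_j$ is exactly $2$ rather than at most $2$ -- requires a separate verification; the paper silently assumes this (its parenthetical ``the relations were chosen so that the lemma would hold'' is an assertion, not a proof). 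Your diamond-lemma plan is sound: every length-two word is a reducible left-hand side once $z^2\to 0$ is adjoined, the irreducibles are precisely the claimed basis, and the only ambiguities are length-three overlaps whose confluence is exactly the computation $z\cdot c=0=c\cdot z$ and $z^2=0$ already carried out. Carrying out those few overlap checks explicitly (or exhibiting the $2n^2$-dimensional model, e.g.\ via the quadratic/Koszul duality with the preprojective algebra that the introduction alludes to) would complete a fully rigorous proof; as written your proposal correctly isolates where the real content lies.
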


The left $\A$ module $P_i = \A e_i$ is indecomposable and projective, and the $n$ modules $\{P_i\}_{i=1}^n$ form a complete collection of pairwise distinct isomorphism classes of indecomposable projective left $\A$ modules.  From Lemma \ref{lem:pathspaces}, we see that the endomorphism algebra of $P_i$ is given by
$$
	\mbox{End}_{\A}(P_i,P_i) \cong e_i\A e_i \cong \k[z]/z^2,
$$
While all the other hom spaces $\Hom_{\A}(P_i,P_j) \cong e_i \widehat{A} e_j$ are also two-dimensional vector spaces with a distinguished basis.  

\subsection{Gradings on $\A$} \label{sec:gradings}
In the sections to come we will consider several different non-negative gradings on $\A$.  Of primary importance are
\begin{itemize}
\item the path length grading, given by setting
$$
\deg_{path}(e_i) = 0, \ \ \deg_{path}(z) = 2,
$$
$$
	\deg_{path}(x) = \deg_{path}(x^*) = \deg_{path}(y) = \deg_{path}(y^*) = 1;
$$
\item the orientation grading $\deg_o$ corresponding to an orientation $o$ of the graph $Q$.  A choice of orientation $o$ of $Q$ is a choice of head and tail for each edge of $Q$, and choosing such an orientation gives us a quiver $Q_o$.  The quiver $Q_o$ gives rise to a grading on $\A$ by setting, for $i<j$,
$$
	\deg_o(x) = \begin{cases}
	1, \text{ if the } x \text{ edge of } Q \text{ between }i \text{ and }j \text{ goes from } i\to j \text{ in } Q_o;\\
	0, \text{ otherwise.}
	\end{cases}
$$
$$
\deg_o(y) = \begin{cases}
	1, \text{ if the } y \text{ edge of } Q \text{ between }i \text{ and }j \text{ goes from } i\to j \text{ in } Q_o;\\
	0, \text{ otherwise.}
	\end{cases}
$$
$$
	\deg_o(x^*) = 1-\deg_o(x);
$$
$$
 	\deg_o(y^*) = 1-\deg_o(y).
$$
$$
	\deg_o(z) = 1.
$$
\end{itemize}
These all non-negative integral gradings on the algebra $\A$.  For an orientation $o$ of $Q$, let $o^\vee$ be the opposite orientation.
The degree 0 subalgebras of $\A$ for various gradings can be described as follows.
\begin{itemize}
\item When endowed with the path-length grading, the degree $0$ subalgebra of $\A$ is semi-simple and isomorphic to $\k^n = \mbox{span}_{\k}\{e_1,\dots,e_n\}$.
\item When endowed with the $o$-orientation grading, the degree $0$ subalgebra of $\A$ is isomorphic to the quotient of the path algebra $\mbox{Path}_\k(Q_{o^\vee})$ by the two-sided ideal generated by all length two paths in $Q_{o^\vee}$.
\end{itemize}

We introduce special notation for the two specific orientations that will play an important role later:
\begin{itemize}
\item  Let $\widetilde{o}$ denote the orientation of $Q$ obtained by orienting one edge (the $x$ edge) between $i$ and $j$ as an arrow $i \to j$ and orienting the other edge (the $y$ edge) of $Q$ as an arrow $j \to i$.  We denote by $\widetilde{Q} = Q_{\widetilde{o}}$ the associated quiver, given by endowing $Q$ with the orientation $\widetilde{o}$.  The quiver $\widetilde{Q}$ for $n=4$ is drawn below.  $$
\begin{tikzpicture}
  \tikzset{EdgeStyle/.append style = {->}}
  \SetGraphUnit{5}
  \Vertex{2}
  \NOWE(2){1}
  \NOEA(2){3}
	\NOEA(1){4}
  \Edge[label = x](1)(2)
  \Edge[label = y*](2)(1)
  \Edge[label = x](2)(3)
  \Edge[label = y*](3)(2)
  \Edge[label = x](1)(3)
  \Edge[label = y*](3)(1)
 \Edge[label = x](1)(4)
  \Edge[label = y*](4)(1)
 \Edge[label = x](3)(4)
  \Edge[label = y*](4)(3)
 \Edge[label = x](2)(4)
  \Edge[label = y*](4)(2)
\end{tikzpicture}
$$
The edge labels on the above quiver specify the natural embedding of  $\widetilde{Q}$ as a sub quiver of the Ginzburg double $\widehat{Q}$.  In the zigzag algebra $\A$, the edges of the Ginzburg double $\widehat{Q}$ which are in the image of this embedding (the $x$ and $y^*$ edges) will have $\widetilde{o}$-degree 1 in the zigzag algebra $\A$, whereas the dual edges ($x^*$ and $y$) will have $\widetilde{o}$-degree 0.

\item Let $\vec{o}$ denote the orientation of $Q$ such that, for $i<j$, both edges between $i$ and $j$ are oriented to point $i\to j$ (thus the quiver $\vec{Q} = Q_{\vec{o}}$ has no oriented cycles). The quiver $\vec{Q}$ for $n=4$ is drawn below.
$$
\begin{tikzpicture}
  \tikzset{EdgeStyle/.append style = {->}}
  \SetGraphUnit{5}
  \Vertex{2}
  \NOWE(2){1}
  \NOEA(2){3}
	\NOEA(1){4}
  \Edge[label = x](1)(2)
   \Edge[label = x](2)(3)
 
  \Edge[label = x](1)(3)

 \Edge[label = x](1)(4)
 \Edge[label = x](3)(4)

 \Edge[label = x](2)(4)

    \tikzset{EdgeStyle/.append style = {<-}}
     \Edge[label = y](2)(1)
      \Edge[label = y](3)(2)
  \Edge[label = y](3)(1)
    \Edge[label = y](4)(1)
      \Edge[label = y](4)(3)
       \Edge[label = y](4)(2)

\end{tikzpicture}
$$
The edge labels on the above quiver specify the embedding of  $\vec{Q}$ as a sub quiver of the Ginzburg double $\widehat{Q}$.  In the zigzag algebra $\A$, the edges of the Ginzburg double $\widehat{Q}$ which are in the image of this embedding (the $x$ and $y$ edges) will have $\vec{o}$-degree 1, whereas the dual edges ($x^*$ and $y^*$) will have $\vec{o}$-degree 0.

\end{itemize}

\subsection{Categories of complexes}
We denote by $\D$ the homotopy category of bounded complexes of finitely-generated projective $\A$-modules.  An object of $\D$ is a bounded complex of finitely generated projective modules
$$
	Y = (Y^m,\partial^m),\ \ \partial^m:Y^m \rightarrow Y^{m+1},\ \  \partial^{m+1}\circ \partial^m = 0.
$$
A morphism $f$ from $X$ to $Y$ is a collection of $\A$ module maps $f^i:X^i \rightarrow Y^i$ intertwining the differentials.  Two maps $f,g:X\rightarrow Y$ are equal in $\D$ if they are homotopic when regarded as maps of chain complexes.
We let $[k]$ denote the auto-equivalence of $\D$ which shifts a complex $k$ degrees to the left: 
$$
	Y[k]^m = Y^{k+m}, \ \ \ \partial_{Y[k]} = (-1)^k\partial_Y. 
$$
The pair $(\D,[1])$ is a finitely-generated, $\k$-linear triangulated category.

Given a map of complexes $f : X\rightarrow Y$, the cone of $f$ is the complex $X[1] \oplus Y$ with the differential 
$$
	\partial (x,y) = (-\partial_{X}(x),f(x)+\partial_{Y}(y)).
$$

In the sequel we will often endow $\A$ with one of the non-negative gradings defined earlier, in which case we will be interested in the homotopy category of bounded complexes of finitely-generated projective {\it graded} $\A$-modules.  Since different gradings will be used in different sections, we will state explicitly at the beginning of each section which grading we are considering.  Abusing notation slightly, we will still use $\D$ to denote the homotopy category of bounded complexes of finitely-generated graded modules in each of these sections; $[k]$ always denotes a shift in homological degree; we will always denote by $\la k \ra$ the shift in the relevant internal grading.

\subsection{Minimal complexes}
Let $Y\in \D$ be an indecomposable complex, so that 
$$Y \cong Y_1\oplus Y_2 \in \D \implies Y_1\cong 0\text{ or }Y_2\cong 0.
$$  
Let $\tilde{Y}$ be a representative of $Y$ in the $\k$-linear category of bounded complexes of projective $\A$ modules.  In other words, $\tilde{Y}$ is a bounded complex of $\A$ modules which is homotopy equivalent to $Y$, but not necessarily isomorphic to $Y$ as a chain complex. We say that $\tilde{Y}$ is a {\it minimal complex} for $Y$ if $\tilde{Y}$ is indecomposable in the additive category of bounded complexes of projective $\A$ modules.  In particular, if $\tilde{Y}$ is a minimal complex for $Y$, then when regarded as a complex of projective $\A$ modules, $\tilde{Y}$ has no contractible summands.  Moreover, the chain groups of a minimal complex are themselves determined up to isomorphism as projective $\A$ modules.  We thus refer to $\tilde{Y}$ as {\it the} minimal complex of $Y$, with the understanding that the chain groups of $\tilde{Y}$ are only determined up to isomorphism.  When $Y$ is not necessarily indecomposable, the minimal complex of $Y$ is defined to be the direct sum of the minimal complexes of the indecomposable summands of $Y$.  As every complex $Y$ is homotopy equivalent to a minimal complex, in some arguments it will be convenient to study $Y$ by assuming it is already minimal.

\subsection{Slicing complexes} \label{sec:slice}
The goal of this section is to briefly describe two ways to use a grading on $\A$ to ``slice" objects of $\D$ into homogeneous pieces.  The first way, which is very well known, is via a bounded $t$-structure.  We assume some familiarity with the notion of a $t$-structure here, and refer the reader to \cite{GelfandManin} for further information.  For now, it suffices to note that a bounded $t$-structure on $\D$ defines a collection of abelian subcategories $\{\D^k\}_{k\in \Z}$.  The subcategory $\D^0$ is called the {\it heart} of the $t$-structure, and a bounded $t$-structure on $\D$ is determined by its heart.  Moreover, a collection of full $\k$-linear subcategories $\{\D^k\}_{k\in \Z}$ determines a $t$-structure provided the following conditions hold:
\begin{itemize}
\item [(a)] for all $k\in \Z$, $\D^k[1] = \D^{k+1}$,
\item[(b)]if $X\in \D^k$ and $Y\in \D^l$ with $k>l$, then $\Hom_\D(X,Y)=0$, and
\item[(c)]
for every nonzero object $E\in\D$ there are integers $m<n$ and a collection
of triangles
\[
\xymatrix@C=.2em{ 0_{\ } \ar@{=}[r] & E(m) \ar[rrrr] &&&& E(m+1) \ar[rrrr]
\ar[dll] &&&& E(m+2) \ar[rr] \ar[dll] && \ldots \ar[rr] && E(n-1)
\ar[rrrr] &&&& E(n) \ar[dll] \ar@{=}[r] &  E^{\ } \\
&&& Y(m+1) \ar@{-->}[ull] &&&& Y(m+2) \ar@{-->}[ull] &&&&&&&& Y(n)
\ar@{-->}[ull] }
\]
with $Y(k)\in\D^k$ for all $k$.
\end{itemize}
We will refer to the terms $Y(k)$ appearing above as the {\it t-slices} of the object $E$.

A non-negative grading on the algebra $\A$ gives rise to a bounded $t$-structure on the homotopy category of graded $\A$ modules $\D$ whose heart is the category of {\it linear complexes of projectives.}  By definition, a linear complexes of projective modules is a complex $Y\in \D$ such that the term $Y^k$ in homological degree $k$ is a direct sum of projective modules $\oplus P_j\la k \ra$, so that the homological degree matches the internal grading shift on the nose. Thus gradings on $\A$ give rise to $t$-structures, and hence to ``slicings" of complexes of graded projective $\A$ modules.  We refer the reader to \cite{MOS} for a detailed discussion of linear complexes of projective modules over finite dimensional non-negatively-graded algebras.  

Besides using $t$-structures, there is another somewhat simpler way to slice an object of $\D$ into homogeneous pieces, which is to ignore the homological degree completely and slice the minimal complex of $E$ using only an internal grading on $\A$-modules.  The resulting decomposition of can be formalised in the language of {\it baric structures}, introduced by Achar and Treumann in \cite{AcharTreumann}.  

In more detail, fix an orientation $o$ of $Q$ and let $\D$ denote the homotopy category of complexes of $o$-graded $\A$-modules, where $\A$ is endowed with the $o$-grading defined in Section \ref{sec:gradings}.  For $k\in \Z$, let $\D_k$ denote the full subcategory of $\D$ consisting of complexes $Y = (Y^m,\partial^m)$ such that, in the minimal complex of $Y$, the chain groups are direct sums of projective modules whose internal grading shift is exactly $k$ in every homological degree:
$$
	Y^m \cong \oplus_i P^{\oplus n_i}_i\la k \ra \text{ for all } m\in \Z.
$$
The subcategories $\{\D_k\}$ define a 
baric structure in the sense of \cite{AcharTreumann}.  Since the baric structures we consider are essentially just gradings on projective $\A$-modules, rather than give the full abstract definition of a baric structure on a triangulated category here, we prefer to simply note that the subcategories $\{\D_k\}$ defined above satisfy the following properties (all of which follow directly from properties of the non-negative grading on $\A$):

\begin{itemize}
\item [(a)] for all $k\in \Z$, $\D_k[1] = \D_k$,
\item[(b)]if $X\in \D_k$ and $Y\in D_l$ with $k>l$, then $\Hom_\D(X,Y)=0$, and
\item[(c)]
for every non-zero object $E\in\D$ there are integers $m<n$ and a collection
of triangles
\[
\xymatrix@C=.2em{ 0_{\ } \ar@{=}[r] & E(m) \ar[rrrr] &&&& E(m+1) \ar[rrrr]
\ar[dll] &&&& E(m+2) \ar[rr] \ar[dll] && \ldots \ar[rr] && E(n-1)
\ar[rrrr] &&&& E(n) \ar[dll] \ar@{=}[r] &  E_{\ } \\
&&& Y(m+1) \ar@{-->}[ull] &&&& Y(m+2) \ar@{-->}[ull] &&&&&&&& Y(n)
\ar@{-->}[ull] }
\]
with $Y(k)\in\cal \D_k$ for all $k$.
\end{itemize}
We refer to the subcategories $\{\D_k\}$ induced from the $o$-grading on $\A$ as the {\it $o$-baric structure} on $\D$, and to the objects $Y(k) \in \D_k$ appearing above as the $o$-{\it baric slices} of the object $E$.  

Given an interval $[k,l]\subset \Z$, we set 
$\D_{[k,l]}$ to be the full subcategory of $\D$ consisting of those complexes $Y$ whose non-zero baric slices live in $\D_m$ for $k\leq m \leq l$.  Similarly, $\D^{[k,l]}$ is the full subcategory of $\D$ whose non-zero $t$-slices live in $\D^m$ for $k\leq m \leq l$.  The subcategories $\D_{\leq k}$, $\D_{\geq k}$, $\D^{\leq k}$, $\D^{\geq k}$ are defined similarly.

We have chosen to use the same notation $Y(k)$ for both baric slices and for $t$-slices; with the exception of a brief point in the last section of paper we will on make use of baric slices, and in any case it will always be made clear in each section which structures -- baric or $t$ -- we are considering.  Despite some similarity in the properties shared by baric and $t$-structures, there are important differences between the two notions.  For instance, in a $t$-structure, the heart $\D^0$ is an abelian subcategory of the triangulated category $\D$, and this subcategory is not preserved by the homological shift $[1]$; by contrast, the baric heart $\D_0$ will not in general be an abelian subcategory of $\D$, though in the cases we consider it will be closed under homological shifts.

\subsection{Top and bottom slices}
Fix an orientation $o$ of $\D$, and let $Y\in \D$.  Using the $o$-baric slices $\{Y(k)\}_{k\in \Z}$, we may define integers
$\phi_{+}(Y)$ by
$$
	\phi_{+}(Y) = sup \{k\in\Z : Y(k) \neq 0\} 
$$
and $\phi_{-}(Y)$ by
$$
	\phi_{-}(Y) = inf \{k\in\Z : Y(k) \neq 0\}.
$$
As the minimal complex of $Y$ is a bounded complex of finitely-generated graded $\A$-modules, the chain groups of the minimal complex are all supported in finitely many $o$-degrees, so that slices $Y(k)$ will be non-zero for only finitely many $k$.  We may also define complexes $Y(\phi_+)$ and $Y(\phi_-)$ -- the {\it top and bottom slices of $Y$} -- by
$$
	Y(\phi_+) := Y(\phi_{+}(Y) ), \ \ 
	Y(\phi_-) := Y(\phi_{-}(Y)).
$$
These top and bottom slices will play an essential role in the ping pong and dual ping pong constructions in latter sections.

\subsection{The functors $\Sig_i$}\label{sec:functors}
In this section we define our main object of interest, the 2-representation
$$	
	\Psi: F_n \longrightarrow \mbox{Aut}(\D).
$$

Define homomorphisms $f_i,g_i$, $1\leq i \leq n$ of $(\A,\A)$ bimodules
$$
	f_i: \A e_i\otimes_{\k} e_i\A \longrightarrow \A,
$$
$$
	g_i: \A \longrightarrow \A e_i\otimes_\k e_i\A
$$
by
$$
	f_i : e_i\otimes e_i \mapsto e_i,
$$
$$
	g_i: 1 \mapsto z\otimes e_i + e_i \otimes z + \sum x\otimes x^* + x^*\otimes x + y\otimes y^* + y^*\otimes y,
$$
where $z$ denotes the loop at $i$ and the sum is over all $x,x^*,y,y^*$ edges out of vertex $i$.

Let $\Sig_i$ denote the two-term complex of $(\A,\A)$ bimodules
$$
\Sig_i:= 	\A \xrightarrow{g_i} \A e_i \otimes_\k e_i\A,
$$
with the term $\A$ sitting in homological degree $0$.  Let $\Sig_i^{-1}$ denote the two-term complex of $(\A,A)$ bimodules
$$
	\Sig_i^{-1} := \A e_i \otimes_\k e_i \A \xrightarrow{f_i} \A,
$$
with the term $\A$ sitting in homological degree $0$.
The functors
$$
	Y \mapsto \Sig_i\otimes_{\A} Y, \ \ \ Y \mapsto \Sig_i^{-1}\otimes_{\A} Y,
$$ 
are endofunctors of $\D$; we somewhat abusively denote these endofunctors by $\Sig_i$, $\Sig_i^{-1}$, too.

The proof of the following proposition follows exactly as in Proposition 2.4 of \cite{KhovanovSeidel} (see also \cite{HuerfanoKhovanov}).
\begin{proposition}\label{prop:2-rep}
For each $i = 1,\dots,n$, the functors $\Sig_i$ and $\Sig_i^{-1}$ are mutually inverse equivalences of $\D$, i.e., there are functor isomorphisms
$$
	\Sig_i \Sig_i^{-1} \cong 1_\D \cong \Sig_i^{-1} \Sig_i.
$$
Thus the assignment
$$
	\Psi: F_n \longrightarrow \mbox{Aut}(\D), \ \ \ \sigma_i \mapsto \Sig_i, \ \ \ \s_i^{-1}\mapsto \Sig_i^{-1},
$$
is a (weak) 2-representation of the free group $F_n$.
\end{proposition}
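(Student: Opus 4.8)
The proposition asserts two things: that $\Sigma_i$ and $\Sigma_i^{-1}$ are mutually inverse (so in particular each is an auto-equivalence of $\mathcal{D}$), and that consequently $\Psi$ is a weak 2-representation of $F_n$. The second part is essentially formal: once each $\Sigma_i$ is an auto-equivalence, the assignment $\sigma_i \mapsto \Sigma_i$, $\sigma_i^{-1}\mapsto \Sigma_i^{-1}$ extends to a homomorphism from the free group $F_n$ to $\mathrm{Aut}(\mathcal{D})$ precisely because $F_n$ is free — there are no relations to check beyond $\sigma_i \sigma_i^{-1} = \sigma_i^{-1}\sigma_i = 1$, which is exactly the content of $\Sigma_i\Sigma_i^{-1}\cong 1_{\mathcal{D}}\cong \Sigma_i^{-1}\Sigma_i$. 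So the real work is the first claim, and the plan is to follow the Khovanov--Seidel argument (Proposition 2.4 of \cite{KhovanovSeidel}, cf. \cite{HuerfanoKhovanov}) adapted to the zigzag algebra $\widehat{A}$ of the doubled complete graph.

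First I would compute the composite complex of bimodules $\Sigma_i \otimes_{\widehat{A}} \Sigma_i^{-1}$. This is a four-term total complex built from the two-term complexes $\widehat{A}\xrightarrow{g_i}\widehat{A}e_i\otimes_\k e_i\widehat{A}$ and $\widehat{A}e_i\otimes_\k e_i\widehat{A}\xrightarrow{f_i}\widehat{A}$; tensoring and totalizing gives a complex whose terms are $\widehat{A}e_i\otimes e_i\widehat{A}$, then $(\widehat{A}e_i\otimes e_i\widehat{A})\oplus(\widehat{A}e_i\otimes e_i\widehat{A}e_i\otimes e_i\widehat{A})$, then $\widehat{A}e_i\otimes e_i\widehat{A}$, with differentials assembled from $f_i$, $g_i$, and the identity-type maps. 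The key local computation is that $e_i\widehat{A}e_i \cong \k[z]/z^2$, so $\widehat{A}e_i\otimes_\k e_i\widehat{A}e_i\otimes_\k e_i\widehat{A} \cong (\widehat{A}e_i\otimes_\k e_i\widehat{A})^{\oplus 2}$ (indexed by $e_i$ and $z$). Using this, together with the explicit formula for $f_i\circ g_i$ — one checks $f_i(g_i(1))$ lands in $e_i\widehat{A}e_i$ and equals a nonzero multiple of $z$ plus lower-order terms, or more precisely that the relevant composite along the middle of the total complex is an isomorphism onto a direct summand — one exhibits the total complex as homotopy equivalent (Gaussian elimination of the contractible summands) to the identity bimodule $\widehat{A}$ concentrated in degree $0$. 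The same computation with the roles of $f_i$ and $g_i$ swapped handles $\Sigma_i^{-1}\otimes_{\widehat{A}}\Sigma_i \cong \widehat{A}$.

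The main obstacle, and the only place where the doubled complete graph differs substantively from the simply-laced setting of \cite{KhovanovSeidel}, is verifying that $g_i$ is genuinely a bimodule map and that the composites $f_ig_i$ and $g_if_i$ behave as expected, which forces one to use the precise defining relations of $\widehat{A}$ — in particular the relations $e_i a^* b e_i = \delta_{a,b} e_i z e_i$ for $a,b\in\{x,y,x^*,y^*\}$ and the vanishing of length-two paths between distinct vertices. Concretely: that $g_i(1) = z\otimes e_i + e_i\otimes z + \sum (x\otimes x^* + x^*\otimes x + y\otimes y^* + y^*\otimes y)$ is central enough that $a\cdot g_i(1) = g_i(1)\cdot a$ for all arrows $a$ out of (or into) vertex $i$ — this is where the ``$2$-dimensional hom space'' structure of Lemma \ref{lem:pathspaces} does its job, since there are now two edges ($x$ and $y$) in each direction rather than one, and one must see that the symmetrized sum over both $x$- and $y$-edges is exactly what makes $g_i$ well-defined. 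Once that bookkeeping is done, the cancellation of contractible summands in the total complexes is purely formal.

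After establishing $\Sigma_i\Sigma_i^{-1}\cong 1_{\mathcal{D}}\cong \Sigma_i^{-1}\Sigma_i$, I would conclude by the universal property of the free group: since $F_n$ has presentation $\langle \sigma_1,\dots,\sigma_n\mid\ \rangle$ with no relations, any assignment of the generators $\sigma_i$ to invertible objects (here, auto-equivalences) of a monoidal category (here, $(\mathrm{Aut}(\mathcal{D}),\circ)$, or rather the monoidal category of $\widehat{A}$-bimodule complexes acting on $\mathcal{D}$) extends uniquely to a monoidal functor, i.e. to a weak action; the invertibility of each $\Sigma_i$ is exactly what we have just verified, so $\Psi$ is a well-defined weak 2-representation of $F_n$. (As remarked in the introduction, one can upgrade this to a genuine action by choosing the coherence isomorphisms $\alpha_{g,h}$, but for the weak statement of the proposition nothing further is needed.)
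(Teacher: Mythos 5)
Your proposal is correct and follows essentially the same route as the paper, which simply defers to Proposition 2.4 of Khovanov--Seidel (and Huerfano--Khovanov): form the total complex of $\Sig_i\otimes_{\A}\Sig_i^{-1}$, split the middle term via $e_i\A e_i\cong\k[z]/z^2$, cancel the contractible summands by Gaussian elimination, and then invoke the universal property of the free group. The only substantive checks — that $g_i(1)$ is central for the bimodule structure and that the identity components of the differential really do appear — are exactly the relations-based verifications you identify, so nothing is missing.
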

An important point which emerges in the proof of the above proposition is that the functor isomorphisms 
$$
	\Sig_i \Sig_i^{-1} \cong 1_\D \cong \Sig_i^{-1} \Sig_i
$$
will not hold if $\D$ is replaced by the additive category of complexes of finitely-generated projective $\A$-modules; these isomorphisms only hold after passing to the homotopy category.

When we regard $\A$ as a graded algebra, the bimodule map $g_i$ used in the definition of the auto-equivalence $\Sig_i$ is not a map of degree 0, but rather a map of degree $1$ for the $o$-grading for an orientation $o$ and a map of degree 2 for the path-length grading.  Thus, in order to define auto-equivalences of the homotopy category of graded modules, we need to add a grading shift to one of the bimodules used in the definition of $\Sig_i$.  When we consider $\A$ as a graded algebra with the $o$-grading coming from an orientation, the auto-equivalence $\Sig_i$ will be defined as tensoring with the complex 
$$
	\A \xrightarrow{g_i} (\A e_i \otimes_\k e_i\A)\la 1 \ra,
$$
while when we consider $\A$ as a graded algebra with the path-length grading, $\Sig_i$ will be given by tensoring with
$$
	\A \xrightarrow{g_i} (\A e_i \otimes_\k e_i\A)\la 2 \ra.
$$
With these additional grading shifts, the statement of Proposition \ref{prop:2-rep} also holds  when $\Sig_i$ and $\Sig_i^{-1}$ are regarded as endofunctors of the homotopy category of graded projective $\A$ modules. 

We record here basic example calculations of the complexes $\Sig_i(P_j)$, taking into account the different possible gradings.
\begin{itemize}\item If we endow $\A$ with the symmetric $\widetilde{o}$ grading and let $P_i$ denote the $\widetilde{o}$-graded projective $\A$ module with $e_i$ sitting in $\widetilde{o}$-degree 0, then for $i\neq j$, 
$$
	\Sig_i(P_j) \cong \big( P_j \rightarrow P_i\la 1 \ra\oplus  P_i\big) \text{ and } \Sig_i(P_i)\cong P_i\la 1 \ra [-1].
$$
\item If we endow $\A$ with the ordered $\vec{o}$-grading and now let $P_i$ denote the $\vec{o}$-graded projective $\A$ module with $e_i$ sitting in degree 0, then 
$$
	\Sig_i(P_i)\cong P_i\la 1 \ra [-1];
$$
for $i> j$,
$$
	\Sig_i(P_j) \cong \big(P_j \rightarrow P_i\la 1 \ra \oplus P_i\la 1 \ra\big);
$$
for $i<j$,
$$
	\Sig_i(P_j) \cong \big(P_j \rightarrow P_i\oplus P_i\big).
$$
\item If we endow $\A$ with the path-length grading and let $P_i$ denote the path-graded projective $\A$ module with $e_i$ sitting in degree 0,
then we have
$$
	\Sig_i(P_j) \cong \big(P_j \rightarrow P_i \la 1 \ra \oplus P_i\la 1 \ra \big) \text{ and } \Sig_i(P_i)\cong P_i\la 2 \ra [-1]
$$
for all $i\neq j$.
\end{itemize}

%%%%%%%%%%%%%%%%%%%%%%%%%%%%%%%%%%%%%%%%%%%%%%%%%%%%%%%%%%%%%%%%%%%%%
\section{2-linearity via ping pong and the $\widetilde{o}$-grading on $\A$}\label{sec:pingpong}
In this section, we prove Theorem \ref{thm:main}, which says that the group homomorphism
$$
\psi: F_n\longrightarrow [\mbox{Aut}(\D)]
$$
is injective.  {\it In the entirety of this section, we endow all algebras, modules, and complexes with the $\widetilde{o}$-grading determined by the symmetric orientation $\widetilde{o}$ on the doubled complete graph $Q$.}  In particular, in this section $\D$ will denote the homotopy category of bounded complexes of projective $\widetilde{o}$-graded 
$\A$ modules. The main ingredients in the proof of Theorem \ref{thm:main} are the symmetric orientation $\widetilde{o}$, the top and bottom $\widetilde{o}$-baric slices 
$Y(\phi_+),Y(\phi_-)$ of $Y\in\D$ defined in Section \ref{sec:slice}, and the ping pong Lemma \ref{lem:pingpong}.

\subsection{The sets $X_i^{\pm}$} 
For $i\in I$, we set 
$$
	\P_i = \bigoplus_{k\in \Z} P_i[k].
$$
Thus, for $m\in \Z$, the projective module $\P_i\la m\ra$ is the direct sum of all possible shifts of $P_i$ which lie in the subcategory $\D_m$ for the $\widetilde{o}$ baric structure on $\D$.  Technically, the module $\P_i$ is not an object of $\D$, as it is not homotopy equivalent to a bounded complex of finitely-generated modules.  Nevertheless, the summands of $\P_i$ are all objects of $\D$, and for $Y\in \D$, the hom space
$$
	\Hom(Y, \P_i) = \bigoplus_{k\in \Z} \Hom(Y, P_i[k])
$$
is a well-defined finite-dimensional vector space.

The following is the key definition in the proof of Theorem \ref{thm:main}.
For $i=1,\dots,n$, we define subsets $X_i^{+}$ and $X_i^-$ of objects of $\D$ by declaring that
\begin{enumerate}
\item $X_i^+$ consists of those indecomposable complexes $Y$ such that
\begin{itemize}
\item the top slice $Y(\phi_+)$ is isomorphic to a direct sum of shifts of $P_i$;
\item $\Hom(Y, \P_j\la \phi_-(Y)\ra) = 0  \iff j= i$;
\end{itemize}

\item $X_i^-$ consists of those indecomposable complexes $Y$ such that
\begin{itemize}
\item The bottom slice $Y(\phi_-)$ is isomorphic to a direct sum of shifts of $P_i$;
\item $\Hom(\P_j\la \phi_+(Y)\ra,Y ) = 0  \iff j= i.$
\end{itemize}
\end{enumerate}

We now check that the sets $X_i^\pm$ and the points $z_i=P_i$ satisfy the assumptions of the ping pong Lemma \ref{lem:pingpong}.

\begin{proposition}\label{prop:pingpong} We have the following:
\begin{enumerate}
\item for all $j\neq i$, $\Sig_i^{\pm1} (P_j) \in X_i^{\pm}$;
\item the sets $X_1^-,\dots,X_n^-,X_1^+,\dots,X_n^+$ are pairwise disjoint;
\item for all $i\neq j$, $Y\in X_j^-\implies \Sig_i(Y) \in X_i^+$;
\item for all $i\neq j$,  $Y\in X_j^+\implies \Sig_i^{-1}(Y) \in X_i^-$;
\item for all $i,j$, $Y\in X_j^+\implies \Sig_i(Y) \in X_i^+$;
\item for all $i,j$, $Y\in X_j^-\implies \Sig_i^{-1}(Y) \in X_i^-$
\end{enumerate}
\end{proposition}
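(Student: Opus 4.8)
The plan is to verify the six conditions by carefully tracking the $\widetilde{o}$-baric slices of complexes under the functors $\Sig_i^{\pm 1}$. The starting point is the explicit computation, recorded at the end of Section~\ref{sec:functors}, that $\Sig_i(P_i) \cong P_i\la 1\ra[-1]$ and, for $i\neq j$, $\Sig_i(P_j)$ is the two-term complex $\big(P_j \to P_i\la 1\ra \oplus P_i\big)$ with $P_j$ in one homological degree and $P_i\la 1\ra \oplus P_i$ in the adjacent one. Dually, $\Sig_i^{-1}(P_i) \cong P_i\la -1\ra[1]$ and $\Sig_i^{-1}(P_j) \cong \big(P_i\la -1\ra \oplus P_i \to P_j\big)$. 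The key structural fact I would isolate first is a lemma describing how $\Sig_i$ acts on an arbitrary minimal complex $Y$ in terms of its baric slices: because the bimodule $\Sig_i$ has the form $\A \to \A e_i \otimes e_i\A\la 1\ra$, tensoring replaces each projective summand $P_j\la m\ra$ of $Y$ (for $j\neq i$) by $\big(P_j\la m\ra \to P_i\la m+1\ra \oplus P_i\la m\ra\big)$ and each $P_i\la m\ra$ by $P_i\la m+1\ra[-1]$, and then one passes to the minimal complex. From this one reads off that $\phi_+(\Sig_i Y) = \phi_+(Y) + 1$ in all the relevant cases (the new $P_i$-summands in top internal degree $\phi_+(Y)+1$ survive minimalization generically, and one must check they are not cancelled — this is where the hypothesis that $Y$ lies in one of the $X_j^\pm$ is used), while $\phi_-(\Sig_i Y) = \phi_-(Y)$.

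For item (1), one computes directly: $\Sig_i(P_j)$ has top slice $\Sig_i(P_j)(\phi_+) \cong P_i\la 1\ra$ (a shift of $P_i$), giving the first bullet of $X_i^+$; for the second bullet, the bottom slice sits in degree $0$ and equals $P_j$, and one checks $\Hom(\Sig_i(P_j), \P_l) = 0$ exactly when $l = i$ by using $\Sig_i$-invariance of Hom together with $\Sig_i^{-1}(P_l)$ computations, or more directly from the path-space Lemma~\ref{lem:pathspaces} applied to the two-term complex. The analogous check with $\Sig_i^{-1}$ handles $X_i^-$. For item (2), disjointness of the $X_j^\pm$: two complexes in $X_i^+ \cap X_{i'}^+$ would need top slices that are simultaneously sums of shifts of $P_i$ and of $P_{i'}$, forcing $i = i'$; the sets $X_i^+$ and $X_j^-$ are distinguished by whether the "$\Hom = 0$" condition is tested at the bottom slice (against $\P_j$ on the right) or at the top slice (against $\P_j$ on the left), and one argues these cannot both hold by a degree/parity bookkeeping on where the extremal slices live. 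Items (3)–(6) are the substance: for (5), given $Y \in X_j^+$ with $j$ arbitrary, apply the structural lemma — the top slice of $\Sig_i(Y)$ becomes a sum of shifts of $P_i$ (the old top slice of $Y$, a sum of shifts of $P_j$, gets pushed to degree $\phi_+(Y)+1$ and replaced by $P_i$-summands when $j\neq i$, or is already a shift of $P_i$ when $j = i$), verifying the first bullet of $X_i^+$; the bottom slice is unchanged, so the second bullet ($\Hom(\Sig_i Y, \P_l\la\phi_-\ra) = 0 \iff l = i$) follows from $\Hom(\Sig_i Y, \P_l) \cong \Hom(Y, \Sig_i^{-1}\P_l)$ plus the computation of $\Sig_i^{-1}(P_l)$, which shifts the $\widetilde{o}$-degree and swaps the roles of $P_i$ and $P_l$ appropriately. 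Items (3), (4), (6) are proved in the same way, with (3) and (4) being the mixed statements where the top and bottom conditions interchange (here one uses $i\neq j$ essentially, since $\Sig_i$ sends a bottom-$P_j$ to a top-$P_i$).

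The main obstacle I anticipate is the \emph{minimalization step}: after tensoring $Y$ with $\Sig_i$, the resulting complex is generally not minimal, and contractible summands of the form $P_i\la m\ra \xrightarrow{\sim} P_i\la m\ra$ may appear and cancel precisely those $P_i$-summands one needs to control the extremal slices. One must show that under the hypotheses defining $X_j^\pm$ — in particular the condition $\Hom(Y, \P_j\la\phi_-\ra) = 0 \iff j = i$, which controls how the differential of $Y$ interacts with maps to $P_i$ — no such cancellation occurs in the top (resp. bottom) baric degree, so that $\phi_+(\Sig_i Y) = \phi_+(Y) + 1$ with top slice a sum of shifts of $P_i$, and the bottom slice and its Hom-vanishing property are preserved. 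Making this cancellation analysis precise, uniformly across cases (1)–(6), is the technical heart of the argument; everything else is bookkeeping with Lemma~\ref{lem:pathspaces} and the adjunction $\Sig_i^{-1} \dashv \Sig_i$ (equivalently, $\Sig_i$-invariance of Hom-spaces in the triangulated category).
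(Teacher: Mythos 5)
Your overall strategy matches the paper's in outline (direct computation for (1), top-slice analysis plus adjunction for the rest), but two of the points you defer or wave at are precisely where the substance of the proof lies, and as written the proposal does not close them.

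First, the disjointness $X_i^+\cap X_j^-=\emptyset$ is not a ``degree/parity bookkeeping'' statement; there is no parity obstruction separating the two conditions. The actual argument is that if $Y\in X_i^+$ had a direct summand of its bottom slice isomorphic to some $P_k\la\phi_-(Y)\ra$, one could build a nonzero element of $\Hom(Y,\P_i\la\phi_-(Y)\ra)$ by taking an $\widetilde{o}$-degree-$0$ edge (or the idempotent) out of that summand, extended by zero, and certifying that it is not null-homotopic by composing with the dual morphism to produce the loop $z\neq 0$; this contradicts the Hom-vanishing clause in the definition of $X_i^+$. Without some such construction your proposal has no mechanism for ruling out $Y\in X_i^+\cap X_j^-$.

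Second, and more seriously, the clause $\Hom(\Sig_i(Y),\P_k\la\phi_-\ra)=0\iff k=i$ in items (3)--(6) has an easy direction and a hard direction. Adjunction handles $k=i$ (the Hom space lands in $\D_{<\phi_-(Y)}$ and vanishes), but for $k\neq i$ you must prove the Hom space is \emph{nonzero}, and adjunction plus the computation of $\Sig_i^{-1}(P_k)$ only reduces this to showing $\Hom\bigl(Y,(\P_i\rightarrow\P_k)\la\phi_-(Y)\ra\bigr)\neq 0$. Establishing that requires exhibiting an explicit chain map out of the rightmost chain group of the bottom slice of the minimal complex of $Y$ and ruling out all null-homotopies, with a case split according to whether that chain group contains a summand $P_l\la\phi_-(Y)\ra$ with $l\neq i$ or consists entirely of copies of $P_i\la\phi_-(Y)\ra$; the second case in turn uses the fact (from the disjointness argument above) that the bottom slice of $Y\in X_j^+$ has no projective direct summands in the homotopy category. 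You correctly flag ``the minimalization step'' as the technical heart, but you frame it as a cancellation analysis of the tensor product $\Sig_i\otimes Y$, whereas what is actually needed is this direct non-vanishing computation of Hom spaces; and in any case you leave it unresolved, so the proposal is a plan rather than a proof.
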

\begin{proof}
The fact that $\Sig_i(P_j) \in X_i^+$ when $j\neq i$ follows by direct computation, as
$$
	\Sig_i(P_j) \cong \big( P_j \rightarrow P_i\la 1 \ra\oplus  P_i\big),
$$
and the properties required for membership in $X_i^+$ are easily checked for this complex.
Similarly, when $j\neq i$, it is straightforward to see that $\Sig_i^{-1}(P_j)\in X_i^-$.  It is also clear from the definition of the sets $X_i^+$ that 
$X_i^+\cap X_j^+ = \emptyset$ when $i\neq j$, since the top slice of a complex in $X_i^+$ is a sum of shifts of $P_i$ and the top slice of a complex in $X_j^+$ is a sum of shifts of $P_j$.  Similarly, $X_i^-\cap X_j^- = \emptyset$ when $i\neq j$.  

We now show that $X_i^+ \cap X_j^- = \emptyset$.  To see this, note that by definition complexes in $X_k^-$ have a shift of $P_k$ as a direct summand in the bottom $\widetilde{o}$-baric slice.  Thus it suffices to note that if $Y\in X_i^+$, then the bottom slice
$Y(\phi_-)$ does not have any direct summands isomorphic to a shift of $P_k$ for any $k=1,\dots,n$.  
For suppose that $Y(\phi_-)$ has a direct summand $N \cong P_k \la \phi_-(Y) \ra$, which for simplicity we assume is in homological degree 0.  Since
$$\Hom(P_k\la\phi_-(Y)\ra,\P_i\la \phi_-(Y)\ra) \cong \Hom(P_k,\P_i) \neq 0,$$ 
it follows that $\Hom(Y,\P_i\la\phi_-(Y)\ra) \neq 0$, too.  To see this, let $0\neq f\in \Hom(Y,\P_i\la\phi_-(Y)\ra)$ be the map on $N$ is given by a $\widetilde{o}$-degree 0 edge of the quiver, extended by 0 to all of $Y$. (The restriction of $f$ to $N$ is a scalar multiple of either an $x^*$, a $y$, or an $e_i$, depending on whether $i<k$, $i>k$, or $i=k$.) Let $f^* \in \Hom(P_i\la\phi_-(Y)-1\ra, Y)$ denote the dual morphism (either an $x$, a $y^*$, or a $z$) from $P_i\la \phi_-(Y)\ra$ into the summand $N$.  Since 
$$
	f\circ f^* = z\in  \Hom(P_i\la \phi_-(Y) - 1 \ra ,\P_i \la \phi_-(Y)\ra )
$$ 
is nonzero, it follows that $f\neq 0$.  But this implies that $Y\notin X_i^+$.  Thus we have shown that the sets $$X_1^-,\dots,X_n^-,X_1^+,\dots,X_n^+$$ are pairwise disjoint.  This completes the proof of $(1)$ and $(2)$.

We now show statement $(5)$:
$$
	Y\in X_j^+ \implies \Sig_i(Y)\in X_i^+.
$$

To see that $\Sig_i(Y)(\phi_+)$ is isomorphic to a sum of shifts of $P_i$, it suffices to note that $$\phi_+(\Sig_i(Y))>\phi_+(Y),$$ 
as the only complex that can appear as an $\widetilde{o}$-baric slice of $\Sig_i(Y)$ in a level greater than $\phi_+(Y)$ is a direct sum of
shifts of $P_i$.  To show $\phi_+(\Sig_i(Y))>\phi_+(Y)$, note that by assumption $Y(\phi_+)$ is isomorphic to a direct sum of shifts of $P_j$.  Let 
$M_j$ be such a summand, which for simplicity we assume lives in homological degree 0.  Now
$$
	\Hom(P_i\la \phi_+(Y) \ra , P_j \la \phi_+(Y) \ra ) \cong \Hom(P_i,P_j)
$$
is clearly non-zero, and if $0 \neq f\in \Hom(P_i\la \phi_+(Y) \ra , P_j \la \phi_+(Y) \ra )$, the map $f' : P_i\la \phi_+(Y) \ra \rightarrow Y$ which maps into $M_j$ via $f$ cannot be homotopic to 0.  It follows that
$$
	\Hom(P_i\la \phi_+(Y)\ra, Y) \neq 0,
$$
and by adjunction
$$
	\Hom(P_i \la \phi_+(Y)+1\ra[-1],\Sig_i(Y)) \cong \Hom(P_i\la \phi_+(Y)\ra, Y) \neq 0,
$$
too.  This shows that $\phi_+(\Sig_i(Y))>\phi_+(Y)$, so that the top level of $\Sig_i(Y)$ is isomorphic to a direct sum of shifts of $P_i$.

We must now show that $\Sig_i(Y)$ satisfies the second requirement for membership in $X_i^+$, namely, that 
$$\Hom(\Sig_i(Y),\P_k\la\phi_-(\Sig_i(Y))\ra = 0 \iff k=i.$$  To this end, note that 
$\Hom(Y,\P_j\la\phi_-(Y)\ra) = 0$ by assumption.  Thus the bottom slice $Y(\phi_-)$ does not have a summand of the form $P_k\la\phi_-(Y)\ra$ for any $k$.  In particular, we have $\phi_-(\Sig_i(Y)) = \phi_-(Y)$, since these two numbers would be different precisely when the bottom slice of $Y$ is a direct sum of shifts of $P_i$.  Since $\phi_-(\Sig_i(Y)) = \phi_-(Y)$, we must show that
$$\Hom(\Sig_i(Y),\P_k\la\phi_-(Y)\ra = 0 \iff k=i.$$  
For this we check the cases $k=i$ and $k\neq i$ separately.  
\begin{itemize}
\item For $k=i$, we have
$$	
\Hom(\Sig_i(Y),\P_i\la\phi_-(Y)\ra = \Hom(Y,\Sig_i^{-1}\P_i\la\phi_-(Y)\ra) = \Hom(Y,\P_i\la\phi_-(Y) - 1\ra) = 0,
$$
since there are no nonzero maps from $Y$ to any complex in $\D_{<\phi_-(Y)}$.  

\item Now suppose $k\neq i$.  We need to show that $\Hom(\Sig_i(Y),\P_k\la\phi_-(Y)\ra) \neq 0$.  We have
$$
\Hom(\Sig_i(Y),\P_k\la\phi_-(Y)\ra \cong \Hom(Y,\Sig_i^{-1}(\P_k)\la\phi_-(Y)\ra)
\cong \Hom(Y,(\P_i \la -1 \ra \oplus \P_i \rightarrow \P_k)\la\phi_-(Y)\ra).
$$
There are no non-zero $\A$-module maps -- much less chain maps -- from the minimal complex of $Y$ to the lowest term $\P_i\la-1+\phi_-(Y)\ra$ above; from this we see that
$$
	\Hom(\Sig_i(Y),\P_k\la\phi_-(Y)\ra\cong \Hom(Y,(\P_i\rightarrow \P_k)\la\phi_-(Y)\ra),
$$
so we must show that the right hand side is non-zero.  We do this by finding an explicit non-zero element in this hom space.
We consider the the minimal complex of the bottom slice $Y(\phi_-)$ of Y:
$$
	Y(\phi_-) = \big( \dots Y_{s-1} \rightarrow Y_s \rightarrow 0 \rightarrow \dots \big),
$$
so that $Y_s$ is the rightmost non-zero term in the minimal complex of the bottom slice.

Suppose first that the chain group $Y_s$ has a summand $M_l\cong P_l\la \phi_-(Y) \ra$ with $l\neq i$, and write $Y_s \cong M_l\oplus Y_s'$.  Let $f$ be a degree 0 morphism (either an $x^*$ or $y$) from $M_l$ to $P_i\la \phi_-(Y) \ra$.  We extend $f$ by 0 to a map on chain groups:
$$\begin{tikzpicture}
  \matrix (m) [matrix of math nodes, row sep=3em, column sep=3em]
    { Y_{s-2} & Y_{s-1}  & M_l \oplus Y_s' & 0 & 0 \\
      0 & 0 & P_i\la \phi_-(Y) \ra & P_k \la \phi_-(Y) \ra & 0  \\ };
  { [start chain] \chainin (m-1-1);
    \chainin (m-1-2);
    { [start branch=A] \chainin (m-2-2)
        [join={node[right,labeled] {0}}];}
    \chainin (m-1-3) [join={node[above,labeled] {\partial_{s-1}}}];
    
    { [start branch=B] \chainin (m-2-3)
        [join={node[right,labeled] {f\oplus 0}}];}
        
          { [start branch=B] \chainin (m-2-2)
        [join={node[right,labeled] {h_1 }}];}

    \chainin (m-1-4) [join={node[above,labeled] {\partial_{s}}}];
    { [start branch=C] \chainin (m-2-4)
        [join={node[right,labeled] {0}}];}
        
           { [start branch=C] \chainin (m-2-3)
        [join={node[right,labeled] {h_2 }}];}
        
    \chainin (m-1-5); }
  { [start chain] \chainin (m-2-1);
    \chainin (m-2-2);
    \chainin (m-2-3) [join={node[above,labeled] {0}}];
    \chainin (m-2-4) [join={node[above,labeled] {d}}];
    \chainin (m-2-5); }
\end{tikzpicture}
$$
Since the differential $d$ above is of $\widetilde{o}$-baric degree 0 and is an $x^*$ or a $y$, the relations in the algebra $\A$ imply that the composition of $d\circ f$ is zero; by the same reasoning, the composition $f\circ \partial_{s-1} = 0$.  Thus the map $f$ may be regarded a well-defined element of $\Hom(Y,(\P_i\rightarrow \P_k)\la\phi_-(Y)\ra)$.  But, as $Y_s$ is the rightmost term in the bottom slice of the minimal complex, there is no possible homotopy $h = (h_1,h_2)$ between $f$ and the 0 map.  Thus in this case $\Hom(Y,(\P_i\rightarrow \P_k)\la\phi_-(Y)\ra) \neq 0$.

On the other hand, suppose that $Y_s \cong P_i^{\oplus m}\la \phi_-(Y) \ra$.  We choose a single summand from $Y_s$ and denote it by $N_i$, so that $Y_s \cong N_i\oplus Y_s'$.  Let $g$ be the degree 0 map from $N_i$ to $P_k\la \phi_-(Y)\ra$ (so $g$ is either an $x^*$ or a $y$).  This $g$ extends to a chain map from $Y$ to $(\P_i\rightarrow \P_k)\la\phi_-(Y)\ra$, which we claim cannot be homotopic to 0.  
$$\begin{tikzpicture}
  \matrix (m) [matrix of math nodes, row sep=3em, column sep=3em]
    { Y_{s-2} & Y_{s-1}  & N_i \oplus Y_s' & 0 & 0 \\
      0 & P_i\la \phi_-(Y) \ra & P_k \la \phi_-(Y) \ra & 0 & 0  \\ };
  { [start chain] \chainin (m-1-1);
    \chainin (m-1-2);
    { [start branch=A] \chainin (m-2-2)
        [join={node[right,labeled] {0}}];}
    \chainin (m-1-3) [join={node[above,labeled] {\partial_{s-1}}}];
    
    { [start branch=B] \chainin (m-2-3)
        [join={node[right,labeled] {g\oplus 0}}];}
        
          { [start branch=B] \chainin (m-2-2)
        [join={node[right,labeled] {h = h'\oplus h''}}];}

    \chainin (m-1-4) [join={node[above,labeled] {\partial_{s}}}];
    { [start branch=C] \chainin (m-2-4)
        [join={node[right,labeled] {0}}];}

    \chainin (m-1-5); }
  { [start chain] \chainin (m-2-1);
    \chainin (m-2-2);
    \chainin (m-2-3) [join={node[above,labeled] {d}}];
    \chainin (m-2-4) [join={node[above,labeled] {0}}];
    \chainin (m-2-5); }
\end{tikzpicture}
$$
%Now, for any morphism $a:M \rightarrow P_i$ of $\A$ modules, $a\neq 0 \iff e_ia \neq 0$, and for any morphism $b: P_i \rightarrow M$, $b=0\iff be_i = 0$.  
To see this, note first that the only possible components to use for a homotopy 
$h = (h',h'')$ must be scalar multiples of the $\widetilde{o}$-degree 0 idempotent $e_i$.
Now, $Y_s'$ is isomorphic to a direct sum of $P_i\la \phi_-(Y)\ra$'s, $h''$ is a map of $\A$ modules whose matrix entries are scalar multiples of the idempotent $e_i$, and the differential $d$ satisfies $$d\circ e_i = d \neq 0.$$  From this we see that $d\circ h'' = 0 \implies h'' = 0$.
But once $h'' = 0$, the homotopy relations imply that we must have $h' \circ \partial_{s-1}=0$, too.  Thus the composition of $\partial_{s-1}$ with the projection $Y_s \rightarrow N_i$ must be 0, and $N_i$ is a direct summand of the bottom slice $Y(\phi(-))$ of $Y$ in the homotopy category.  But, as we have observed earlier, if $Y\in X_j^+$, then the bottom slice of $Y$ cannot have any summands isomorphic to $P_k\la \phi_-(Y) \ra$ for any $k$.  Thus we conclude that the homotopy $h$ cannot exist, and therefore that $$\Hom(Y,(\P_i\rightarrow \P_k)\la\phi_-(Y)\ra) \neq 0.$$

\end{itemize}
This concludes the check of $(5)$.

We now show $(3)$, namely, that $Y\in X_j^-$ and $j\neq i$ implies $\Sig_i(Y)\in X_i^+$.
As before, in order to show that the top slice of $\Sig_i(Y)$ is a direct sum of shifts of $P_i$, it suffices to note that 
$\phi_+(\Sig_i(Y))>\phi_+(Y)$.  Since
$$
	\Hom(\P_i \la\phi_+(Y)+1\ra,\Sig_i(Y)) \cong \Hom(\P_i\la\phi_+(Y)\ra, Y) \neq 0
$$
by assumption, we must have $\phi_+(\Sig_i(Y))>\phi_+(Y)$.
What remains is to show that
$$\Hom(\Sig_i(Y), \P_k\la\phi_-(\Sig_i(Y)\ra) = 0  \iff k= i.$$
But this is clear: by assumption $Y(\phi_-)$ is isomorphic to a direct sum of shifts of $P_j$, and it follows immediately (e.g. from the example computations in Section \ref{sec:functors}) that
$\Sig_i(Y)(\phi_-)$ is isomorphic to a direct sum of shifts of the complex $(P_j\rightarrow P_i)$.  Since maps from $\Sig_i(Y)$ to objects of $D_{\leq \phi_-(\Sig_i(Y))}$ are determined completely by maps out of the bottom slice $\Sig_i(Y)(\phi_-)$, it follows that
$\Hom(\Sig_i(Y), \P_k\la\phi_-(\Sig_i(Y)\ra)$
is isomorphic to a direct sum of hom spaces of the form
$$
	\Hom((P_j\rightarrow P_i)\la\phi_-(\Sig_i(Y)\ra, \P_k\la\phi_-(\Sig_i(Y)\ra) \cong \Hom((P_j\rightarrow P_i), \P_k)
$$
But  now one can see directly using the relations in the algebra $\A$ that the hom space on the right is zero if and only if $k=i$.
The remaining claims $(4)$ and $(6)$ in the proposition now follow by symmetric arguments.
\end{proof}

As a corollary, we obtain Theorem \ref{thm:main} from the introduction.
\begin{corollary}\label{cor:faithful}
The group homomorphism
$$
	\psi: F_n \longrightarrow [\mbox{Aut}(\D)]
$$
from the free group to the isomorphism classes of auto-equivalences of $\D$, is injective.
\end{corollary}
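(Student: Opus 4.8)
The plan is to invoke Klein's ping pong Lemma \ref{lem:pingpong} with the subsets $X_i^{\pm}$ and the objects $P_i$ just introduced. Assume $n\geq 2$; the case $n=1$ is immediate, since then $F_1\cong\Z$ is sent by $[\psi]$ to the cyclic subgroup generated by $[\Sig_1]$, which has infinite order in $[\mbox{Aut}(\D)]$ (indeed $\Sig_1^k(P_1)\cong P_1[-k]$ up to internal grading shift). For $n\geq 2$, take $Y$ to be the set of isomorphism classes of indecomposable objects of $\D$. Each $\Sig_i$ is an auto-equivalence of $\D$ by Proposition \ref{prop:2-rep}, hence carries indecomposables to indecomposables and restricts to a bijection of $Y$; the subsets $X_i^{\pm}$ of the previous subsection are subsets of $Y$; and for the distinguished points we take $y_i := [P_i]$. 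The assignment $\s_i\mapsto\Sig_i$ defines a group homomorphism $F_n\to\{\text{bijections of }Y\}$ precisely because $\Psi$ is a weak $2$-representation, so that isomorphic functors induce the same permutation of $Y$.

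It then remains to verify the three hypotheses of Lemma \ref{lem:pingpong}, all of which are packaged in Proposition \ref{prop:pingpong}. Pairwise disjointness of $X_1^{\pm},\dots,X_n^{\pm}$ is part $(2)$. That $y_i=[P_i]$ lies in $Y\setminus X$, where $X=\bigcup_i(X_i^+\cup X_i^-)$, follows from Lemma \ref{lem:pathspaces}: the one-term complex $P_i$ has $\phi_+(P_i)=\phi_-(P_i)=0$ with bottom and top slice equal to $P_i$ itself, while $\dim\Hom_{\A}(P_i,P_j)=2$ for every $j$ gives $\Hom(P_i,\P_j)\cong\Hom_{\A}(P_i,P_j)\neq 0$ and $\Hom(\P_j,P_i)\neq 0$ for all $j$; hence $P_i$ fails the hom-vanishing clause required for membership in any $X_j^+$ or $X_j^-$. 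The requirement $g_i^{\pm 1}(y_j)\in X_i^{\pm}$ for $j\neq i$ is part $(1)$. Finally, the ping pong dynamics $x\in X\setminus X_i^{\mp}\implies g_i^{\pm 1}(x)\in X_i^{\pm}$ is exactly parts $(3)$--$(6)$: parts $(5)$ and $(3)$ together say that $\Sig_i$ carries $X\setminus X_i^-=(\bigcup_j X_j^+)\cup(\bigcup_{j\neq i}X_j^-)$ into $X_i^+$, and symmetrically parts $(6)$ and $(4)$ say that $\Sig_i^{-1}$ carries $X\setminus X_i^+$ into $X_i^-$.

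With the hypotheses in hand, Lemma \ref{lem:pingpong} gives that $F_n\to\{\text{bijections of }Y\}$, $\s_i\mapsto\Sig_i$, is injective. Since this homomorphism factors through $[\mbox{Aut}(\D)]$ --- an isomorphism class of auto-equivalence determines a well-defined permutation of $Y$ --- the homomorphism $\psi\colon F_n\to[\mbox{Aut}(\D)]$ is injective, which is the assertion of the corollary and of Theorem \ref{thm:main}. I do not anticipate a real obstacle at this stage: all of the substance is in Proposition \ref{prop:pingpong}, and the only points worth a sentence of care are the mild set-theoretic one --- that $\P_i$ is not itself an object of $\D$, so one argues with the honest bijections of $Y$ and with the finite-dimensional hom spaces $\Hom(Y,\P_i)$ --- and the bookkeeping that matches the ping pong hypothesis on $g_i$ and $g_i^{-1}$ with implications $(3)$--$(6)$.
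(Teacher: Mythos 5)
Your proposal is correct and is exactly the argument the paper intends: the corollary is deduced by feeding Proposition \ref{prop:pingpong} into the ping pong Lemma \ref{lem:pingpong} with $y_i=[P_i]$ and the sets $X_i^{\pm}$, and then noting that isomorphic auto-equivalences induce the same permutation of isomorphism classes, so injectivity descends to $[\psi]$. The paper leaves these bookkeeping steps (including the check that $P_i\notin X$ and the $n=1$ case) implicit, and you have supplied them correctly.
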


We have proven that $F_n$ acts faithfully on the homotopy category of {\it graded} projective $\A$ modules, where the grading is the one induced by the $\widetilde{o}$ orientation of $Q$.   Of course, one may ask about the faithfulness of the $F_n$ action on the homotopy category of graded modules for one of our other gradings, or about the faithfulness of the action on the homotopy category of ungraded projective modules.  In fact Corollary \ref{cor:faithful} implies that $F_n$ acts faithfully on all of these different triangulated categories.  The main point is that while not all complexes of ungraded $(\A,\A)$ bimodules have graded lifts, the minimal complexes of ungraded $(\A,\A)$ bimodules of the form $\Psi_g$ do all have canonical graded lifts, at least for the gradings we are considering.  From this, one can show directly that if $\Psi_g$ is homotopic to $\A$ as a complex of ungraded bimodules, then there must actually exist an graded homotopy between $\Psi_g$ and $\A$.  Thus faithfulness on the homotopy category of $\widetilde{o}$-graded projective modules implies faithfulness on the homotopy category of ungraded projective modules (and therefore on the homotopy category of graded projective modules for any of our other gradings.)

%%%%%%%%%%%%%%%%%%%%%%%%%%%%%%%%%%%%%%%%%%%%%%%%%%%%%%%%%%%%%%%%%%%%%
\section{The Bessis monoid, the $\vec{o}$-grading on $\A$, and dual ping pong}\label{sec:dualpingpong}
In this section we describe the relationship between the Bessis monoid $F_n^+$ of $F_n$ and the triangulated category $\D$.
The basic ingredients are the 2-representation $\Psi$ and the $\vec{o}$-grading on the zigzag algebra $\A$.  

To begin, we explain how the Hurwitz action of the Artin braid group $B_n$ appears naturally from the point of view of the 2-representation $\Psi$.  Our description is very similar to the well-known appearance of $B_n$ in the theory of exceptional sequences of representations of quivers \cite{Crawley-Boevey}.  Actions of $B_n$ on spherical collections in triangulated categories have also been considered before, see for example \cite{BridgelandCY} and references therein.  Our main goal here is to directly relate these kinds of actions to the Hurwitz action of $B_n$ on reduced reflection expressions of $\gamma = \s_1\dots \s_n$.

In this section the grading shift $\la \bullet \ra$ will denote a shift in the $\vec{o}$-grading coming from the orientation $\vec{o}$ of  $Q$, and $\D$ will denote the homotopy category of bounded complexes of finitely-generated $\vec{o}$-graded projective modules.  Given $Y\in \D$, $Y(k)$ will denote the $\vec{o}$-baric slice of $Y$.

\subsection{The Hurwitz action of the braid group on spherical collections}\label{sec:Hurwitzspherical}
One special feature of complexes in $\D$ of the form $\Psi_g(P_i)$ is that their endomorphism algebras 
are isomorphic to $\k[z]/z^2$, the homology ring of the sphere.  For this reason, we refer to these objects as {\it spherical}.
 \footnote{The terms {\it spherical object} and {\it spherical twist} are due to Seidel-Thomas \cite{SeidelThomas}, who define a spherical object to be an object $E$ such that $\oplus_k\Hom(E,E[k])$ is isomorphic to the homology ring of a sphere. The notion of spherical object we use here is quadratic dual to theirs. In particular, if we choose to work instead with the corresponding simple modules over the quadratic dual of $\A$, these simple modules will be spherical in the original sense of Seidel-Thomas.}If $E \cong \Psi_g(P_k)$ is spherical, we denote by $\Phi_{E} \cong \Psi_g \Sig_k \Psi_g^{-1}$ the associated {\it spherical twist,} which is an auto-equivalence of $\D$.

A {\it spherical collection} in $\D$ is a collection 
$$
	\{E_1,E_2,\dots,E_n\}
$$
of finitely-many spherical objects $E_i$.  The auto-equivalence group of $\D$ acts on the set of spherical collections, with an auto-equivalence $\Phi$ acting by
$$
	\Phi\cdot\{E_1,\dots,E_n\} = \{\Phi(E_1),\dots,\Phi(E_n)\}.
$$
It is sometimes convenient to fix an order $(E_1,E_2,\dots,E_n)$ on the objects in a spherical collection; after doing so the Artin braid group $B_n$ then acts on the set of spherical collections, where the action of the generator $\tau_i\in B_n$ is defined by
$$
	\tau_i (E_1,\dots,E_i,E_{i+1},\dots,E_n) = (E_1,\dots, \Phi_{E_{i}}(E_{i+1}),E_i\dots,E_n).
$$
(See, for example, \cite{BridgelandCY}).  We denote the set of ordered spherical collections in $\D$ by
$\cal O_\D$, and, in anticipation of Lemma \ref{lem:o-Hurwitz2}, we refer to this $B_n$ action as the {\it Hurwitz action of the braid group on $\cal O_\D$}.  An interesting question is to describe the orbits of $B_n$ on ordered spherical collections in $\D$; in particular, it is desirable to find orbits where the $B_n$ action is free.
It is also desirable to find conditions on the morphism spaces between objects in a collection which are preserved by the Hurwitz action.  This motivates the following definition.

\begin{definition}
An {\bf $\vec{o}$-spherical collection} in $\D$ is an $n$-tuple
$$
	(E_1,E_2,\dots,E_n)
$$
such that 
\begin{itemize}
\item each $E_i$ is spherical;
\item each $E_i\in \D_0$, where $\D_0$ denotes the heart of the $\vec{o}$-baric structure on $\D$ (see Section \ref{sec:slice});
\item for $i<j$ and $l\in \Z$,
$$
	\Hom(E_i,E_j\la k\ra[l])\neq 0 \implies k = 1;
$$
\item for $i>j$ and $l\in \Z$,
$$
	\Hom(E_i,E_j\la k\ra[l])\neq 0 \implies k = 0;
$$
\end{itemize}
\end{definition}
The basic example of an $\vec{o}$-spherical collection is 
$$
	(P_1,P_2,\dots, P_n).
$$

The proof of the following lemma is almost immediate from the definitions.
\begin{lemma}\label{lem:o-Hurwitz1}
The Hurwitz action of $B_n$ preserves the set of $\vec{o}$-spherical collections.
\end{lemma}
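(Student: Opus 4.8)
The plan is to show that each of the four defining conditions of an $\vec{o}$-spherical collection is preserved by a single Hurwitz generator $\tau_i$, since these generators generate $B_n$ and the inverse $\tau_i^{-1}$ is handled symmetrically. So suppose $(E_1,\dots,E_n)$ is an $\vec{o}$-spherical collection and set $(E_1',\dots,E_n') = \tau_i(E_1,\dots,E_n)$, so that $E_i' = \Phi_{E_i}(E_{i+1})$, $E_{i+1}' = E_i$, and $E_k' = E_k$ otherwise.

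First I would dispatch the sphericality condition: $E_i' = \Phi_{E_i}(E_{i+1})$ is a spherical object because $\Phi_{E_i}$ is an auto-equivalence of $\D$ and auto-equivalences preserve endomorphism algebras, so $\bigoplus_l \Hom(E_i',E_i'[l]) \cong \bigoplus_l \Hom(E_{i+1},E_{i+1}[l]) \cong \k[z]/z^2$; all other $E_k'$ are unchanged. Next, the condition $E_k'\in \D_0$: for $k\neq i$ this is inherited, and for $E_i' = \Phi_{E_i}(E_{i+1})$ one uses the explicit triangle defining the spherical twist, $\Phi_{E_i}(E_{i+1})$ fitting into a triangle whose outer terms are $E_{i+1}$ and a direct sum of shifts of $E_i$ tensored with $\Hom^\bullet(E_i,E_{i+1})$; since $E_i,E_{i+1}\in\D_0$ and (by the third bullet of the definition, using $i<i+1$) any nonzero $\Hom(E_i,E_{i+1}\la k\ra[l])$ forces $k=1$, the cone lands in $\D_{[0,?]}$ — here one must be slightly careful and track the $\vec{o}$-grading of the mapping cone, but the baric structure's property (b) together with the grading constraint pins the outcome to $\D_0$. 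This is the step I expect to require the most care, and is the main obstacle: one must verify that twisting by $E_i$ does not spread the baric support, which hinges precisely on the two $\Hom$-vanishing conditions in the definition being exactly calibrated to the $\vec{o}$-grading shift built into $\Sig_i$.

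Finally I would check the two $\Hom$-conditions for the reordered tuple. The only genuinely new morphism spaces to examine are those involving $E_i' = \Phi_{E_i}(E_{i+1})$ in position $i$ and $E_{i+1}' = E_i$ in position $i+1$. Using the triangle for $\Phi_{E_i}(E_{i+1})$ and the long exact sequence in $\Hom(-, E_j\la k\ra[l])$ and $\Hom(E_j\la k\ra[l], -)$, every relevant morphism space is built from the morphism spaces $\Hom(E_i, E_j\la k\ra[l])$, $\Hom(E_{i+1}, E_j\la k\ra[l])$ and their duals, all of which satisfy the grading constraints by hypothesis; one checks that the constraint indices $i<j$ versus $i>j$ match up correctly after the transposition of positions $i$ and $i+1$ (in particular $E_{i+1}'=E_i$ now sits to the right of $E_i'$, and the pair $(E_i',E_{i+1}')$ must satisfy the ``$i<j$'' constraint, which is exactly the $\Hom(E_i, E_{i+1})$ constraint from the original collection transported through $\Phi_{E_i}$). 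Since the Hurwitz generator $\tau_i$ preserves all four conditions, and the same argument applied to $\tau_i^{-1}$ (using $\Phi_{E_i}^{-1}$ and the twist-by-$E_i$ cotriangle) shows the inverse does too, the full braid group $B_n$ preserves the set of $\vec{o}$-spherical collections.
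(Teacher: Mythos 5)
Your proof is correct and follows exactly the route the paper has in mind: the paper gives no written argument, asserting only that the lemma is ``almost immediate from the definitions,'' and your verification---that a Hurwitz generator $\tau_i$ preserves sphericality, membership in $\D_0$, and the two $\Hom$-grading constraints, with the key point being that the $\la 1\ra$ built into the twist exactly cancels against the $k=1$ concentration of $\Hom(E_i,E_{i+1}\la k\ra[l])$---is precisely the omitted check. The one step you flag as delicate does go through as you suspect, e.g.\ via the adjunction $\Hom(\Phi_{E_i}(E_{i+1}),E_i\la k\ra[l])\cong\Hom(E_{i+1},E_i\la k-1\ra[l+1])$ together with $\Phi_{E_i}^{-1}(E_i)\cong E_i\la-1\ra[1]$.
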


Let $\mathcal{O}^{\vec{o}}_\D(\Psi_\gamma)$ denote the set of $\vec{o}$-spherical collections $(E_1,\dots,E_n)$ such that the composition of their spherical twists is given by the action of the Coxeter element
$$
	\Phi_{E_1}\dots \Phi_{E_n} \cong \Psi_\gamma  \in \mbox{Aut}(\D).
$$  
Recall that $Red(\gamma)$ denotes the set of reduced $\gamma$-reflection expressions for $\gamma\in F_n$.  
A comparison of the definition of the Hurwitz action on $\vec{o}$-spherical collections to that of the Hurwitz action on reduced $\gamma$-reflection expressions for $\gamma$ yields the following.
\begin{lemma}\label{lem:o-Hurwitz2}
The assignment $(\s_1,\dots,\s_n)\mapsto (P_1,\dots,P_n)$ extends uniquely to a map
$$
	f: Red(\gamma)\longrightarrow \mathcal{O}^{\vec{o}}_\D(\Psi_\gamma)
$$
which intertwines the $B_n$ actions.  If $f: (t_1,\dots,t_n) \mapsto (E_1,\dots,E_n)$, then for all $k=1,\dots,n$ the auto-equivalence $\Psi_{t_k}$ is isomorphic to the spherical twist $\Phi_{E_k}$.
\end{lemma}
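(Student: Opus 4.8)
The plan is to obtain the existence and uniqueness of $f$ almost for free from Theorem~\ref{thm:Artin}, and then to spend the real effort checking that its image actually lands in $\mathcal{O}^{\vec{o}}_\D(\Psi_\gamma)$. Since $B_n$ acts simply and transitively on $Red(\gamma)$, every reduced reflection expression is uniquely of the form $\beta\cdot(\s_1,\dots,\s_n)$ for some $\beta\in B_n$; so one simply \emph{defines} $f\big(\beta\cdot(\s_1,\dots,\s_n)\big):=\beta\cdot(P_1,\dots,P_n)$, which is well defined because the Hurwitz action is free, and is manifestly the unique $B_n$-equivariant extension of $(\s_1,\dots,\s_n)\mapsto(P_1,\dots,P_n)$. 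What then remains is to verify that $\mathcal{O}^{\vec{o}}_\D(\Psi_\gamma)$ is a $B_n$-stable set of ordered spherical collections containing $(P_1,\dots,P_n)$, together with the final identification $\Psi_{t_k}\cong\Phi_{E_k}$.

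First I would check that $(P_1,\dots,P_n)$ lies in $\mathcal{O}^{\vec{o}}_\D(\Psi_\gamma)$. That it is an $\vec{o}$-spherical collection is a direct computation: by Lemma~\ref{lem:pathspaces}, $\mbox{End}_\D(P_i)\cong e_i\A e_i\cong\k[z]/z^2$, so each $P_i$ is spherical; $P_i$ is a module concentrated in $\vec{o}$-degree $0$ and hence lies in the baric heart $\D_0$; and since $\Hom_\D(P_i,P_j[l])$ vanishes for $l\neq0$ and is $e_i\A e_j$ for $l=0$, the two $\Hom$-degree conditions reduce to the fact that the $\k$-vector space $e_i\A e_j$ is concentrated in a single $\vec{o}$-degree, namely $1$ when $i<j$ (where $e_i\A e_j=\mbox{span}_\k\{x,y\}$) and $0$ when $i>j$ (where $e_i\A e_j=\mbox{span}_\k\{x^*,y^*\}$). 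Finally $\Phi_{P_k}\cong\Sig_k$, and $\gamma=\s_1\cdots\s_n$, so $\Phi_{P_1}\cdots\Phi_{P_n}\cong\Sig_1\cdots\Sig_n\cong\Psi_\gamma$.

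Next I would show $B_n$-stability of $\mathcal{O}^{\vec{o}}_\D(\Psi_\gamma)$. Stability of the $\vec{o}$-spherical condition is exactly Lemma~\ref{lem:o-Hurwitz1}. Stability of the product-of-twists condition follows from the conjugation identity $\Phi_{\Phi(E)}\cong\Phi\circ\Phi_E\circ\Phi^{-1}$, valid for any auto-equivalence $\Phi$ by the same argument as in Seidel--Thomas adapted to the quadratic-dual notion of sphericity in use: applying it with $\Phi=\Phi_{E_i}$ gives $\Phi_{\Phi_{E_i}(E_{i+1})}\circ\Phi_{E_i}\cong\Phi_{E_i}\circ\Phi_{E_{i+1}}$, so that the ordered composite $\Phi_{E_1}\cdots\Phi_{E_n}$ is unchanged by each generator $\tau_i$ of $B_n$. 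The last assertion is then handled by induction on the length of $\beta\in B_n$ as a word in the $\tau_i^{\pm1}$: for $\beta=e$ we have $t_k=\s_k$, $E_k=P_k$, and $\Psi_{\s_k}\cong\Sig_k\cong\Phi_{P_k}$; for the inductive step, the Hurwitz formula $\tau_i(g_1,\dots,g_n)=(\dots,g_{i-1},g_ig_{i+1}g_i^{-1},g_i,g_{i+2},\dots)$ and the corresponding formula on spherical collections only alter positions $i$ and $i+1$, where $\Psi_{g_i}\cong\Phi_{E_i}$ is unchanged and $\Psi_{g_ig_{i+1}g_i^{-1}}=\Psi_{g_i}\Psi_{g_{i+1}}\Psi_{g_i}^{-1}\cong\Phi_{E_i}\Phi_{E_{i+1}}\Phi_{E_i}^{-1}\cong\Phi_{\Phi_{E_i}(E_{i+1})}$, again by the conjugation identity (the $\tau_i^{-1}$ case is identical).

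The main obstacle is isolating and justifying the single functorial fact that $\Phi_{\Phi(E)}\cong\Phi\circ\Phi_E\circ\Phi^{-1}$ for the variant (quadratic-dual) notion of spherical object used here; once that is in hand, everything else is formal bookkeeping, driven by the simple transitivity of the Hurwitz action from Theorem~\ref{thm:Artin} together with the explicit descriptions of the $\Hom$-spaces in Lemma~\ref{lem:pathspaces} and of the $\vec{o}$-grading.
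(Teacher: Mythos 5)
Your proposal is correct and takes essentially the route the paper has in mind: the paper supplies no written proof at all, dismissing the lemma as following from ``a comparison of the definition of the Hurwitz action on $\vec{o}$-spherical collections to that of the Hurwitz action on reduced $\gamma$-reflection expressions,'' and what you have done is carry out that comparison in full. Defining $f$ on a $B_n$-torsor by transport of structure, noting well-definedness from freeness and uniqueness from transitivity, checking the basepoint $(P_1,\dots,P_n)\in\mathcal{O}^{\vec{o}}_\D(\Psi_\gamma)$ by the explicit $\Hom$-computations, invoking Lemma~\ref{lem:o-Hurwitz1} for stability of the $\vec{o}$-spherical condition, and running the induction with the conjugation identity $\Phi_{\Phi(E)}\cong\Phi\,\Phi_E\,\Phi^{-1}$ is exactly the bookkeeping the author is gesturing at. One small simplification you could have noted: since the paper defines $\Phi_E$ for $E\cong\Psi_g(P_k)$ \emph{by} $\Phi_E:=\Psi_g\Sig_k\Psi_g^{-1}$, the conjugation identity in the only case your induction needs it (namely $\Phi=\Psi_{t_i}$, available from the inductive hypothesis) is nearly tautological --- $\Psi_h(E)\cong\Psi_{hg}(P_k)$ gives $\Phi_{\Psi_h(E)}\cong\Psi_{hg}\Sig_k\Psi_{hg}^{-1}\cong\Psi_h\Phi_E\Psi_h^{-1}$ --- so what you flag as ``the main obstacle'' is, in this paper's specific definitional framework, essentially a rewriting. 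That observation would have let you sidestep any appeal to a Seidel--Thomas-style argument adapted to the quadratic-dual setting.
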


\begin{proposition}\label{prop:o-Hurwitz}
The Hurwitz action of the braid group $B_n$ on the orbit of the spherical collection $(P_1,\dots,P_n)$ is free.  Thus the assignment
$(\s_1,\dots,\s_n)\mapsto (P_1,\dots,P_n)$ extends uniquely to an isomorphism of sets with a $B_n$ action.
\end{proposition}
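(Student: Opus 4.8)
The plan is to transport the freeness and transitivity of the Hurwitz action on $Red(\gamma)$ (Theorem \ref{thm:Artin}) across the equivariant map $f$ of Lemma \ref{lem:o-Hurwitz2}, using the faithfulness Theorem \ref{thm:main} to control the fibres of $f$. First I would note that, since $B_n$ acts transitively on $Red(\gamma)$, since $f$ is $B_n$-equivariant, and since $f(\s_1,\dots,\s_n) = (P_1,\dots,P_n)$, the $B_n$-orbit of $(P_1,\dots,P_n)$ in $\mathcal{O}^{\vec{o}}_\D(\Psi_\gamma)$ is precisely the image $f(Red(\gamma))$. Thus it suffices to prove that $B_n$ acts freely on $f(Red(\gamma))$ and that $f$ is a bijection onto this image; the asserted isomorphism of $B_n$-sets will then be $f$ itself.

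The heart of the argument is injectivity of $f$. Suppose $f(t_1,\dots,t_n) = f(s_1,\dots,s_n) = (E_1,\dots,E_n)$. By the final clause of Lemma \ref{lem:o-Hurwitz2}, for each $k$ both $\Psi_{t_k}$ and $\Psi_{s_k}$ are isomorphic, as auto-equivalences of $\D$, to the spherical twist $\Phi_{E_k}$; hence $\Psi_{t_k} \cong \Psi_{s_k}$. The induced homomorphism $[\psi]\colon F_n \to [\mbox{Aut}(\D)]$ is injective: this is Corollary \ref{cor:faithful}, together with the observation at the end of Section \ref{sec:pingpong} that faithfulness on one of the graded homotopy categories forces faithfulness on all of them, so that it applies to the $\vec{o}$-graded $\D$ in use here. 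Therefore $\Psi_{t_k} \cong \Psi_{s_k}$ gives $t_k = s_k$ in $F_n$ for every $k$, so $(t_1,\dots,t_n) = (s_1,\dots,s_n)$ and $f$ is injective.

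Given injectivity, freeness on the orbit is formal: if $\beta \in B_n$ fixes $f(w)$ for some $w \in Red(\gamma)$, then $f(\beta\cdot w) = \beta\cdot f(w) = f(w)$, so $\beta\cdot w = w$ by injectivity, whence $\beta = 1$ by the freeness half of Theorem \ref{thm:Artin}. Together with transitivity this exhibits $f(Red(\gamma))$ as a $B_n$-torsor and $f$ as a $B_n$-equivariant bijection from the $B_n$-torsor $Red(\gamma)$ onto it. Since a $B_n$-equivariant map out of a torsor is determined by the image of any single point, this simultaneously yields the existence and the uniqueness of the claimed extension of $(\s_1,\dots,\s_n)\mapsto(P_1,\dots,P_n)$ to an isomorphism of sets-with-$B_n$-action.

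I expect the only delicate point to be the appeal to faithfulness in the second paragraph: one must apply Corollary \ref{cor:faithful} in the form valid for the $\vec{o}$-graded category (via the transfer discussed at the end of Section \ref{sec:pingpong}), and one must use that it is the \emph{auto-equivalences} $\Psi_{t_k}$ and $\Psi_{s_k}$ that agree up to isomorphism -- not merely their effect on the single object $P_k$ -- which is exactly what Lemma \ref{lem:o-Hurwitz2} supplies. Once that is in hand, the proposition is a purely formal consequence of the torsor structure on $Red(\gamma)$.
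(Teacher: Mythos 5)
Your proposal is correct and follows essentially the same route as the paper: both arguments transport Artin's freeness theorem for the Hurwitz action on $Red(\gamma)$ through the equivariant map of Lemma \ref{lem:o-Hurwitz2}, and both use the faithfulness Theorem \ref{thm:main} to conclude that distinct $\gamma$-reflections yield non-isomorphic spherical twists, hence non-isomorphic spherical objects. The only cosmetic difference is packaging -- you prove injectivity of $f$ on all of $Red(\gamma)$, while the paper checks directly that the stabilizer of the basepoint $(P_1,\dots,P_n)$ is trivial -- and your remarks about transferring faithfulness to the $\vec{o}$-graded category and about needing the isomorphism of auto-equivalences (not just of the images of $P_k$) are exactly the right points of care.
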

\begin{proof}
Let $1\neq \b \in B_n$, and suppose that $\b(P_1,\dots,P_n) = (E_1,\dots,E_n)$.  We must show that for some $i$, 
$E_i$ is not isomorphic to $P_i$.
To see this, consider the Hurwitz action of $B_n$ on the reduced $\gamma$-reflection expressions for $\gamma$, and let
$$
	\b (\s_1,\dots,\s_n) = (t_1,\dots,t_n).
$$
In particular, in the 2-representation $\Psi$, the reflection $t_i$ acts by the spherical twist $\Phi_{E_i}$.
By Artin's Theorem \ref{thm:Artin}
$$
(t_1,\dots,t_n) = (\s_1,\dots,\s_n) \iff \b = 1.
$$
Thus, if $\b\neq 1$, at least one of the reflections $t_i$ is not equal to $\sigma_i$.  Now, by Theorem \ref{thm:main}, it follows that the spherical twists $\Phi_{E_i}$ and $\Phi_{P_i} = \Sigma_i$ are not isomorphic, and thus $E_i$ is not isomorphic to $P_i$.
\end{proof}

\subsection{Reflection complexes}\label{sec:refcomplex}
Let $t\in F_n^+$ be a $\gamma$-reflection. To $t$ we may associate a complex $\cal C_t$ whose associated twist is $t$.  We call $\cal C_t$ the {\it reflection complex} associated to $t$, and in this section we study the basic properties of reflection complexes.  The most important point is that there is a close connection between the morphism spaces between reflection complexes and the topology of the punctured disc.  The basic features of this connection will be summarized in Proposition \ref{prop:equiv} below.

Let $\b\in B_n$ be a braid such that
$$
	\b \cdot (\sigma_1,\dots, \sigma_n) = (t,x_2,\dots,x_n),
$$
where $B_n$ acts on $n$-tuples of $\gamma$-reflections via the Hurwitz action.
Similarly, there exists a complex $\cal C_t$ with
$$
	\b\cdot (P_1,\dots,P_n) = (\cal C_t, \cal C_{x_2},\dots,\cal C_{x_n}).
$$
The complex $\cal C_t$ is well defined independent of the choice of $\b\in B_n$ used to obtain an $n$-tuple of the form $ (t,x_2,\dots,x_n)$.  The complex $\cal C_t$, which we call the {\it reflection complex} associated to $t$, lives in the heart $\D_0$ of the $\vec{o}$-baric structure on $\D$.

As an example, let $t$ be the $\gamma$-reflection $\s_1\s_2\s_1^{-1}$.  The associated reflection complex is
$$
	\cal C_{\s_1\s_2\s_1^{-1}} \cong \Sig_1(P_2) \cong \big(P_2 \xrightarrow{x^*\oplus y^*} P_1^{\oplus 2}).
$$

An alternative description of the reflection complex $\cal C_t$ is as follows: any $\gamma$-reflection $t$ may be written as $t= g \s_i g^{-1}$ for some $g\in F_n$ and some $i=1,\dots,n$, with $g$ determined uniquely up to multiplication on the right by a power of $\s_i$.  The complex $\Psi_g(P_i)$ will agree up to a grading and homological shift with the reflection complex $\cal C_t$ defined above, where the ``up to a shift" is a result of the ambiguity in the choice of $g$.  The advantage of our first definition above is that it uses the Hurwitz action to fix this choice of $g$ uniformly for all $\gamma$-reflections.  However, for some statements it seems more natural to allow for an ``up to shift" ambiguity in the definition of $\cal C_t$.  In particular, for $t$ a $\gamma$-reflection, we define a complex $\mathfrak{C}_t$ by
$$
	\mathfrak{C}_t = \bigoplus_{k\in \Z} \cal C_t [k].
$$
Thus $\mathfrak{C}_t$ is the direct sum of all homological shifts of $\cal C_t$ which live in the $\vec{o}$-baric heart $\D_0$.  Note that changing the definition of $\cal C_t$ by homological shift does not change the object $\mathfrak{C}_t$.  The complexes $\mathfrak{C}_t$ are not objects of $\D$, though the summands of $\mathfrak{C}_t$ are. In particular, for any $Y\in \D$, the space $\Hom(Y,\mathfrak{C}_t)$ is a well-defined finite dimensional $\k$-vector space.

In the statement of the Proposition \ref{prop:equiv} below, we find it convenient to introduce the notation $\doteq$ by writing, for complexes $Y$ and $B$,
$$
	Y \doteq B \iff \text{ there exists an integer } m \text{ such that } Y\cong B[m]. 
$$
When we have fixed a direct sum decomposition $B \cong B_1\oplus \dots \oplus B_k$, we will also write 
$$
	Y \doteq B_1\oplus \dots \oplus B_k \iff \text{ there exist integers } m_1,\dots,m_k \text{ such that }
	Y \cong B_1[m_1] \oplus \dots \oplus B_k[m_k].
$$
In the special case $\cal C_{\s_i} \cong P_i$, we follow the notation of Section \ref{sec:pingpong} and write $ \mathfrak{P}_i:=\mathfrak{C}_{\s_i}$ (though we remind the reader again that in this section the module $P_i$ is endowed with its $\vec{o}$-grading, while in Section \ref{sec:pingpong} the notations $P_i$ and $\P_i$ were used to denote $\widetilde{o}$-graded objects).

\begin{proposition}\label{prop:equiv}
Let $t$ and $u$ be $\gamma$-reflections.  The following are equivalent:
\begin{enumerate}
\item $tu\in \Be^+$ is a simple Bessis element;
\item $tut^{-1}$ is a $\gamma$-reflection;
\item $u^{-1}t u$ is a $\gamma$-reflection;
\item The connecting paths $f_t$ and $f_u$ have isotopic representatives in the unit disc which intersect only at the base point $x_0$, and such that in a sufficiently small neighbourhood of $x_0$, the curve $f_t$ lies below $f_u$.
\item $\Hom(\cal C_u, \mathfrak{C}_t \la k\ra) = 0$ for $k\neq 0$.
\item $\Hom(\cal{C}_t, \mathfrak{C}_u \la k\ra) = 0$ for $k\neq 1$.
\item $\Psi_t(\cal{C}_u)\cong \cal{C}_{tut^{-1}}\in \D_0$;
\item $\Psi_u^{-1}(\cal{C}_t)\cong \cal{C}_{u^{-1}t u}\in \D_0$;
\item $\Psi_u(\cal{C}_t)\in \D_{[0,1]}$ with $\vec{o}$-baric slices
$$
	\Psi_u(\cal{C}_t)(\phi_+) \doteq \oplus_{i=1}^r \cal{C}_u\la 1 \ra \text{ and } \Psi_u(\cal{C}_t)(\phi_-) \doteq \cal{C}_t;
$$
\item $\Psi_t^{-1}(\cal{C}_u)\in \D_{[-1,0]}$ with $\vec{o}$-baric slices
$$
	\Psi_t^{-1}(\cal{C}_u)(\phi_-)\doteq \oplus_{i=1}^r \cal{C}_t\la -1 \ra \text{ and } \Psi_t^{-1}(\cal{C}_u)(\phi_+) \doteq \cal{C}_u;
$$
\end{enumerate}
\end{proposition}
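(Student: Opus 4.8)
The plan is to prove the list as a web of implications organised in three layers: (i) the equivalences among the purely group-theoretic/topological conditions (1), (2), (3), (4); (ii) the implication from (1) to all of (5)--(10), obtained by transporting everything through the $B_n$-equivariant bijection between $Red(\gamma)$ and $\vec{o}$-spherical collections (Lemmas \ref{lem:o-Hurwitz1}, \ref{lem:o-Hurwitz2}, Proposition \ref{prop:o-Hurwitz}); and (iii) a return trip from the categorical conditions back to (1), using the faithfulness Theorem \ref{thm:main}.

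For layer (i): the Hurwitz generator $\tau_1$ sends a tuple $(t,u,w_3,\dots,w_n)$ to $(tut^{-1},t,w_3,\dots,w_n)$ and $\tau_1^{-1}$ sends it to $(u,u^{-1}tu,w_3,\dots,w_n)$. Once one knows the basic fact that a pair $(t,u)$ of $\gamma$-reflections satisfies $tu\in\Be^+$ \emph{if and only if} $(t,u)$ extends to a length-$n$ reflection expression for $\gamma$ (equivalently, by Theorem \ref{thm:Artin}, lies in the $B_n$-orbit of $(\s_1,\dots,\s_n)$), conditions (2) and (3) follow by applying $\tau_1^{\pm1}$, which preserve $Red(\gamma)$; the converse directions are part of Bessis's description of the dual monoid \cite{Bessis}. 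The quoted basic fact is itself a consequence of the lattice/quasi-Garside structure of $F_n^+$ from \cite{Bessis}: if $tu\le\gamma$ then the complement $\gamma(tu)^{-1}$ lies in $F_n^+$ and has $\gamma$-reflection length $n-2$, hence is a product of $n-2$ $\gamma$-reflections, while any prefix of a reduced reflection expression lies in $[1,\gamma]=\Be^+$. Finally $(1)\Leftrightarrow(4)$ is Bessis's geometric model: the divisibility order on $\Be^+$ is realised by compatible fans of connecting paths based at $x_0$, and $tu\in\Be^+$ corresponds precisely to isotoping $f_t,f_u$ to be disjoint off $x_0$ with $f_t$ entering $x_0$ below $f_u$; I would cite \cite{Bessis}, or else argue directly by reading a reduced factorization off a disjoint-fan picture and conversely.

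For layer (ii): assume (1) and extend $(t,u)$ to $(w_1,w_2,\dots,w_n)\in Red(\gamma)$ with $w_1=t$, $w_2=u$. Choose $\beta\in B_n$ with $\beta\cdot(\s_1,\dots,\s_n)=(w_1,\dots,w_n)$ (Theorem \ref{thm:Artin}); by Lemma \ref{lem:o-Hurwitz2} the tuple $\beta\cdot(P_1,\dots,P_n)=(E_1,\dots,E_n)$ is an $\vec{o}$-spherical collection (Lemma \ref{lem:o-Hurwitz1}) with $E_k=\mathcal{C}_{w_k}$ and $\Phi_{E_k}\cong\Psi_{w_k}$; in particular $E_1=\mathcal{C}_t$, $E_2=\mathcal{C}_u$. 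Now (5) and (6) are exactly the defining $\Hom$-vanishing of an $\vec{o}$-spherical collection for the pair $(E_1,E_2)$ in positions $j>i$ and $i<j$, since $\Hom(\mathcal{C}_u,\mathfrak{C}_t\langle k\rangle)\cong\bigoplus_l\Hom(E_2,E_1\langle k\rangle[l])$ and symmetrically for (6). For (7): $\tau_1\cdot(E_1,\dots,E_n)=(\Phi_{E_1}(E_2),E_1,\dots)$ is again $\vec{o}$-spherical, so $\Psi_t(\mathcal{C}_u)\cong\Phi_{E_1}(E_2)$ lies in $\D_0$; under the $B_n$-equivariant bijection this collection corresponds to $\tau_1(w_1,w_2,\dots)=(tut^{-1},t,\dots)$, whose first term is by definition $\mathcal{C}_{tut^{-1}}$, giving (7); (8) is the mirror statement via $\tau_1^{-1}$. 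For (9) and (10) one feeds the Seidel--Thomas triangle \cite{SeidelThomas} $\Hom^\bullet(\mathcal{C}_u,\mathcal{C}_t)\otimes_\k\mathcal{C}_u\langle1\rangle\to\mathcal{C}_t\to\Psi_u(\mathcal{C}_t)$ (the $\langle1\rangle$ being the graded normalisation of $\Sig_i$ on the $\vec{o}$-graded category, Section \ref{sec:functors}): by (5) the coefficient complex sits in $\vec{o}$-degree $0$, so the cone is assembled from $\mathcal{C}_t\in\D_0$ and homological shifts of $\mathcal{C}_u\langle1\rangle\in\D_1$, whence $\Psi_u(\mathcal{C}_t)\in\D_{[0,1]}$ with bottom slice $\doteq\mathcal{C}_t$ and top slice $\doteq\bigoplus_{i=1}^r\mathcal{C}_u\langle1\rangle$, $r=\dim_\k\Hom^\bullet(\mathcal{C}_u,\mathcal{C}_t)$; (10) is the dual computation with $\Psi_t^{-1}$ and (6). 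Reading the same triangles backwards gives $(9)\Leftrightarrow(5)$ and $(10)\Leftrightarrow(6)$.

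For layer (iii): $(7)\Rightarrow(2)$ and $(8)\Rightarrow(3)$ are immediate --- if $\Psi_t(\mathcal{C}_u)$ is isomorphic to a reflection complex then $\Psi_t\Psi_u\Psi_t^{-1}\cong\Phi_{\Psi_t(\mathcal{C}_u)}$ is the twist in a $\gamma$-reflection, so by faithfulness (Theorem \ref{thm:main}) $tut^{-1}$ is that $\gamma$-reflection. Combining with layers (i), (ii) and the equivalences $(5)\Leftrightarrow(9)$, $(6)\Leftrightarrow(10)$, the whole proposition reduces to the single implication $(5)\Rightarrow(1)$ (equivalently $(6)\Rightarrow(1)$), and I expect this to be the main obstacle. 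By $(1)\Leftrightarrow(4)$, if (1) fails then either $f_t,f_u$ admit disjoint representatives with $f_u$ below $f_t$, or they cannot be made disjoint at all; the first case is excluded by applying the already-established direction to the \emph{swapped} pair $(u,t)$, which forces $\Hom^\bullet(\mathcal{C}_u,\mathcal{C}_t)$ into $\vec{o}$-degree $1$, incompatible with (5) as soon as one notes this space is nonzero (if it vanished, $\Psi_u$ would fix $\mathcal{C}_t$, hence $utu^{-1}=t$ by faithfulness, forcing $t=u$ since every $\gamma$-reflection is primitive in $F_n$). The genuinely delicate case is when the two connecting paths \emph{must} intersect: one needs to show that such a crossing forces $\bigoplus_{k,l}\Hom(\mathcal{C}_t,\mathcal{C}_u\langle k\rangle[l])$ to be supported in at least two internal degrees. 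The natural and, I think, correct tool is a Khovanov--Seidel-type dictionary identifying the bigraded morphism spaces between reflection complexes with a geometric intersection invariant of the underlying arcs in the punctured disc; building that dictionary --- precisely the ``close relationship with the topology of the punctured disk'' advertised in the introduction --- is where the substance of Proposition \ref{prop:equiv} really lies.
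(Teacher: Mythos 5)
Your layered plan matches the paper's overall strategy: (1)--(4) are Bessis' dual-monoid theory, and the forward direction $(1)\Rightarrow(5)\text{--}(10)$ is handled via the $B_n$-equivariant dictionary between $Red(\gamma)$ and $\vec{o}$-spherical collections, just as the paper does (using Lemma \ref{lem:o-Hurwitz2} and the preservation statement of Lemma \ref{lem:o-Hurwitz1}; the Seidel--Thomas triangle for (9)--(10) is fine). Your layer (iii) observations $(7)\Rightarrow(2)$, $(8)\Rightarrow(3)$ via faithfulness are also correct.

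However, the crucial implication $(5)\Rightarrow(1)$ is left as an acknowledged gap: you correctly isolate it as ``where the substance really lies,'' but you only gesture at a Khovanov--Seidel-type geometric intersection dictionary without constructing it, and that dictionary is not available in the paper as cited infrastructure. The paper does not go that route. Instead it gives a direct algebraic argument that is special to the $\vec{o}$-grading: after reducing to $t = \s_1$ (and hence $\mathcal{C}_t = P_1$, $\mathfrak{C}_t = \mathfrak{P}_1$) via the Hurwitz action, it shows that if the minimal complex of $\mathcal{C}_u$ contains a summand $N_1 \cong P_1\langle m\rangle$, then $N_1$ must lie in the kernel of the differential --- because the differential is $\vec{o}$-degree zero and the only $\vec{o}$-degree-zero maps out of $P_1$ are scalar multiples of $e_1$, which cannot occur in a minimal complex. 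One then produces the chain maps $\iota\colon P_1\to\mathcal{C}_u$ and $z_{N_1}\colon \mathcal{C}_u\to P_1\langle 1\rangle$ and notes $z_{N_1}\circ\iota = z \neq 0$ in the homotopy category, so $\Hom(\mathcal{C}_u,\mathfrak{P}_1\langle1\rangle)\neq 0$, contradicting (5). Hence $\mathcal{C}_u$ lies in the triangulated subcategory generated by $P_2,\dots,P_n$, and an inductive Hurwitz-action argument then shows $u$ divides $\gamma' = \s_2\cdots\s_n$, so $\s_1 u = tu$ divides $\gamma$. This bypasses any need to classify how the connecting arcs can intersect; in particular, your ``delicate case'' of arcs forced to cross is handled implicitly, not by a separate geometric argument. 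A secondary issue: your parenthetical claim that if $\Hom^\bullet(\mathcal{C}_u,\mathcal{C}_t)$ vanished then $\Psi_u$ would fix $\mathcal{C}_t$ is unnecessary and slightly off track --- Lemma \ref{lem:pathspaces} shows morphism spaces between indecomposable projectives are always $2$-dimensional, so this vanishing never happens, and the ``primitivity of $\gamma$-reflections'' step you invoke is not established in the paper.
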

\begin{proof}
We only give a sketch of the proof here (see also \cite{Licata}).  The equivalence of the first four claims is immediate from Bessis' work \cite{Bessis}, and does not involve the 2-representation $\Psi$.  The equivalence of the remaining statements with each other and with items $(1),(2),(3),(4)$ can be shown using the Hurwitz action on the set of $\vec{o}$-spherical collections.

For example, we show the equivalence of $(1)$ and $(5)$.  
Recall that if $(\cal C_{t_1},\dots,\cal C_{t_n})\in \mathcal{O}^{\vec{o}}_\D(\Psi_\gamma)$ is a $\vec{o}$-spherical collection of reflection complexes corresponding to a minimal length reflection expression $$\gamma = t_1t_2\dots t_n,$$ then it follows from the definition of a $\vec{o}$-spherical collection that
for $i<j$
$$
	\Hom(\cal{C}_{t_i},\mathfrak{C}_{t_j}\la k\ra)\neq 0 \implies k = 1,
$$
while for for $i>j$
$$
	\Hom(\cal{C}_{t_i},\mathfrak{C}_{t_j}\la k\ra)\neq 0 \implies k = 0.
$$
Now suppose that $tu$ is a simple Bessis element, and $(t,u,x_1\dots x_{n-2})$ be a minimal length $\gamma$-reflection expression for the Coxeter element $\gamma$.  Then there is some braid $\b\in B_n$ such that 
$$
	\b (P_1,P_2,\dots,P_n) \cong (\cal C_t, \cal C_u, \dots,\cal C_{x_{n-2}}).
$$
Since the $B_n$ action preserves the set of $\vec{o}$-spherical collections, it follows that
$$
	\Hom(\cal{C}_u, \mathfrak{C}_t \la k\ra) = 0 \text{ for } k\neq 0,
$$
giving $(5)$.  

To prove the converse statement $(5) \implies (1)$, for concreteness we will assume that $t = \s_1$, so that $\cal C_t = P_1$ and $\mathfrak{C}_t = \mathfrak{P}_1$.  (The case of more general $t$ can be reduced to this one using the Hurwitz action, although one can also give a more direct proof along the lines indicated below for the case $\cal C_t = P_1$.)  So we now suppose that $\Hom(\cal{C}_u, \mathfrak{P}_1 \la k\ra) = 0$ for $ k\neq 0$.  We claim that $\cal C_u$ is in the full triangulated subcategory generated by $\{P_2,\dots,P_n\}$; that is, we claim that no shifts of $P_1$ appear in the chain groups of the minimal complex of $\cal C_u$.  

For suppose that a shift of $P_1$ appears somewhere in the minimal complex of $\cal C_u$; let us denote such a summand of the chain group of the minimal complex of $\cal C_u$ by $N_1$, which for convenience we assume appears in homological degree 0.  The boundary map of the minimal complex of $\cal{C}_u$ does not involve any non-zero scalar multiples of the map $e_1 : P_1 \rightarrow P_1$, as any appearance of $e_1$  would result in a contractible summand of $\cal{C}_u$.  But the the only $\vec{o}$-degree 0 maps out of $N_1$ are scalar multiples of $e_1$, and all components of the differential in the minimal complex of $\cal C_u$ are of $\vec{o}$-degree 0.  From this it follows that $N_1$ must be in the kernel of the boundary map of the minimal complex of $\cal C_u$.  Now, let $$\iota: P_1 \rightarrow \cal{C}_u$$ be the map which sends $P_1$ to $N_1$ isomorphically via $e_1$.  Since $N_1$ is in the kernel of the boundary map of the minimal complex of $\cal C_u$, $\iota$ is a chain map and defines an element of $\Hom(P_1,\cal{C}_u)$. Let $$z_{N_1}: \cal{C}_u \rightarrow P_1\la1\ra$$ be the map which restricts to multiplication by the loop $z: N_1 \rightarrow P_1\la 1 \ra$ and which is 0 on all other summands of the chain group of the minimal complex of $\cal {C}_u$.  This, too, is a chain map, and composition 
$$z_{N_1} \circ \iota  = z : P_1 \rightarrow P_1 \la 1 \ra$$
is clearly non-zero in the homotopy category; hence $z_{N_1}$ is non-zero in the homotopy category, too.  Thus $\Hom(\cal{C}_u, \mathfrak{P}_1\la1\ra)\neq 0$, contradicting the assumption of $(5)$.

We conclude that if $\Hom(\cal{C}_u, \mathfrak{P}_1 \la k\ra) = 0$ for all $k\neq 0$, then $\cal {C}_u$ must be in the full triangulated subcategory generated by $P_2,\dots,P_n$.  Now an inductive argument using the Hurwitz action implies that if a reflection complex $\cal C_u$ lies in the full triangulated subcategory generated by $P_2,\dots,P_n$, then in fact the reflection $u$ is a divisor of $\gamma' = \s_2\dots \s_n$.  
Thus $\s_1 u$ divides $\s_1 \gamma' = \gamma$, so that $\s_1 u = tu$ is a simple Bessis element, giving $(1)$.  

The equivalence of $(6)$ with $(1)$ is given analogously.
The equivalence of each of $(7)-(10)$ with $(5),(6)$ is obtained directly using the definition of the auto-equivalences $\Psi_t^{\pm 1}, \Psi_u^{\pm 1}$.
\end{proof}

The transitivity Hurwitz action of $B_n$ on $\vec{o}$-spherical collections in the orbit of $(P_1,\dots,P_n)$ has some further important consequences for morphism spaces between reflection complexes.  For example, we have the following lemma, whose proof we leave as an exercise for the interested reader.  (One such proof is by induction on the length of the braid $\b\in B_2 \cong \Z$ needed to obtain $r$ as a component of $\b(t,u)$ under the Hurwitz action.)

\begin{lemma}\label{lem:homs}
Let $t,u$ be $\gamma$-reflections with $tu\in \Be^+$ a Bessis simple element.  Suppose that 
$$
	\Hom(\mathfrak C_t,Y) = \Hom(\mathfrak C_u,Y) = 0.
$$
Then for any $\gamma$-reflection $r$ dividing $tu$,
$$
	\Hom(\mathfrak C_r,Y) = 0.
$$
Similarly, if
$$
	\Hom(Y,\mathfrak C_t) = \Hom(Y,\mathfrak C_u) = 0,
$$
then for any $\gamma$-reflection $r$ dividing $tu$,
$$
	\Hom(Y,\mathfrak C_r) = 0.
$$
\end{lemma}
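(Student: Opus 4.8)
\emph{Overall strategy.} The plan is to isolate a one-step statement and then iterate it along the Hurwitz action of $B_2$. First I would record the combinatorial input from \cite{Bessis}: the simple element $tu$ has reflection length $2$ (it is a product of two $\gamma$-reflections, and since inverses of $\gamma$-reflections are not $\gamma$-reflections we cannot have $t=u^{-1}$), reflection length is additive along the divisibility order, and hence every $\gamma$-reflection $r$ dividing $tu$ has $r^{-1}(tu)\in\Be^+$ again a $\gamma$-reflection; the resulting length-$2$ reflection factorization $(r,r^{-1}(tu))$ of $tu$ lies in the single orbit of $(t,u)$ under the Hurwitz action of $B_2\cong\Z$. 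Writing $\tau_1$ for the generator of $B_2$, the two components of $\tau_1^{k}(t,u)$ are $\gamma$-reflections $a_k,b_k$ with $b_k=a_{k-1}$ and $a_kb_k=tu\in\Be^+$ for all $k\in\Z$, and every $\gamma$-reflection $r$ dividing $tu$ occurs as some $a_k$. It therefore suffices to prove the following one-step statement: if $a,b$ are $\gamma$-reflections with $ab\in\Be^+$ and $\Hom(\mathfrak{C}_a,Y)=\Hom(\mathfrak{C}_b,Y)=0$, then $\Hom(\mathfrak{C}_{aba^{-1}},Y)=\Hom(\mathfrak{C}_{b^{-1}ab},Y)=0$, together with the evident dual for $\Hom(Y,-)$. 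Granting this, an induction on $|k|$ — base case $k=0$, $(a_0,b_0)=(t,u)$, being the hypothesis, and the inductive step using $\tau_1^{\pm 1}(a_k,b_k)=(a_kb_ka_k^{-1},a_k)$ and $(b_k,b_k^{-1}a_kb_k)$ together with $a_kb_k=tu\in\Be^+$ to maintain that both components of $\tau_1^k(t,u)$ are killed by $\Hom(\mathfrak{C}_\bullet,Y)$ — yields the lemma.

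\emph{The one-step statement.} The collection $\{W\in\D:\Hom(W,Y[m])=0\text{ for all }m\}$ is a thick subcategory of $\D$, and $\Hom(\mathfrak{C}_r,Y)=0$ says exactly that $\cal C_r$ lies in it (recall $\mathfrak{C}_r=\bigoplus_k\cal C_r[k]$, so homological shifts are harmless). Since $ab\in\Be^+$, Proposition \ref{prop:equiv}(7) gives $\cal C_{aba^{-1}}\cong\Psi_a(\cal C_b)\in\D_0$ and Proposition \ref{prop:equiv}(8) gives $\cal C_{b^{-1}ab}\cong\Psi_b^{-1}(\cal C_a)\in\D_0$. Now $\Sig_a$ is defined (Section \ref{sec:functors}) by tensoring with a two-term complex of bimodules, so for any object $X$ there is a distinguished triangle — up to rotation and shift — whose three terms are $X$, a finite direct sum of homological and internal shifts of $\cal C_a$, and $\Sig_a(X)$: for $a$ a standard generator $\s_j$ this is immediate since $\cal C_a=P_j$ and $e_j\A\otimes_\A M\cong\Hom_\A(P_j,M)$ term by term, and the general case follows by conjugating this triangle by $\Psi_g$, where $a=g\s_jg^{-1}$. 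Taking $X=\cal C_b$ and applying $\Hom(-,Y[m])$: the two terms $\cal C_b$ and the shifts of $\cal C_a$ contribute $0$ for every $m$ by hypothesis, so the long exact sequence forces $\Hom(\Sig_a(\cal C_b),Y[m])=0$ for all $m$, i.e. $\Hom(\mathfrak{C}_{aba^{-1}},Y)=0$. The vanishing $\Hom(\mathfrak{C}_{b^{-1}ab},Y)=0$ follows identically from the analogous triangle for $\Sig_b^{-1}$ applied to $\cal C_a$. The second half of the lemma is obtained verbatim with $\Hom(Y,-)$ in place of $\Hom(-,Y)$, using that $\{W:\Hom(Y,W[m])=0\text{ for all }m\}$ is likewise thick.

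\emph{Expected obstacle.} I anticipate the difficulty being organisational rather than conceptual. The one genuinely delicate point is making precise that the middle term of the twist triangle is exactly a sum of shifts of $\cal C_a$, and keeping track of the homological and internal grading shifts when one passes from a standard generator to a general $\gamma$-reflection by conjugation — this is neutralised by the observation that the entire argument applies $\Hom(-,Y[m])$ for all $m$ simultaneously, so all such shifts are harmless. The other point requiring care is pinning down exactly which features of the dual braid monoid of $F_n$ from \cite{Bessis} are invoked in the first step, namely additivity of reflection length along the divisibility order together with transitivity of the Hurwitz action of $B_2$ on the minimal reflection factorizations of the simple element $tu$.
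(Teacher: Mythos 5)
Your proposal is correct and follows exactly the strategy the paper itself indicates: the paper leaves this lemma as an exercise but explicitly suggests induction on the length of the braid in $B_2\cong\Z$ carrying $(t,u)$ to a factorization with $r$ as a component under the Hurwitz action, which is precisely your reduction to the one-step statement plus the twist triangle. Your write-up supplies the details (the thick-subcategory observation, the identification of the middle term of the twist triangle with shifts of $\cal C_a$ via conjugation, and the Bessis-monoid input) that the paper omits.
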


\subsection{Dual ping pong}
In this section we define the sets required for dual ping pong, as formulated in Lemma \ref{lem:dual}.  We remind the reader that
in this section $\la \bullet \ra$ denotes a shift in the $\vec{o}$-grading, while $\D_k$, $\D_{\geq k}, \D_{\leq k}$, etc. denote the subcategories of $\D$ defined in Section \ref{sec:slice} using the $\vec{o}$-baric structure. 

For $w\in \Be^+$ a simple Bessis element, set
$$
	X_w = \{Y\in \D_{\geq 0} : \Hom(Y,\mathfrak{C}_t) = 0 \iff t \text{ divides } w\}
$$
We now show that the $X_w$ satisfy the requirements for dual ping pong, as formulated in Lemma \ref{lem:dualpingpong}.

\begin{proposition} \label{prop:dual}
The sets $\{X_w\}_{w \in \Be^+}$ are all non-empty and pairwise disjoint.  For $u$ a $\gamma$-reflection and $w\in \Be^+ $, 
$$
	\Psi_u(X_w) \subset X_{lf(uw)}.
$$
\end{proposition}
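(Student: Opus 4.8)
The plan is to pass to the adjoint and then read off hom‑vanishing from the description of $\Psi_u^{\pm1}$ on reflection complexes provided by Proposition~\ref{prop:equiv}. Throughout I take $u$ (and the test reflections $t$) to divide $\gamma$, so that the reflection complexes $\cal C_u,\cal C_t$ are available and $\Psi_u=\Phi_{\cal C_u}$ is the corresponding spherical twist; this is the case relevant to the dual ping pong Lemma~\ref{lem:dualpingpong}. First observe that $\Psi_u$ preserves $\D_{\geq 0}$, since $u$ lies in the positive monoid $F_n^+$ and the non‑negative part of the $\vec{o}$‑baric structure is stable under $F_n^+$ (see the discussion of the $\vec{o}$‑grading in the introduction, or \cite{Licata}); hence $\Psi_u(Y)\in\D_{\geq 0}$ for $Y\in X_w\subseteq\D_{\geq 0}$. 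So, fixing $Y\in X_w$ and a $\gamma$‑reflection $t$, the task is to decide when $\Hom(\Psi_u(Y),\mathfrak C_t)=0$; by adjunction this equals $\Hom(Y,\Psi_u^{-1}(\mathfrak C_t))$, and since $\Psi_u^{-1}$ commutes with the homological shift it suffices to understand the object $\Psi_u^{-1}(\cal C_t)$.

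I would then split into cases according to the position of $t$ relative to $u$ in the lattice $\Be^+$. If $tu\in\Be^+$, Proposition~\ref{prop:equiv}(8) gives $\Psi_u^{-1}(\cal C_t)\cong\cal C_{u^{-1}tu}\in\D_0$, so $\Psi_u^{-1}(\mathfrak C_t)=\mathfrak C_{u^{-1}tu}$ and $\Hom(\Psi_u(Y),\mathfrak C_t)=0$ precisely when $u^{-1}tu$ divides $w$. If $tu\notin\Be^+$ but $ut\in\Be^+$, then Proposition~\ref{prop:equiv}(10) (with $t$ and $u$ interchanged) shows $\Psi_u^{-1}(\cal C_t)\in\D_{[-1,0]}$ with bottom $\vec{o}$‑baric slice a sum of copies of $\cal C_u\la-1\ra$ and top slice $\doteq\cal C_t$; since $Y\in\D_{\geq 0}$, baric property~(b) forces $\Hom(Y,-)$ to vanish on the degree $-1$ slice, so $\Hom(Y,\Psi_u^{-1}(\mathfrak C_t))\cong\Hom(Y,\mathfrak C_t)$, which vanishes precisely when $t$ divides $w$. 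In each of these two cases one checks, using only the combinatorics of $\Be^+$ (Bessis \cite{Bessis}), that the divisibility condition obtained matches ``$t$ divides $lf(uw)=\gcd(uw,\gamma)$''.

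The remaining case --- neither $tu$ nor $ut$ in $\Be^+$ (the ``crossing'' case) --- is where the main work lies, since Proposition~\ref{prop:equiv} does not apply directly. Here I would argue, by induction along the Hurwitz $B_n$‑action on $\vec{o}$‑spherical collections (Lemma~\ref{lem:o-Hurwitz2}, Proposition~\ref{prop:o-Hurwitz}) refining Proposition~\ref{prop:equiv} in the manner of Lemma~\ref{lem:homs}, that the $\vec{o}$‑baric slices of $\Psi_u^{-1}(\cal C_t)$ are direct sums of reflection complexes $\cal C_r\la\epsilon\ra$ with $\epsilon\in\{-1,0\}$ and $r$ running over a controlled family of $\gamma$‑reflections; the slices in degree $-1$ contribute nothing to $\Hom(Y,-)$ by baric property~(b), and via Lemma~\ref{lem:homs} one reduces the vanishing of $\Hom(Y,\Psi_u^{-1}(\mathfrak C_t))$ to a divisibility condition on $w$ that again coincides with ``$t$ divides $lf(uw)$''. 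Combining the three cases yields $\Psi_u(X_w)\subseteq X_{lf(uw)}$. I expect this crossing case --- and in particular pinning down exactly which reflection complexes occur in the baric filtration of $\Psi_u^{-1}(\cal C_t)$, together with the matching combinatorial description of $lf(uw)$ in $\Be^+$ --- to be the principal obstacle.

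For the other two assertions: pairwise disjointness is immediate, since if $Y\in X_w\cap X_{w'}$ then the set of $\gamma$‑reflections $t$ with $\Hom(Y,\mathfrak C_t)=0$ equals both $\{t:t\text{ divides }w\}$ and $\{t:t\text{ divides }w'\}$, and a simple Bessis element is determined by the set of $\gamma$‑reflections dividing it (a structural feature of the lattice $\Be^+$; see \cite{Bessis}), so $w=w'$. For non‑emptiness, $\bigoplus_{i=1}^n P_i\in X_1$: the $P_i$ generate $\D$ as a triangulated category, so $\Hom^\bullet(\bigoplus_i P_i,\cal C_t)\neq 0$ for every nonzero reflection complex $\cal C_t$, whence $\Hom(\bigoplus_i P_i,\mathfrak C_t)\neq 0$ for all $t$, while no $\gamma$‑reflection divides $1$. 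For a general $w\in\Be^+$ I would induct on reflection length: writing $w=w''r$ with $r$ a $\gamma$‑reflection and $w''\in\Be^+$ of strictly smaller reflection length (from a maximal chain in $[1,w]$), and setting $u=w''r(w'')^{-1}$ --- again a $\gamma$‑reflection, by iterating the implication $(1)\Rightarrow(2)$ of Proposition~\ref{prop:equiv} along a maximal chain of $[1,w'']$ --- one has $uw''=w''r=w$, hence $lf(uw'')=lf(w)=w$, and therefore $X_w\supseteq\Psi_u(X_{w''})\neq\emptyset$ by the inclusion already established.
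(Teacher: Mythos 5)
Your treatment of disjointness matches the paper's, and your non-emptiness argument is a legitimate alternative to the paper's (which instead exhibits the explicit object $Y_w=\bigoplus_{x\mid w^\vee}\cal C_x$ and applies Proposition \ref{prop:equiv}); your induction $X_w\supseteq \Psi_u(X_{w''})$ starting from $\bigoplus_i P_i\in X_1$ is fine logically, since the main inclusion does not presuppose non-emptiness. The problem is the main inclusion itself: your case decomposition by the position of $t$ relative to $u$ leaves the entire burden on the ``crossing case,'' and there you only sketch a plan (Hurwitz induction to show that the $\vec{o}$-baric slices of $\Psi_u^{-1}(\cal C_t)$ are sums of reflection complexes in degrees $-1,0$, plus an unproved combinatorial matching with $lf(uw)$). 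That is not a detail: it is the hard content of your approach, and note also that $t=u$ falls into your crossing case (since $u^2\notin \Be^+$), so even the single most important reflection is not covered by your two explicit cases.

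The paper's proof is organized so that the crossing case never has to be computed. Writing $lf(uw)=ua'$ with $a'$ dividing $w$, it verifies $\Hom(\Psi_uY,\mathfrak C_t)=0$ only for $t=u$ (a pure degree-shift computation, $\Psi_u^{-1}\mathfrak C_u\cong\mathfrak C_u\la-1\ra$) and for $t$ dividing $a'$ (where $ut$ is automatically simple, so Proposition \ref{prop:equiv}(10) applies exactly as in your second case); it then invokes Lemma \ref{lem:homs} to propagate vanishing to \emph{every} reflection dividing $ua'=lf(uw)$, crossing ones included. For the converse, it takes $a''$ to be the lcm of the vanishing set, writes $a''=ua'''$, and tests only reflections $t$ dividing $a'''$ --- for which $ut$ divides $ua'''=a''\in\Be^+$ and is therefore simple, so again only the easy case is needed. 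You cite Lemma \ref{lem:homs}, but you deploy it inside the crossing case rather than as the device that eliminates it; reorganizing your argument around the factorization $lf(uw)=ua'$ and the lcm-closure of the vanishing set would close the gap without any new computation.
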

\begin{proof}
We first show the sets $X_w$ are all non-empty.  For $w\neq \gamma \in \Be^+$, write $\gamma = ({w^\vee})w$, and let 
$$
	Y_w = \bigoplus_{x \text{ a }\gamma \text{-reflection dividing } w^\vee} \cal C_x.
$$
A basic property of the Bessis monoid $F_n^+$ is that, for $t$ a $\gamma$-reflection, $xt\in \Be^+$ for all $x$ dividing $w^\vee$ if and only if $t$ divides $w$.  Thus it follows from from Proposition \ref{prop:equiv} that
$\Hom(Y_w, \mathfrak{C}_t) = 0$ if and only if $xt\in \Be^+$ if and only if $t$ divides $w$; this shows that $Y_w\in X_w$, so that $X_w\neq \emptyset$.  Since $\D_{>0}\subset X_\gamma$, the set $X_\gamma$ is clearly non-empty, too.

The fact that $X_w \cap X_y = \emptyset$ when $w\neq y$ is clear from the definition, since if $Y \in X_w$, then $w$ is the least common multiple of the set
$\{t \mid \Hom(Y,\mathfrak{C}_t) = 0\}$, and thus $Y$ determines $w$.

We now show that $\Psi_u(X_w) \subset X_{lf(uw)}.$  
Let $a=lf(uw)$, and write $uw = a b = u a' b$, with $a'b=w$.
Let $Y\in X_w$.  We must now show that $\Psi_uY\in X_a$, that is, we must show that
$$
	\Hom(\Psi_uY, \mathfrak{C}_t)=0 \iff t \text{ divides } a.
$$
Suppose first that $t$ divides $a$.  One possibility is that $t=u$, in which case
$$
	\Hom(\Psi_uY,\mathfrak{C}_u) \cong \Hom(Y, \Psi_u^{-1} \mathfrak{C}_u) \cong \Hom(Y,\mathfrak{C}_u\la -1 \ra).
$$
As $Y\in \D_{\geq 0}$, the summands of $\mathfrak{C}_u\la-1\ra $ are in $\D_{<0}$, and 
$$\Hom(\D_{\geq 0},\D_{<0})=0,$$
we have that $\Hom(Y,\mathfrak{C}_u\la -1 \ra) = 0$.

Next, suppose that $t$ divides $a'$.  Since $t$ divides $w$ and $Y\in X_w$, we know that
$$
	\Hom(Y, \mathfrak{C}_t) = 0.
$$
Also, as $ua'\in \Be^+$ is a simple Bessis element and $ut$ divides $ua'$, we know that $ut\in \Be^+$, too.  Now, by Proposition \ref{prop:equiv} (10), we have that
$\Psi_u^{-1} \cal C_t$ is isomorphic to a mapping cone of a direct sum of $\cal C_u\la-1\ra$'s mapping to $\cal C_t$.
Since the $\cal C_u\la-1\ra$ terms of $\Psi_u^{-1} \cal C_t$ live in $\D_{< 0}$, it follows that for $Z\in \D_{\geq 0}$, 
$$
	\text{if } \Hom(Z,\mathfrak{C}_t) = 0 \text{ then } \Hom(Z,\Psi_u^{-1} \mathfrak{C}_t) = 0.
$$
Taking $Z = Y$, we then have 
$$
	\Hom(Y,\mathfrak{C}_t) = 0 \implies  \Hom(\Psi_u Y, \mathfrak{C}_t) \cong \Hom(Y, \Psi_u^{-1}\mathfrak{C}_t) = 0.
$$ 
Thus $\Hom(\Psi_uY, \mathfrak{C}_t) = 0 $ for $t=u$ and also for $t$ dividing $a'$.   Now Lemma \ref{lem:homs} shows that $\Hom(\Psi_uY, \mathfrak{C}_t) = 0 $ for any $t$ dividing $ua'=a = lf(uw)$.

We now show the converse statement:
$$
	\Hom(\Psi_uY,\mathfrak{C}_t) = 0 \implies t \text{ divides }a.
$$
Let $a''$ be the least common multiple of $\{t \mid \Hom(\Psi_uY, \mathfrak{C}_t) = 0\}$.  Since $u$ is in this set, $u$ divides $a''$, and we may write
$
	a'' = u a'''
$
for a simple Bessis element $a'''$.  If we show that $a'''$ divides $w$, then since $u a''' = a''$, it will follow that $a''$ divides $a = lf(uw)$.  

To see that $a'''$ divides $w$, suppose that $t$ is a $\gamma$-reflection dividing $a'''$.  We claim that $\Hom(Y, \mathfrak{C}_t)=0$.
As before, we know that
$$
	\text{ if } \Hom(Y, \Psi_u^{-1}\mathfrak{C}_t)=0 \text{ then } \Hom(Y, \mathfrak{C}_t) = 0.
$$
Since 
$$
	\Hom(\Psi_uY, \mathfrak{C}_t)\cong \Hom(Y, \Psi_u^{-1}\mathfrak{C}_t)=0,
$$
and $Y\in X_w$, we conclude then $t$ divides $w$.  Since $t$ was an arbitrary factor of $a'''$, it follows that $a'''$ divides $w$, and $a''=ua'''$ divides $a=lf(uw)$, as desired.
\end{proof}

\begin{remark}
The statements Proposition \ref{prop:dual} above together with the dual ping pong Lemma \ref{lem:dualpingpong} together imply the faithfulness of the $F_n$ action on $\D$,  so it might appear as though we have given a second independent proof of Theorem \ref{thm:main}.
However, in preparation for Proposition \ref{prop:dual}, we used Theorem \ref{thm:main} to prove that the Hurwitz action of $B_n$ on $\vec{o}$-spherical collections is free. Nevertheless, one can give a proof of Proposition \ref{prop:dual} which does not use Theorem \ref{thm:main} (by, for example, proving the freeness of the Hurwitz action on $\vec{o}$-spherical collections directly) and so in principle one can prove Theorem \ref{thm:main} using either ping pong or dual ping pong.  We give a dual ping pong faithfulness proof in setting of categorical actions of ADE spherical Artin-Tits groups in \cite{LicataQueffelec}.
\end{remark}

%%%%%%%%%%%%%%%%%%%%%%%%%%%%%%%%%%%%%%%%%%%%%%%%%%%%%%%%%%%%%%%%%%%%%
\section{Metrics on $F_n$ from homological algebra}\label{sec:metrics}
Let $o$ be an orientation of $Q$, and $\{\D_k\}$ the associated $o$-baric structure of $\D$, as defined in Section \ref{sec:slice}. For any auto-equivalence $\b$ of $\D$ we may define a closed interval 
$$[\phi_-(\b),\phi_+(\b)]\subset \Z$$ 
as the smallest interval such that 
\begin{equation}\label{eq:interval}
\Psi_\b(\D_0) \subset \D_{[\phi_-(\b),\phi_+(\b)]}.
\end{equation}
There is no a priori reason to consider only $o$-baric structure on $\D$ for such a definition, and one could (and should) consider instead $t$-structures, co-$t$ structures,  Bridgeland slicings or co-slicings, etc.  In favourable situations, the length of this interval will give a metric on the auto-equivalence group of $\D$.  It would be interesting to study the resulting metrics on the free group coming from categorical structures on $\D$ more systematically, and we certainly have not done that here.  However, the ping pong and dual ping pong constructions considered in Sections \ref{sec:pingpong} and \ref{sec:dualpingpong} allow us to at least give a combinatorial description of the metrics arising from the $\widetilde{o}$- and $\vec{o}$-baric structures on $\D$.  We collect the relevant statements in two theorems below.  The proofs of these theorems are very similar to the proofs of Propositions \ref{prop:pingpong} and \ref{prop:dual}, and as a result we omit the details.  (In fact, the easiest way to prove these statements is via an examination of the ping pong and dual ping pong games constructed in Propositions \ref{prop:pingpong} and \ref{prop:dual}.  We also refer also \cite{LicataQueffelec},\cite{Licata}, where we prove analogous metric statements for spherical and right-angled Artin-Tits groups.)

For the first statement, we consider the symmetric orientation $\widetilde{o}$, which was used in Section \ref{sec:pingpong} for our construction of ping pong and proof of Theorem \ref{thm:main}.  Thus the integers $\phi_+(\b)$ and $\phi_-(\b)$ are determined by the interval in (\ref{eq:interval}), where $\D_k$ are given by the $\widetilde{o}$-baric structure on $\D$.

\begin{theorem}\label{thm:metric1}
For $n\geq 2$ and all $\b \in F_n$, $\phi_-(\b)\leq 0$ and $\phi_+(\b) \geq 0$.  Furthermore, 
\begin{itemize}
\item $\phi_+(\b)$ is equal to the number of positive generators from the set $\{\s_1,\dots,\s_n\}$ in a reduced expression 
$$
	\b = \sigma_{i_1}^{\epsilon_1} \dots \sigma_{i_r}^{\epsilon_r}, \ \ \epsilon_i\in \pm 1;
$$ 
\item  $-\phi_-(\b)$ is equal to the number of negative generators from the set $\{\s_1^{-1},\dots,\s_n^{-1}\}$ in a reduced expression 
$$
	\b = \sigma_{i_1}^{\epsilon_1} \dots \sigma_{i_r}^{\epsilon_r},  \ \ \epsilon_i\in \pm 1;
$$ 
\item the $\widetilde{o}$-baric spread $l_{\widetilde{o}}(\b) = \phi_+(\b) - \phi_-(\b)$ is equal to the word length of $\beta$ in the standard generators $\{\sigma_i^{\pm1}\}$ of $F_n$.
\end{itemize}
\end{theorem}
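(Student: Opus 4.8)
The plan is to refine the bookkeeping already present in the proof of Proposition~\ref{prop:pingpong}, carrying the integers $\phi_{\pm}$ explicitly through the ping pong game. First I would reduce to the generating objects $P_1,\dots,P_n$. Every object of $\D_0$ is an iterated cone of homological shifts of the $P_j$ (via its brutal filtration), and for a baric structure the cone of a morphism $A\to B$ with $A\in\D_{[a,b]}$, $B\in\D_{[c,d]}$ lies in $\D_{[\min(a,c),\,\max(b,d)]}$; hence
$$
	\phi_+(\b)=\max_{1\leq j\leq n}\phi_+\big(\Psi_\b(P_j)\big),\qquad
	\phi_-(\b)=\min_{1\leq j\leq n}\phi_-\big(\Psi_\b(P_j)\big).
$$

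Second, the upper bounds. From the explicit $\widetilde{o}$-graded complexes $\Sig_i(P_j)$ and $\Sig_i^{-1}(P_j)$ recorded in Section~\ref{sec:functors}, together with the fact that the minimal complex of $\Sig_i(Y)$ is a summand of the naive complex $\Sig_i\otimes_\A Y$ and so has no larger $\widetilde{o}$-baric support, one checks that $\Sig_i$ carries $\D_{[a,b]}$ into $\D_{[a,b+1]}$ and $\Sig_i^{-1}$ carries $\D_{[a,b]}$ into $\D_{[a-1,b]}$. Writing $\b=\s_{i_1}^{\epsilon_1}\cdots\s_{i_r}^{\epsilon_r}$ in reduced form, with $p$ of the signs $\epsilon_k$ equal to $+1$ and $q$ equal to $-1$, and starting from $P_j\in\D_0=\D_{[0,0]}$, we get $\Psi_\b(P_j)\in\D_{[-q,\,p]}$ for every $j$; hence $\phi_+(\b)\leq p$ and $\phi_-(\b)\geq -q$.

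Third, and this is where the work lies and where the hypothesis $n\geq 2$ is used, I would establish the matching lower bounds by running the ping pong game of Proposition~\ref{prop:pingpong}. Pick $j\neq i_r$ (possible since $n\geq 2$) and feed the point $z_j=P_j$ through the game, applying $\Sig_{i_r}^{\epsilon_r},\Sig_{i_{r-1}}^{\epsilon_{r-1}},\dots,\Sig_{i_1}^{\epsilon_1}$ in turn. Reducedness of the word guarantees that we never apply $\Sig_i$ to an object of $X_i^-$ nor $\Sig_i^{-1}$ to an object of $X_i^+$, so that (by parts (1),(3),(4),(5),(6) of Proposition~\ref{prop:pingpong}) every intermediate object lies in one of the sets $X_k^{\pm}$. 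Now re-examine the proofs of parts (3)--(6) while keeping track of $\phi_{\pm}$: applying a positive generator $\Sig_i$ to an object of some $X_k^{\pm}$ occurring in the game raises $\phi_+$ by exactly $1$ and leaves $\phi_-$ unchanged — the last point being precisely the observation in the proof of part (5) that the bottom $\widetilde{o}$-baric slice of such an object contains no summand isomorphic to a shift of $P_i$ — and, by the symmetric parts (4),(6), a negative generator $\Sig_i^{-1}$ lowers $\phi_-$ by exactly $1$ and leaves $\phi_+$ unchanged. Iterating, $\phi_+(\Psi_\b(P_j))=p$ and $\phi_-(\Psi_\b(P_j))=-q$. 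Combined with the first two steps this gives $\phi_+(\b)=p\geq 0$, $\phi_-(\b)=-q\leq 0$, and $l_{\widetilde{o}}(\b)=\phi_+(\b)-\phi_-(\b)=p+q$, the standard word length of $\b$ (the integers $p$ and $q$ being well defined functions of $\b$ since reduced words in $F_n$ are unique).

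The main obstacle is the third step. One must verify that each object produced during the game genuinely lies in a set $X_k^{\pm}$ with the asserted slice-shifting behaviour, which amounts to re-running the somewhat involved case analysis of Proposition~\ref{prop:pingpong} with the extra data of the integers $\phi_{\pm}$ attached, and in particular checking carefully that reducedness rules out the degenerate situation in which $\Sig_i$ is applied to a complex whose bottom $\widetilde{o}$-baric slice is a sum of shifts of $P_i$ — the case in which $\phi_-$ would jump and the identity $l_{\widetilde{o}}(\b)=p+q$ would fail (as indeed it does when $n=1$).
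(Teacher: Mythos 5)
Your proposal is correct and follows exactly the route the paper intends: the paper omits the details of Theorem \ref{thm:metric1} precisely because, as it says, the statement is obtained "via an examination of the ping pong game constructed in Proposition \ref{prop:pingpong}," which is what you carry out — upper bounds from the explicit $\widetilde{o}$-graded complexes $\Sig_i^{\pm1}(P_j)$, and matching lower bounds by tracking $\phi_{\pm}$ through the game starting from $P_j$ with $j\neq i_r$. Your write-up is in fact more detailed than the paper's, and correctly isolates the key point that reducedness prevents $\Sig_i$ from ever being applied to an object whose bottom slice is a sum of shifts of $P_i$.
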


We now replace the symmetric orientation $\widetilde{o}$ on $Q$ by the ordered orientation $\vec{o}$ on $Q$; the $\vec{o}$-grading on $\A$ was used in Section \ref{sec:dualpingpong} for dual ping pong.  Thus in the following theorem, the integers 
$\phi_+(\b)$ and $\phi_-(\b)$ are defined by the interval in (\ref{eq:interval}), where now $\D_k$ are given by the $\vec{o}$-baric structure on $\D$.
\begin{theorem}\label{thm:metric2}
For any $\b\in F_n$, we have the following:
\begin{itemize}
\item If $\phi_+(\b) \geq 0$, then $\phi_+(\b)$ is equal to the number of positive simple Bessis elements $w\in \B e^+$ in any minimal length expression 
$$
	\b = w_{i_1}^{\epsilon_1} \dots w_{i_r}^{\epsilon_r},  \ \ \epsilon_i\in \pm 1
$$ 
for $\b$ as a word in the generating set $(\Be^+)^{\pm 1}$.
\item If $\phi_+(\b) \leq 0$, then $\b\in (F_n^+)^{-1}$ is in the negative Bessis monoid.  Moverover, in this case $-\phi_+(\b)$ is equal to the number of $\gamma^{-1}$'s in any minimal length expression 
$$
	\b = w_{i_1}^{\epsilon_1} \dots w_{i_r}^{\epsilon_r},  \ \ \epsilon_i\in \pm 1
$$ 
for $\b$ as a word in the generating set $(\Be^+)^{\pm 1}$.
\item  If $\phi_-(\b) \leq 0$, then $-\phi_-(\b)$ is equal to the number of negative simple Bessis elements $w^{-1}$ in any minimal length expression 
$$
	\b = w_{i_1}^{\epsilon_1} \dots w_{i_r}^{\epsilon_r},  \ \ \epsilon_i\in \pm 1
$$ 
for $\b$ as a word in the generating set $(\Be^+)^{\pm 1}$.
\item If $\phi_-(\b) \geq 0$, then $\b\in F_n^+$ is in the positive Bessis monoid.  Moreover, in this case $\phi_-(\b)$ is equal to the number of $\gamma$'s in any minimal length expression 
$$
	\b = w_{i_1}^{\epsilon_1} \dots w_{i_r}^{\epsilon_r},  \ \ \epsilon_i\in \pm 1
$$ 
for $\b$ as a word in the generating set $(\Be^+)^{\pm 1}$.
\item Let 
$$
	\phi_+^*(\b) = max\{\phi_+(\b),0\} \text{ and } \phi_-^*(\b) = min\{\phi_-(\b),0\},
$$
so that $\phi_+^*(\b) \geq 0$ and $\phi_-^*(\b)\leq 0$.
The $\vec{o}$-baric spread 
$$
	l_{\vec{o}}(\b) = \phi^*_+(\b) - \phi^*_-(\b)
$$ 
is equal to the word length of $\beta$ in the generating set $(\Be^+)^{\pm1}$ of simple Bessis elements and their inverses.
\end{itemize}
\end{theorem}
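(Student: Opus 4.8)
The last bullet follows from the first four together with the telescoping estimate below, so I sketch how to obtain all five simultaneously. \textbf{The generators and the lower bound.} The first step is to record the $\vec{o}$-baric spread of each generator on the baric heart $\D_0$: for every proper simple $w\in\Be^+\setminus\{\gamma\}$ one has $\Psi_w(\D_0)\subseteq\D_{[0,1]}$ and, dually, $\Psi_{w^{-1}}(\D_0)\subseteq\D_{[-1,0]}$, while $\Psi_\gamma(\D_0)\subseteq\D_1$ and $\Psi_{\gamma^{-1}}(\D_0)\subseteq\D_{-1}$. The assertions for $\gamma^{\pm1}$ and for $\gamma$-reflections follow from the example computations of Section~\ref{sec:functors} (together with the inclusion $\D_{>0}\subset X_\gamma$ used in the proof of Proposition~\ref{prop:dual}); for an arbitrary proper simple $w$ one reduces to these by writing $\gamma=ww^\vee$ with $w^\vee\in\Be^+$, using $\Psi_w=\Psi_\gamma\circ\Psi_{w^\vee}^{-1}$, and inducting on reflection length via the Hurwitz action on $\vec{o}$-spherical collections and Proposition~\ref{prop:equiv}. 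Since $\la k\ra$ is central for the bimodule functors $\Psi_\b$, we have $\D_k=\D_0\la k\ra$ and $\Psi_\b(\D_k)=\D_{[\phi_-(\b)+k,\,\phi_+(\b)+k]}$; telescoping along an arbitrary $(\Be^+)^{\pm1}$-expression $\b=w_1^{\epsilon_1}\cdots w_r^{\epsilon_r}$ therefore yields
\[
\phi_+(\b)\leq \#\{i:\epsilon_i=+1\}-\#\{i:w_i=\gamma,\ \epsilon_i=-1\},\qquad \phi_-(\b)\geq \#\{i:w_i=\gamma,\ \epsilon_i=+1\}-\#\{i:\epsilon_i=-1\}.
\]
In particular $\#\{i:\epsilon_i=+1\}\geq\phi_+^*(\b)$ and $\#\{i:\epsilon_i=-1\}\geq-\phi_-^*(\b)$ for \emph{every} expression, so the $(\Be^+)^{\pm1}$-length of $\b$ is at least $\phi_+^*(\b)-\phi_-^*(\b)$.

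\textbf{A geodesic word and the upper bound.} For the reverse inequality I would produce one expression realizing the bound. Write the quasi-Garside (Bessis) normal form $\b=\gamma^{m}s_1\cdots s_q$, with $m=\inf(\b)$, $q=\sup(\b)-\inf(\b)$, and each $s_i$ a proper simple. If $m\geq0$ this word already has length $m+q=\sup(\b)$. If $m=-p<0$ one uses the quasi-centrality of $\gamma$ in $F_n^+$ and the identity $\gamma^{-1}s=(s^{-1}\gamma)^{-1}$ — whose right-hand side is a single generator because $s^{-1}\gamma\in\Be^+$ — to absorb $\min(p,q)$ of the factors $\gamma^{-1}$ into leading simples, producing a word of length $\max(p,q)$. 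A ping pong argument modelled on Proposition~\ref{prop:dual}, but played with the translated pockets $\Psi_{\gamma^{m}}(X_w)$ so as to also record the net power of $\gamma$, identifies the top and bottom $\vec{o}$-baric slices of $\Psi_\b$ applied to a suitable test object and so computes $\phi_+(\b)$ and $\phi_-(\b)$ exactly; one checks this equals $\sup(\b)$ and $\inf(\b)$, so the constructed word has length $\phi_+^*(\b)-\phi_-^*(\b)$ and is geodesic. Feeding these values back into the two displayed inequalities forces $\#\{i:\epsilon_i=+1\}=\phi_+^*(\b)$ and $\#\{i:\epsilon_i=-1\}=-\phi_-^*(\b)$ in \emph{any} minimal-length expression, and separating the positive letters equal to $\gamma$ (respectively the negative letters equal to $\gamma^{-1}$) according to the sign of $\phi_+(\b)$ (respectively $\phi_-(\b)$) yields the first four bullets.

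\textbf{The main obstacle.} Both hard points concern the interplay of the simple Bessis generators with powers of $\gamma$. The first is establishing that a proper simple $w$ has baric spread exactly $[0,1]$ rather than merely $\subseteq[0,\ell(w)]$, which a naive composition of reflection twists would give; this genuinely requires the lattice and Hurwitz combinatorics of simple elements and Proposition~\ref{prop:equiv}. The second is reconciling the bookkeeping of the dual ping pong game of Proposition~\ref{prop:dual}, which tracks a single simple Bessis element via $lf$ on the half-infinite tower $\D_{\geq0}$, with the bi-infinite $\gamma$-shift — in particular with the behaviour of $\Psi_\b$ when its $\vec{o}$-baric window straddles degree $0$. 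It is exactly this straddling that makes the clamped quantities $\phi_\pm^*$ the correct invariants (a letter $\gamma$ has zero baric \emph{spread} but must still be counted in the word metric), and arranging for the clamping to fall out of the ping pong game cleanly is the delicate step.
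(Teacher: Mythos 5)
Your sketch is correct and follows exactly the route the paper itself indicates: the paper omits the details of Theorem \ref{thm:metric2}, saying only that it follows by examining the dual ping pong game of Proposition \ref{prop:dual}, and your argument is precisely that examination, supplemented by the standard quasi-Garside bookkeeping (generator spreads $\Psi_w(\D_0)\subset\D_{[0,1]}$, $\Psi_{\gamma^{\pm1}}(\D_0)\subset\D_{\pm1}$, subadditivity of $\phi_\pm$, and geodesics of length $\max(p,q)$ from the normal form $\gamma^{-p}s_1\cdots s_q$). The delicate points you flag — that proper simples have spread exactly $[0,1]$, and that the sharp values $\phi_\pm(\b)=\sup(\b),\inf(\b)$ are certified by the ping pong pockets — are real, but they are exactly the points the paper also leaves implicit, so your proposal is at the same level of completeness as the source.
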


Thus Theorems \ref{thm:metric1} and \ref{thm:metric2} give homological constructions of the standard and dual word-length metrics on $F_n$.

\subsection{The exotic metric on $F_n$}

We close by defining a distinguished homological metric on the free group arising from the faithful 2-representation $\Psi$.
Recall that the triangulated category $\D$ has a distinguished $t$-structure $\{\D^k\}$ whose heart $\D^0$ is the abelian category of linear complexes of graded projective $\A$-modules, where {\it here the grading on $\A$ we consider is the grading by path-length} (see Section \ref{sec:gradings}).  Just as for the baric structures considered in the previous section, this $t$-structure gives rise to a metric on $F_n$, and a basic problem is to describe it combinatorially.    

In more detail, for $\b\in F_n$, define $[\phi_-(\b),\phi_+(\b)]\subset \Z$ to be the smallest interval such that 
$$
	\Psi_\b(\D^0) \subset \D^{[\phi_-(\b),\phi_+(\b)]}.
$$
It is not difficult to see that for $n\geq 2$ and any $\b\in F_n$, $-\infty < \psi_-(\b) \leq 0$ and $0 \leq \phi_+(\b) < \infty$.
Using the injectivity of the 2-representation $\Psi$, one can also show the following.
\begin{proposition}
The function $d:F_n\times F_n \longrightarrow \Z_{\geq 0}$ defined by
$$
	d_{exotic}(\alpha,\b) = \phi_+(\b^{-1}\alpha) -  \phi_-(\b^{-1}\alpha) 
$$
is a (left-invariant) metric on $F_n$, $n\geq 2$.
\end{proposition}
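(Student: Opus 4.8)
The plan is to verify the three metric axioms together with left-invariance, in each case reducing to elementary properties of the integer-valued functions $\phi_\pm$ and to four already-available inputs: $\Psi_g\Psi_h\cong\Psi_{gh}$ (Proposition \ref{prop:2-rep}); the bound $\phi_-(\b)\le 0\le\phi_+(\b)$ noted just above; the faithfulness of $\Psi$ (Corollary \ref{cor:faithful}); and the characterization, recalled in the introduction, that $g\in F_n$ preserves $\D^{\ge 0}$ exactly when $g$ lies in the standard positive monoid $M^+$ (the submonoid generated by $\s_1,\dots,\s_n$). Left-invariance is immediate since $(\eta\b)^{-1}(\eta\alpha)=\b^{-1}\alpha$. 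For the triangle inequality I would first record the subadditivity
$$
	\phi_+(gh)\le\phi_+(g)+\phi_+(h),\qquad \phi_-(gh)\ge\phi_-(g)+\phi_-(h).
$$
Indeed $\Psi_h(\D^0)\subseteq\D^{[\phi_-(h),\phi_+(h)]}$, and since $\Psi_g$ is a triangulated autoequivalence commuting with $[1]$ one has $\Psi_g(\D^m)=\Psi_g(\D^0)[-m]\subseteq\D^{[\phi_-(g)+m,\phi_+(g)+m]}$; as the subcategories $\D^{[a,b]}$ are closed under the dévissage triangles of property (c) of Section \ref{sec:slice}, it follows that $\Psi_g(\D^{[a,b]})\subseteq\D^{[a+\phi_-(g),b+\phi_+(g)]}$ and hence $\Psi_{gh}(\D^0)\subseteq\D^{[\phi_-(g)+\phi_-(h),\phi_+(g)+\phi_+(h)]}$. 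Writing $\eta^{-1}\alpha=(\eta^{-1}\b)(\b^{-1}\alpha)$ and subtracting the two displayed inequalities then gives $d_{exotic}(\alpha,\eta)\le d_{exotic}(\alpha,\b)+d_{exotic}(\b,\eta)$.

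For symmetry it is enough to show $\phi_+(g)-\phi_-(g)=\phi_+(g^{-1})-\phi_-(g^{-1})$. The propagation argument just used upgrades $\Psi_g(\D^0)\subseteq\D^{\le\phi_+(g)}$ to $\Psi_g(\D^{\le 0})\subseteq\D^{\le\phi_+(g)}$; taking right orthogonals inside the bounded $t$-structure, and using $(\D^{\le b})^\perp=\D^{\ge b+1}$ together with the fact that an equivalence satisfies $(\Psi_g\mathcal C)^\perp=\Psi_g(\mathcal C^\perp)$, this turns into $\Psi_{g^{-1}}(\D^{\ge 0})\subseteq\D^{\ge-\phi_+(g)}$, i.e.\ $\phi_-(g^{-1})\ge-\phi_+(g)$. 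The mirror computation with left orthogonals gives $\phi_+(g^{-1})\le-\phi_-(g)$, so $\phi_+(g^{-1})-\phi_-(g^{-1})\le\phi_+(g)-\phi_-(g)$; interchanging $g$ and $g^{-1}$ yields the reverse inequality, hence equality. Combined with $\phi_-\le 0\le\phi_+$, a byproduct is that $\phi_+(g)=\phi_-(g)=0$ forces $\phi_+(g^{-1})=\phi_-(g^{-1})=0$.

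For non-degeneracy, $d_{exotic}(\alpha,\alpha)=0$ because $\Psi_1\cong 1_\D$; conversely, if $d_{exotic}(\alpha,\b)=0$ set $g=\b^{-1}\alpha$, so $\phi_+(g)=\phi_-(g)$, which with $\phi_-(g)\le 0\le\phi_+(g)$ forces $\phi_\pm(g)=0$ and hence, by the byproduct, $\phi_\pm(g^{-1})=0$; thus $\Psi_g(\D^0)\subseteq\D^0$ and $\Psi_{g^{-1}}(\D^0)\subseteq\D^0$. Each of these gives $\Psi_g(\D^m)\subseteq\D^m$ for all $m$ (apply $[-m]$) and hence, again by property (c), $\Psi_g(\D^{\ge 0})\subseteq\D^{\ge 0}$, and likewise for $g^{-1}$; by the cited characterization $g\in M^+$ and $g^{-1}\in M^+$, and since $M^+$, being the free monoid on $\s_1,\dots,\s_n$, has trivial group of units, $g=1$ and $\alpha=\b$. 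The step I expect to need the most care is the symmetry computation — one must be scrupulous about the conventions for $\D^{\le k}$ and $\D^{\ge k}$ and check that $\Psi_g(\D^0)\subseteq\D^{\le\phi_+(g)}$ genuinely propagates to $\D^{\le 0}$ — whereas the only place Theorem \ref{thm:main} enters is the positive-monoid characterization invoked in this last step.
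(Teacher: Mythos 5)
Your handling of left-invariance, the triangle inequality, and symmetry is correct, and in fact more careful than the paper, which dismisses all three as immediate from the definitions: the subadditivity of $\phi_+$ and superadditivity of $\phi_-$ via d\'evissage, and the orthogonality argument giving $\phi_+(g^{-1})\leq -\phi_-(g)$ and $\phi_-(g^{-1})\geq -\phi_+(g)$, are exactly the right way to make "immediate" precise (modulo being careful, as you note, about which side the orthogonal is taken on under the paper's convention that $\Hom(\D^k,\D^l)=0$ for $k>l$).

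The gap is in non-degeneracy, which is precisely the step the paper identifies as "the non-trivial point." Your reduction correctly lands at the statement that $\Psi_g(\D^0)\subset\D^0$ and $\Psi_{g^{-1}}(\D^0)\subset\D^0$ must force $g=1$, but you then discharge this by citing the characterization "$g$ preserves $\D^{\geq 0}$ if and only if $g$ lies in the standard positive monoid" for the path-length $t$-structure. That characterization is only \emph{asserted} in the paper's introduction; it is never proved in the body, and its hard direction (preservation of $\D^{\geq 0}$ implies positivity) is of essentially the same depth as the non-degeneracy you are trying to establish — indeed, your argument shows the two are more or less equivalent. So the crux has been outsourced rather than proved. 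The paper instead argues directly: faithfulness (Theorem \ref{thm:main}) implies that for $\b\neq 1$ the complex of $(\A,\A)$-bimodules $\Psi_\b$ is not homotopy equivalent to $\A$, and an examination of the chain groups of the minimal complexes of the top and bottom slices of $\Psi_\b$ then produces some $P_j$ whose image $\Psi_\b(P_j)$ has a $t$-slice in $\D^{>0}$ or in $\D^{<0}$, so that $\Psi_\b(\oplus_j P_j)\notin\D^0$. To close your proof you would need to supply this (or an equivalent) analysis of the bimodule complexes; faithfulness alone, without the minimal-complex argument, does not yield the positive-monoid characterization you invoke.
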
\label{prop:metric3}
The triangle inequality and the symmetry of $d_{exotic}$ in the arguments $\alpha$ and $\beta$ both follow immediately from the definitions; the non-trivial point is to prove that $$\text{ if } d_{exotic}(\alpha,\b) = 0 \text{ then } \alpha = \b.$$  
Thus, to prove Proposition \ref{prop:metric3}, one must show that
\begin{equation}\label{eq:strongfaithfulness}
	\beta(\D^0) \subset \D^0 \iff \beta = 1.
\end{equation}
This is in principle a slightly stronger condition than the faithfulness of the action of $F_n$ on $\D$.  However, the faithfulness of the action of $F_n$ on $\D$ implies that for $\b\neq 1$, the complex of $(\A,\A)$ bimodules defining $\Psi_\b$ is not homotopic to the $(\A,\A)$ bimodule $\A$. Now, by examining the chain groups of the (minimal complexes of) the top and bottom slices of $\Psi_\b$, one can show that there is either a projective module $P_j$ such that the top $t$-slice of $\Psi_\b(P_j)$ is in $\D^{>0}$ or a projective module $P_j$ such that the bottom $t$-slice of $\Psi_\b(P_j)$ is in $\D^{<0}$.  From this it follows that 
$$
	\Psi_\b (\oplus_{j=1}^n P_j) \in \D^0 \iff \b = 1,
$$
which implies statement $(2)$.

We refer to the metric $d_{exotic}$ as the {\it exotic metric} on $F_n$, and pose the following problem.

\begin{problem}\label{prob:exotic}
Give a combinatorial description of the exotic metric on $F_n$.
\end{problem}

If $\beta$ is an element of the standard positive monoid, so that a minimal expression for $\beta$ in the generators $\{\sigma_i^{\pm1}\}$ uses only positive generators $\{\s_i\}$, then one can show that $d(\beta,1)$ is equal to the length of $\beta$ with respect to the generating set of canonical positive lifts from the associated universal Coxeter group 
$$
	W_n  = F_n/\la \s_i^2 \ra.
$$
We denote by $d_{Cox}$ the word-length metric on $F_n$ whose length one elements are these positive lifts and their inverses.
Thus, for $n=2$, the free group elements $\alpha$ with $d_{Cox}(\alpha) = 1$ are
$$
	\s_1,\ \s_2,\ \s_1\s_2,\ \s_2\s_1,\ \s_1\s_2\s_1,\ \s_2\s_1\s_2,\ \s_1\s_2\s_1\s_2,\dots
$$
and their inverses.  In fact, when $n=2$ one can show that for all $\alpha, \beta\in F_2$, 
$$
	d_{exotic}(\alpha,\beta) = d_{Cox}(\alpha,\beta).
$$ 
This  solves Problem \ref{prob:exotic} when $n=2$.  

When $n>2$, the situation appears to be more subtle.  While we always have 
$$
	d_{exotic}(\alpha,\beta) \leq d_{Cox}(\alpha,\beta),$$
typically we will not have equality (though we will have equality when $\beta^{-1}\alpha$ is in the standard positive monoid.)  
For example, in $F_3$, if
$$
\alpha = \s_2\s_1 \text{ and } \b = \s_1\s_3\s_1^{-1},
$$
then $d_{Cox}(\alpha,\beta)= 3$, but $d_{exotic}(\alpha,\beta) = 2$.  In fact we have no reason to expect that $d_{exotic}$ is a word-length metric when $n>2$.

\bibliographystyle{amsplain}
\bibliography{free}

\end{document}